\documentclass[a4paper]{amsart}
\pdfoutput=1 

\usepackage{amssymb}

\usepackage{tikz}

\usepackage[backend = biber,
    style = authoryear,
    giveninits = true,  
    dashed = false,     
    uniquename = false,  
    maxcitenames = 2  
]{biblatex}
\DeclareCiteCommand{\cite}[\mkbibemph]
  {\usebibmacro{prenote}}%
  {\usebibmacro{citeindex}%
   \usebibmacro{cite}}
  {\multicitedelim}
  {\usebibmacro{postnote}}

\AtBeginBibliography{\small}  

\usepackage{tabularx,ragged2e} 
\usepackage{hyperref}  
\usepackage{graphicx}  
\usepackage[font=footnotesize]{caption}   %
\usepackage{mathtools} 
\usepackage{longtable} 
\newtheorem{thm}{Theorem}

\newcommand{\stirlingsecond}[2]{%
  \genfrac{\{}{\}}{0pt}{}{#1}{#2}}
\DeclareMathOperator{\atantwo}{atan2}
\DeclareMathOperator{\sgn}{sgn}
\DeclareMathOperator{\prt}{prt}

\title{Point-to-ellipse series -- Part II\\ Expansion about the center}
\author{\vspace{-2mm}John-Olof Nilsson\vspace{-2mm}}
\address{John-Olof Nilsson is with Avioniq Sweden AB, e-mail: john\_nil@hotmail.com}
\date{}

\begin{document}

\begin{abstract}
Power series expansions in the polar distance about the center are derived for the normal direction and signed distance from an ellipse to points near its center. After extracting constant terms and common factors, the series coefficients are polynomials in the axis ratio and the absolute value of the sine of the polar angle, with rational coefficients. The region of convergence is shown to be the region bounded by the ellipse evolute. The expansions complement previously derived expansions valid for points far from the center.
\end{abstract}

\maketitle

\vspace{-6mm}

\section{Introduction}

\begin{center}
\includegraphics[]{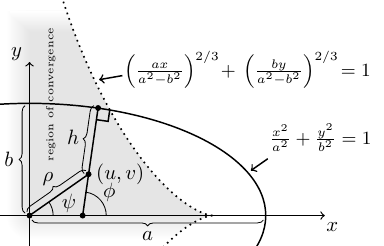}
\end{center}

As illustrated above, let $(u,v)$ be the Cartesian coordinates of a point. Let $a$ and $b$, with $0<b<a$, denote the semi-major and semi-minor axes of an ellipse centered at the origin and aligned with the coordinate axes.
Let $h$ denote the shortest signed distance from the ellipse to $(u,v)$, measured along the outward normal passing through $(u,v)$,
 with $h>0$ outside and $h<0$ inside,
and let $\phi\in(-\pi,\pi]$ denote the angle that the normal makes with the positive $x$-axis. 
Further, let $\rho=(u^2+v^2)^{1/2}$ and $\psi=\atantwo(v,u)\in(-\pi,\pi]$ be the polar coordinates of $(u,v)$.

Series of $\phi$, $h$, and the normal $(\cos(\phi),\sin(\phi))$ in powers of $\rho^{-1}$ and $\sin(\psi)$, valid for points sufficiently far from the ellipse center, were derived in:
\vspace{2mm}

\fullcite{Nilsson2025}
\vspace{2mm}

\noindent
See this Part I for an introduction to the subject and a discussion about the utility of such series. In this Part II, analogous series in $\rho$ are derived for points near the ellipse center. More specifically, the region of convergence is shown to be the region bounded by the evolute, except along the coordinate axes.
To the author's knowledge, both the present series and the study of point-to-ellipse expansions about the ellipse center are new.

The main results, i.e. the series and the region of convergence, are stated in the next Section~\ref{sec:the_series} and discussed in Section~\ref{sec:discussion}. Series and region of convergence derivations and tabulated coefficients are found in Appendices~\ref{app:A}--\ref{sec::tabulated_coefficients} and implementations are found at \url{https://github.com/jnil02/point2ellipse_series}.

\section{The series}\label{sec:the_series}
Consider the geometric setup defined in the introduction. On the major-axis segment where the nearest point is nonunique, select the upper nearest point, and at $\rho=0$ let $\psi=0$. Let $\sgn(\cdot)$ denote the sign function with the convention $\sgn(0)=1$, and $\prt(k)=k\bmod 2$ the parity function.
Further, define the eccentricity $e = (1-b^2/a^2)^{1/2}$, the axis ratio $\epsilon = (1-e^2)^{1/2}=b/a$, the polar radius of the evolute in the direction $\psi$
\begin{equation*}
\rho_c= \frac{ae^2}
{\left(|\cos(\psi)|^{2/3}+\epsilon^{2/3}|\sin(\psi)|^{2/3}\right)^{3/2}}
\end{equation*}
and the polar distance scaled with the evolute major axis extent $\sigma=\rho/(ae^2)$. Finally, at $\psi=\pm\pi/2$, the normalized angular quantities containing $1/\cos(\psi)$ are defined by their limits.

\begin{thm}\label{theorem}
The point-to-ellipse quantities $\phi$, $h$, $\cos(\phi)$, and $\sin(\phi)$ admit the following series expansions in $\sigma$, for each fixed $\psi$, with constant terms and common factors extracted from the series,
\begin{align*}
\frac{\phi-\sgn(\psi)\pi/2}{\sgn(\psi)\cos(\psi)}
\!&=\!
\sum_{k=1}^{\infty}\!
\Bigg(\!
\sum_{l=0}^{\left\lfloor\mkern-2mu\frac{k-1}{2}\mkern-2mu\right\rfloor}\!\!\mkern-1mu
\Bigg(\sum_{n=0}^{\left\lfloor\mkern-2mu\frac{k-1}{2}\mkern-2mu\right\rfloor}\!\!d^\phi_{k,l,n}\epsilon^{2n}\!
\Bigg)
|\sin(\psi)|^{2l}\!
\Bigg)\epsilon^{1\mkern-1mu+\mkern-1mu\prt(k\mkern-1mu+\mkern-1mu1)}
|\sin(\psi)|^{\prt(k\mkern-1mu+\mkern-1mu1)}
\sigma^k\\
\frac{h\!\mkern-1mu+\mkern-1mu\!b\!-\!\mkern-1mu\rho|\sin(\psi)|}{a}
\!&=\!
\sum_{k=2}^{\infty}\!
\Bigg(\!
\sum_{l=0}^{\lfloor k/2\rfloor}
\Bigg(
\sum_{n=0}^{\lfloor k/2\rfloor}
d^{h}_{k,l,n}
\epsilon^{2n}\!
\Bigg)
|\sin(\psi)|^{2l}\!
\Bigg)
\epsilon^{1+\prt(k)}
|\sin(\psi)|^{\prt(k)}
\sigma^k\\
\frac{\cos(\phi)}{\cos(\psi)}
\!&=\!
\sum_{k=1}^{\infty}\!
\Bigg(\!\mkern-2mu
\sum_{l=0}^{\left\lfloor\mkern-2mu\frac{k-1}{2}\mkern-2mu\right\rfloor}
\!\Bigg(\hspace{-8mm}
\sum_{\hspace{8.5mm}n=\prt(k+1)}^{\lfloor k/2\rfloor}
\hspace{-8mm}d^{\cos}_{k,l,n}
\epsilon^{2n}\!
\Bigg)
|\sin(\psi)|^{2l}\!
\Bigg)
\epsilon^{\prt(k)}
|\sin(\psi)|^{\prt(k+1)}
\sigma^k\\
\frac{\sin(\phi)\!-\mkern-2mu\sgn(\psi)}{\sgn(\psi)}
\!&=\!
\sum_{k=2}^{\infty}\!
\Bigg(\!
\sum_{l=0}^{\lfloor k/2\rfloor}
\Bigg(
\sum_{n=1}^{\lfloor k/2\rfloor}
d^{\sin}_{k,l,n}
\epsilon^{2n}\!
\Bigg)
|\sin(\psi)|^{2l}\!
\Bigg)
\epsilon^{\prt(k)}
|\sin(\psi)|^{\prt(k)}
\sigma^k
\end{align*}
The coefficients $d^{*}_{k,l,n}$ are rational and are given, respectively, by \eqref{eq:d_phi}, \eqref{eq:d_h}, \eqref{eq:d_cos} and \eqref{eq:d_sin}. The series converge absolutely and represent the corresponding closest-point quantities, with the selection convention above on the major axis, for
\begin{equation*}
\rho\leq\rho_c
\end{equation*}
except at $\psi=\pm\pi/2$, $\rho=\rho_c$, where the two series normalized by $\cos(\psi)$ diverge. The series for $\phi$ and $\cos(\phi)$ themselves, obtained by restoring the angular factors term by term and the constant term where applicable, converge absolutely and represent the closest-point quantities throughout $\rho\leq\rho_c$, including these two points.
\end{thm}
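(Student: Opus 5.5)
Near the center, the closest point for $v>0$ lies near the top vertex $(0,b)$, so $\phi$ is near $\pi/2$. Write the foot point as $(a\cos t, b\sin t)$. The normal condition is
\begin{equation*}
a e^2\cos t\sin t = a u \sin t - b v\cos t .
\end{equation*}
I would set $t=\pi/2-\tau$, assume $\psi\in[0,\pi/2]$ without loss of generality, and use symmetry to recover the other quadrants together with the $\sgn(\psi)$ and $|\sin\psi|$ factors. Dividing by $ae^2$ and writing $u=\rho\cos\psi$, $v=\rho\sin\psi$ gives
\begin{equation*}
\sin\tau\cos\tau = \sigma\big(\cos\psi\cos\tau - \epsilon\sin\psi\sin\tau\big).
\end{equation*}
This is an analytic equation $F(\tau,\sigma)=0$ with $F_\tau(0,0)=1$. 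By the analytic implicit function theorem (or Lagrange inversion) there is a unique analytic root $\tau(\sigma)$ with $\tau(0)=0$.

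\textbf{Step 1: the coefficients.} Lagrange–Bürmann inversion gives $\tau$, and then $\tan\phi=\tan t/\epsilon$, $h$, $\cos\phi$ and $\sin\phi$, as power series in $\sigma$. Their coefficients are polynomials in $\cos\psi$, $\epsilon\sin\psi$ and $\epsilon$. Two further facts are needed:
\begin{itemize}
\item a parity argument: $\tau$ is odd under $\cos\psi\to-\cos\psi$, and $\sigma\to-\sigma$ combined with the reflection $\sin\psi\to-\sin\psi$ gives the $\prt(k)$ patterns;
\item $\cos^2\psi=1-\sin^2\psi$, which lets me factor out $\cos\psi$ in the normalized $\phi$ and $\cos\phi$ series.
\end{itemize}
Together these give the stated shapes and the degree bounds in $l$ and $n$. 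Rationality is immediate from the Lagrange formula. Extracting the constant terms, e.g. $h\approx -b+\rho\sin\psi+\dots$, is routine. This step presumably yields the formulas \eqref{eq:d_phi}--\eqref{eq:d_sin}, which I take as definitions.

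\textbf{Step 2: radius of convergence.} For fixed $\psi$, the series for $\tau(\sigma)$ is a power series in a complex variable $\sigma$. Its radius is the distance to the nearest singularity of the algebraic function branch. Algebraic singularities occur where $F=F_\tau=0$, i.e. where the root $\tau$ coalesces with another root. Geometrically this is the point $(u,v)$ lying on the evolute, where two normals merge. Eliminating $\tau$ from $F=F_\tau=0$ gives the classical evolute parametrization $u=ae^2\cos^3 t$, $v=-(ae^2/\epsilon)\dots$, suitably scaled. This yields the real singularity at $\sigma=\rho_c/(ae^2)$.

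The main obstacle is showing that no complex singularity of this branch has smaller modulus, and that the branch continued along $[0,\rho_c)$ stays the closest-point branch. For the first point I would write $s=\tan(\tau/2)$ or $w=e^{i\tau}$, turning $F$ into a quartic polynomial in $w$. I would then check the discriminant's roots in $\sigma$: they are $\pm\rho_c$ for the two real coalescences and a pair of complex ones. For the complex ones I would verify, by a direct modulus computation from the parametrization, that their modulus equals or exceeds $\rho_c$. Pringsheim's theorem on the sign pattern of coefficients could serve as a shortcut if positivity of $d^\tau$ coefficients in a suitable variable can be shown. For the second point, the closest point depends continuously on $(u,v)$ inside the evolute, where it is unique off the major-axis segment. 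By uniqueness of the implicit branch, the series represents it.

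\textbf{Step 3: the boundary.} On $\rho=\rho_c$ the singularity is a square-root type branch point, where $\tau\sim c_0-c_1\sqrt{\sigma_c-\sigma}$. The coefficients therefore decay like $k^{-3/2}\sigma_c^{-k}$ (Darboux's method), giving absolute convergence on the circle. At $\psi=\pm\pi/2$ the evolute point is the cusp $(0,\pm ae^2/\epsilon)$, where the coalescence is of higher order. I expect the singular exponent to change so that $\tau/\cos\psi$ has coefficients of size $k^{-1/2}$ or larger, and hence diverges. The actual $\phi$ and $\cos\phi$ series, multiplied by $\cos\psi=0$, converge trivially there. Restoring the factors term by term then yields the final claim by continuity (Abel's theorem).
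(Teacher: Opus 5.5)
\textbf{Step 1 does not prove the coefficient formulas.} The theorem asserts that the Taylor coefficients \emph{are} the explicit rational expressions \eqref{eq:d_phi}, \eqref{eq:d_h}, \eqref{eq:d_cos}, \eqref{eq:d_sin}. Saying you ``take them as definitions'' leaves exactly this identity unproved.

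Symmetry in your variables can give the \emph{shape} of the expansions, but not the formulas. Even for the shape, the two parities you list do not fix the $\epsilon$-parity or the degree bounds. You also need something like the fact that $\tau$ depends only on $\sigma\cos\psi$ and $\sigma\epsilon\sin\psi$.

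The paper gets the formulas only through a specific chain:
\begin{itemize}
\item Lagrange--B\"urmann in the variables $x=\sigma\cos\psi$, $y=1/(\tan\phi-\tan\psi)$;
\item an alternating Vandermonde collapse;
\item potential and partial Bell polynomial bookkeeping;
\item for $h$, a reduction of the infinite $e^{2m}$-series of $\epsilon^2N/a$ to finitely many powers of $\epsilon$, via random-walk moments and a Cauchy--Euler operator.
\end{itemize}
Your parametric-angle route would yield different expressions. For $h$ they might even be simpler, since $\epsilon^2N/a=\epsilon\sqrt{\cos^2\tau+\epsilon^2\sin^2\tau}$. You would still have to prove they equal the stated $d^{*}_{k,l,n}$.

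\textbf{Step 2 puts the singularity in the wrong place.} You place the limiting singularity at $\sigma=+\rho_c/(ae^2)$ on the physical ray. For $0<\psi<\pi/2$ this is wrong. The normals that merge at a first-quadrant evolute point come from the fourth-quadrant arc, so the closest-point branch is analytic there. The branch with $\tau(0)=0$ coalesces instead at $\sigma=-\rho_c/(ae^2)$, the point reflected through the center, where it is a non-closest (upper-left) normal. Your square-root expansion and transfer argument belong there, and at both $\pm\rho_c/(ae^2)$ when $\psi=0$.

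\textbf{Step 2 also omits the singularities of the maps out of $\tau$.} This is the more serious convergence gap. You analyse only coalescence singularities of $\tau$. But $\phi-\pi/2=-\arctan(\epsilon\tan\tau)$, and $\cos\phi$, $\sin\phi$ and $h$ all contain $\sqrt{\cos^2\tau+\epsilon^2\sin^2\tau}$. These branch where $\tan\tau=\pm i/\epsilon$. Your own equation shows such points correspond to $|\sigma|=1/e$, i.e.\ $\rho=ae$. That lies strictly inside the evolute near the minor axis whenever $e>\epsilon$. So unless you show the selected branch never reaches $\tan\tau=\pm i/\epsilon$ for $|\sigma|\le\rho_c/(ae^2)$, the radius for $\phi$, $\cos\phi$, $\sin\phi$ and $h$ could be $ae$ rather than $\rho_c$. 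The paper's argument-principle step, using the polynomial $P(t,x)$ and the contour $|t|=(\epsilon^2/z)^{1/3}$, is exactly this missing ingredient.

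\textbf{Step 3 does not prove divergence at $\psi=\pm\pi/2$.} You only expect divergence there. The inference from coefficients of size $k^{-1/2}$ to divergence is invalid: with the singularity on the negative axis, the terms at $\rho=\rho_c$ alternate and could converge conditionally. What you need to show is:
\begin{itemize}
\item The endpoint coefficients are those of $-\epsilon\sigma/(1+\epsilon\sigma)$. In your variables this is easy: $\tau/\cos\psi$ at $\cos\psi=0$ solves $\theta=\sigma(1-\epsilon\theta)$.
\item This function has a pole at $\sigma=-1/\epsilon$, so at $\sigma=1/\epsilon$ the terms are $\pm1$ and do not tend to zero.
\end{itemize}
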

\begin{proof}
The derivation is straightforward but lengthy and is therefore given in Appendices~\ref{app:A}--\ref{sec:region_of_convergence}.
The main steps of the derivation are as follows:
\begin{enumerate}
\item In the open first quadrant, from suitably transformed basic geometric relations and the \emph{Lagrange-B\"{u}rmann formula}, series expansions are derived for $\phi - \tfrac{\pi}{2}$. The full $\phi$ series then follows by symmetry. 
\item From the series expansions of $\phi - \tfrac{\pi}{2}$, series expansions for powers $(\phi - \tfrac{\pi}{2})^{i}$ and power series of the form $\sum_iA_i(\phi - \tfrac{\pi}{2})^{2i}$ are derived.
\item The expansions of the power series are, in turn, used to derive series expansions for $\sin(\phi)$, $\cos(\phi)$, and $\sin(\psi)/\sin(\phi)$.
\item A lengthy derivation of series expansions for $\epsilon^2(1-e^2\sin^2(\phi))^{-1/2}$ follows.
\item From the derived series, the series expansion for $h$ is directly obtained.
\item The convergence radius of the series for $\phi-\pi/2$ is established by locating the singularities of the selected inverse branch. Boundary convergence and the coordinate-axis cases are then treated separately.
\item Finally, the convergence regions of the remaining series are obtained by examining additional singularities and cancellations under the relevant transformations. Closest-point validity beyond the evolute is checked separately.
\end{enumerate}
\end{proof}
\noindent
Note, along the coordinate axes, some series converge beyond the evolute. On the major axis, these extended sums generally do not represent the closest-point quantities.
On the minor axis, the series for $\phi$, $\cos(\phi)$ and $\sin(\phi)$ reduce to constants and, together with the affine series for $h$, remain valid for all $\rho$. These distinctions are detailed in Appendix~\ref{sec:validity_beyond}.
Further, at $\rho=0$, the conventions $\psi=0$ and $\sgn(0)=1$ select the upper nearest point, and the expansions give $\phi=\pi/2$, $h=-b$, $\cos(\phi)=0$, and $\sin(\phi)=1$. For the remaining points on the major-axis singular segment, i.e. points strictly inside the evolute for which $\psi=0$ or $\psi=\pi$, there are two nearest points. The conventions $\psi\in(-\pi,\pi]$ and $\sgn(0)=1$ select the upper one. The selected nearest-point projection is discontinuous across this segment.
Furthermore, off the coordinate axes, the limiting singularity lies at the reflection of the point through the center, on a non-closest branch. Hence, the closest-point quantities themselves remain analytic across the evolute, and the series terms eventually alternate in sign at physical points near the boundary.

\section{Discussion}\label{sec:discussion}
The presented series provide new analytic representations of the point-to-ellipse relation. The results may be interpreted from several complementary viewpoints:
\begin{itemize}
\item Analytic representations of the nearest-point projection onto an ellipse for points inside the evolute, naturally apart from the singular segment.
\item Series expansions of the relevant root of the quartic point-to-ellipse equation, see the inside-evolute case of~\cite{Vermeille2011}. 
\item Inverse-coordinate transformations linking Cartesian coordinates to geodetic and n-vector coordinates, with $\phi$ corresponding to latitude, $h$ to altitude and $(\cos(\phi),\sin(\phi))$ to the n-vector, respectively.
\end{itemize}
To the author's knowledge, neither these series nor a systematic analysis of expansions about the ellipse center have been previously reported. There are only a handful of publications on series expansions of the point-to-ellipse relation~\cite{
Morrison1961, 
Pick1967, 
Getchell1972, 
Long1975, 
Turner2013, 
Uteshev2018, 
Nilsson2025,
}. (In contrast, the total number of publications on the point-to-ellipse relation numbers in the 100s~\cite{Nilsson2024}.) All series reported in these references demonstrate a singularity at $\rho=0$ and none provides an explicit region of convergence. Except for those previously reported by the author in~\cite{Nilsson2025}, none provides formulas for arbitrary-order coefficients. 

The present series are complementary to those in~\cite{Nilsson2025} in that they have similar structure but different regions of convergence. The series' most immediate strengths are their clear symbolic structure, termwise differentiability, 
and the availability of low-order truncations yielding closed-form approximations.
For highly eccentric ellipses the convergence region is substantial, whereas in the near-circular limit the evolute contracts and the practical relevance for an interior expansion correspondingly diminishes. Thus, the principal value of the present formulation lies in its use for structural analysis and theoretical completeness, together with possible specialized applications requiring smooth local representations inside the evolute.

The series derivations as well as the coefficient formulas are lengthy and a natural open question is whether significantly shorter derivations and formulas exist. Despite cumbersome formulas, the coefficients of the final series for $h$ do not appear more complicated than those of the series leading up to it, see Section~\ref{sec::tabulated_coefficients}. Hence, one could expect that there are simpler formulas and derivations, possibly starting with a series expansion for $h$.

A wider open question is whether analogous expansions exist for the other conics. For parabolas and hyperbolas, the nearest-point relation can also be reduced to cubic or quartic equations, and local implicit expansions are available in principle. However, no counterparts with explicit coefficient structure and geometrically characterized convergence domains appear to be known.



\subsection*{Use of large language models}
Commercial large language models were used during the development of this work, from autumn 2025 to autumn 2026. They assisted in refining and shortening the derivations and contributed two key steps: the moment identification and subsequent factoring in Section~\ref{sec:N_series} used to derive the series for $\epsilon^2N/a$, and the introduction of the polynomial $P(t,x)$ in Section~\ref{sec:singular_points_f} which enabled the use of the argument principle in excluding additional convergence restrictions from the branch points of the inverse tangent.

\section*{References}
\printbibliography[heading=none]

\appendix
\newrefsegment 

\section{Series derivations}\label{app:A}
The derivation starts from the standard normal-coordinate relations for an ellipse
\begin{align}
\label{eq:base1}
\rho\cos(\psi) &= (N + h)\cos(\phi)\\
\label{eq:base2}
\rho\sin(\psi) &= (\epsilon^2 N + h)\sin(\phi)
\end{align}
where $N = a(1-e^2\sin^2(\phi))^{-1/2}$ is the so-called prime vertical radius of curvature.
Further, let $f$ and $g$ be analytic at 0, with $g(0)\neq0$, and let $y(x)$ be the unique analytic solution near $x=0$ of
\begin{equation*}
y(x)=xg(y(x))\quad\text{where}\quad y(0)=0
\end{equation*}
Together with \eqref{eq:base1} and~\eqref{eq:base2}, the following form of the \emph{Lagrange--B\"{u}rmann formula} constitutes the basis of the derivation
\begin{equation}\label{eq:lagrange-burmann}
f(y(x))=f(0) + \sum_{k=1}^\infty\frac{1}{k}\Big([y^{k-1}]f'(y)g(y)^k\Big) x^k
\end{equation}
for sufficiently small $|x|$ where $[y^k]F(y)$ denotes the coefficient of $y^k$ in the Taylor expansion of $F(y)$ about $y=0$.

\subsection{Conventions}
Throughout the derivations
\begin{itemize}
\item The derivations are carried out for $\psi\in(0,\pi/2)$. Symmetry extends the expressions to the open quadrants, continuous angular limits determine the endpoint expressions at $\psi=\pm\pi/2$, and the branch conventions determine the values at $\psi=0$ and $\psi=\pi$. Convergence at these endpoints
is examined separately in Section~\ref{sec:region_of_convergence}.
\item Sparse series expansions, in which every other coefficient vanishes, are used throughout the derivations and have coefficients denoted by $c^{*}_{k,l,n}$. The associated parity constraints are tracked explicitly, and each sparse expansion is reindexed at the end as a dense expansion with coefficients denoted by $d^{*}_{k,l,n}$.
\item For brevity, all congruences between integer indices are taken modulo $2$, unless otherwise stated. In particular, $k\equiv l$ means $k\equiv l \pmod 2$.
\item Generalized binomial coefficients are used throughout and defined by
\begin{equation*}
\binom{\alpha}{m}
=\frac{\alpha(\alpha-1)\cdots(\alpha-m+1)}{m!},
\qquad m\geq1,
\qquad
\binom{\alpha}{0}=1
\end{equation*}
and for integer $m<0$ they are $\binom{\alpha}{m}=0$. In particular, $\binom{n}{m}=0$ for integers $0\leq n<m$, whereas $\binom{-1}{0}=1$, which is used repeatedly below.
\end{itemize}

\subsection{Series expansions for \texorpdfstring{$\phi$}{φ}}\label{app:phi}
Eliminating $h$ from \eqref{eq:base2} and \eqref{eq:base1} and rearranging gives
\begin{equation*}
\frac{1}{\tan(\phi) - \tan(\psi)} = \frac{\rho\cos(\psi)}{ae^2}\frac{(1 - e^2\sin^2(\phi))^{1/2}}{\sin(\phi)}
\end{equation*}
Let
\begin{equation*}
x=\frac{\rho\cos(\psi)}{ae^2}=\sigma\cos(\psi), \quad y=\frac{1}{\tan(\phi) - \tan(\psi)}
\end{equation*}
\begin{equation*}
\text{(for brevity)}\quad z = \tan(\psi)
\end{equation*}
\begin{equation*}
g(y)=\frac{(1 + \epsilon^2(z+1/y)^2)^{1/2}}{z + 1/y}=\frac{(1 - e^2\sin^2(\phi))^{1/2}}{\sin(\phi)}\quad\text{where}\quad g(0)=\epsilon
\end{equation*}
\begin{equation*}
\text{and}\quad f(y)=\tan^{-1}\left(\frac{1}{z +(1+z^2)y}\right)=\phi-\psi + n\pi \quad n\in\mathbb{Z}
\end{equation*}
For $0<\psi<\pi/2$, choose the local analytic branch of $f$ satisfying
\begin{equation*}
f(0)=\frac{\pi}{2}-\psi
\end{equation*}
This means $f(y(x))=\phi(x)-\psi$ near $x=0$. Then, from the \emph{Lagrange-B\"{u}rmann formula}~\eqref{eq:lagrange-burmann}
\begin{equation}\label{eq:base4}
\phi-\psi = f(0) + \sum_{k=1}^\infty \frac{1}{k}\Big([y^{k-1}]f'(y)g(y)^k\Big) x^k
\end{equation}
Now $f(0)=\tan^{-1}(1/z)=\frac{\pi}{2}-\psi$. Therefore, obtaining a series expansion for $\phi$ in $\rho$ and $\psi$ reduces to deriving the coefficients of $f'(y)g(y)^k$.

Differentiating $f(y)$, applying geometric expansion and binomial expansion two times
\begin{align*}
f'(y)&=-\frac{1}{1+2zy+(1+z^2)y^2}\\
&=\sum_{r=0}^{\infty}(-1)^{r+1}\left(2zy+(1+z^2)y^2\right)^r\\
&=\sum_{r=0}^\infty(-1)^{r+1}\sum_{s=0}^r\binom{r}{s}(2zy)^{r-s}\left((1+z^2)y^2\right)^s\\
&=\sum_{r=0}^\infty(-1)^{r+1}\sum_{s=0}^r\binom{r}{s}2^{r-s}\sum_{i=0}^s\binom{s}{i}z^{2i+r-s}y^{r+s}
\end{align*}
Fixing the power $y^q$, where $q\geq 0$, meaning $q=r+s$. Since $0\leq s\leq r$, $0\leq s\leq \lfloor q/2\rfloor$
\begin{equation*}
[y^q]f'(y)=\sum_{j=0}^{\lfloor q/2\rfloor}(-1)^{q-j+1}\binom{q-j}{j}2^{q-2j}\sum_{i=0}^j\binom{j}{i}z^{2i+q-2j}
\end{equation*}
Further, $g(y)$ may be factored as
\begin{align*}
g(y)&=\frac{(1 + \epsilon^2(z+1/y)^2)^{1/2}}{z + 1/y}
=\left(\epsilon^2 + \left(\frac{y}{1+zy}\right)^2\right)^{1/2}
\end{align*}
Taking the $m^{\text{th}}$ power and applying binomial expansion twice
\begin{align*}
g(y)^m
&=\sum_{p=0}^\infty\binom{m/2}{p}\epsilon^{2(m/2-p)}\left(\frac{y}{1+zy}\right)^{2p}\\
&=\sum_{p=0}^\infty\binom{m/2}{p}\epsilon^{2(m/2-p)}\sum_{q=0}^\infty (-1)^q\binom{2p+q-1}{q}z^qy^{2p+q}
\end{align*}
Fixing the power $y^i$, $i\geq 0$, meaning $i=2p+q$, i.e. $0\leq p\leq \lfloor i/2\rfloor$, gives
\begin{equation*}
[y^i]g(y)^m=\sum_{p=0}^{\lfloor i/2\rfloor}\binom{m/2}{p}\epsilon^{2(m/2-p)}(-1)^{i-2p}\binom{i-1}{i-2p}z^{i-2p}
\end{equation*}
Now, series convolution directly gives the desired coefficient for $f'(y)g(y)^m$
\begin{align*}
[y^{m-1}]f'(y)g(y)^m&=\sum_{q=0}^{m-1}\big([y^q]f'(y)\big)\big([y^{m-1-q}]g(y)^m\big)
\end{align*}

To proceed, plug the series expansion into~\eqref{eq:base4}
\begin{align*}
\phi-\tfrac{\pi}{2}=&\sum_{m=1}^\infty\frac{x^m}{m}\sum_{q=0}^{m-1}
\sum_{j=0}^{\lfloor q/2\rfloor}\binom{q-j}{j}2^{q-2j}\sum_{i=0}^j\binom{j}{i}\sum_{p=0}^{\lfloor (m-1-q)/2\rfloor}\\
&\binom{m/2}{p}\epsilon^{m-2p}(-1)^{m-2p-j}\binom{m-1-q-1}{m-1-q-2p}\\
&z^{m-1-2p-2j+2i}\\
\end{align*}
Substituting $z=\tan(\psi)=\sin(\psi)\cos(\psi)^{-1}$ and $x=\sigma\cos(\psi)$
\begin{align*}
\phi-\tfrac{\pi}{2}=&\sum_{m=1}^\infty\frac{\sigma^m}{m}\sum_{q=0}^{m-1}
\sum_{j=0}^{\lfloor q/2\rfloor}\binom{q-j}{j}2^{q-2j}\sum_{i=0}^j\binom{j}{i}\sum_{p=0}^{\lfloor (m-1-q)/2\rfloor}\\
&\binom{m/2}{p}\epsilon^{m-2p}(-1)^{m-2p-j}\binom{m-1-q-1}{m-1-q-2p}\\
&\sin^{m-1-2p-2j+2i}(\psi)\cos^{1+2p+2j-2i}(\psi)
\end{align*}
The resulting formula also remains valid at $\psi=\pi/2$ by continuity, but for notational simplicity we divide by $\cos(\psi)$ at this stage. Further, the remaining even powers of $\cos(\psi)$ are expanded in $\sin(\psi)$ giving
\begin{align*}
\frac{\phi-\tfrac{\pi}{2}}{\cos(\psi)}
=&\sum_{m=1}^\infty\frac{\sigma^m}{m}\sum_{q=0}^{m-1}
\sum_{j=0}^{\lfloor q/2\rfloor}\binom{q-j}{j}2^{q-2j}\sum_{i=0}^j\binom{j}{i}\sum_{p=0}^{\lfloor (m-1-q)/2\rfloor}\\
&\binom{m/2}{p}\epsilon^{m-2p}\binom{m-1-q-1}{m-1-q-2p}\\
&\sum_{r=0}^{p+j-i}(-1)^{r+m-2p-j}\binom{p+j-i}{r}\sin^{m-1-2p-2j+2i+2r}(\psi)
\end{align*}
Note that, none of the factors $\sin(\psi)$, $\sigma$ or $\epsilon$ are dependent on $q$. Further, note that
\begin{itemize}
\item $j\leq \lfloor q/2\rfloor$ implies $q\geq 2j$.
\item $p\leq \lfloor (m-1-q)/2\rfloor$ implies $q\leq m-1-2p$
\item The two preceding items imply $p\leq\lfloor (m-1)/2\rfloor$ and $j\leq\lfloor (m-1-2p)/2\rfloor$.
\end{itemize}
which enables reordering the sums, separating out the $q$ dependency
\begin{align*}
\frac{\phi-\tfrac{\pi}{2}}{\cos(\psi)}&=\sum_{m=1}^\infty\frac{\sigma^m}{m}
\sum_{p=0}^{\lfloor (m-1)/2\rfloor}\binom{m/2}{p}\epsilon^{m-2p}\sum_{j=0}^{\lfloor (m-1-2p)/2\rfloor}\sum_{i=0}^j\binom{j}{i}\\
&\sum_{q=2j}^{m-1-2p}\binom{q-j}{j}2^{q-2j}
\binom{m-1-q-1}{m-1-q-2p}\\
&\sum_{r=0}^{p+j-i}(-1)^{r+m-2p-j}\binom{p+j-i}{r}\sin^{m-1-2p-2j+2i+2r}(\psi)
\end{align*}
Now collect terms in powers of $\sigma^k$ and $\sin^l(\psi)$. Note
\begin{itemize}
\item $2p+2j-2i-2r\geq 2p+2j-2i-2(p+j-i)=0$ meaning $k\geq l+1$
\item $l = m-1-2p-2j+2i+2r$ implying $l\equiv m-1$.
\item The $r$ sum is removed by $r=(l-k+1+2p+2j-2i)/2$, which is an integer since $l\equiv k+1$. The constraint $r\leq p+j-i$ is equivalent to $l\leq k-1$, and $r\geq0$ is equivalent to $i\leq(l-k+1+2p+2j)/2$, which gives the upper limit of the $i$ sum.
\end{itemize}
\begin{align*}
\frac{\phi-\tfrac{\pi}{2}}{\cos(\psi)}&=\sum_{k=1}^\infty\frac{1}{k}\sigma^k\sum_{l=0}^{k-1}\mathbf{1}_{l\equiv k+1}\sin(\psi)^l
\sum_{p=0}^{\lfloor (k-1)/2\rfloor}
\sum_{j=0}^{\lfloor (k-1-2p)/2\rfloor}\binom{k/2}{p}\epsilon^{k-2p}\\
&\sum_{q=2j}^{k-1-2p}\binom{q-j}{j}2^{q-2j}
\binom{k-1-q-1}{k-1-q-2p}
\sum_{i=0}^{\min(j,(l - k +1+2p+2j)/2)}\\&(-1)^{(l+k+1)/2-p+i}\binom{j}{i}\binom{p+j-i}{(l - k +1)/2+p+j-i}
\end{align*}
To get a clean $\epsilon$ power, make the change of index $n=k-2p$ meaning
\begin{itemize}
\item $p=(k-n)/2$
\item $\epsilon^{k-2p}=\epsilon^n$
\item $p\in[0,\lfloor (k-1)/2\rfloor]\Leftrightarrow n\in[1,k],n\equiv k$
\end{itemize}
and exploiting a variant of the alternating \emph{Vandermonde} identity
\begin{equation*}
\sum_{i=0}^{j}(-1)^i
\binom ji
\binom{A+j-i}{B-i}
=
\binom{A}{B}
\end{equation*}
(which follows from $[x^B](1+x)^{A+j}\big(1-\tfrac{x}{1+x}\big)^j=[x^B](1+x)^A$)
with $A=p,\qquad B=\frac{l-k+1}{2}+p+j$, and noting that the sum over $i$ can be extended to $j$.
Combining it all
\begin{equation}
\frac{\phi-\tfrac{\pi}{2}}{\cos(\psi)}
=
\sum^\infty_{k=1}
\sigma^k\sum_{l=0}^{k-1}\sin^l(\psi)\sum_{n=1}^k\epsilon^nc^{\phi}_{k,l,n}\label{eq:phi_sparse_expansion}
\end{equation}
where 
\begin{align*}
c^{\phi}_{k,l,n}&=
\mathbf{1}_{\substack{k\equiv l+1\\ n\equiv k}}\frac{(-1)^{(l+n+1)/2}}{k}\,\binom{\tfrac{k}{2}}{\tfrac{k-n}{2}}
\sum_{j=0}^{\left\lfloor\frac{n-1}{2}\right\rfloor}
\binom{\tfrac{k-n}{2}}{\tfrac{l - n + 1}{2}+j}
\sum_{q=2j}^{n-1}
2^{q-2j}
\binom{\scriptstyle q-j}{\scriptstyle j}
\binom{\scriptstyle k-2-q}{\scriptstyle n-1-q}
\end{align*}

To avoid the parity constraints, i.e. $k\equiv l+1$ and $n\equiv k$, change the summation indices to $l=\prt(k+1)+2l'$ and $n=1+\prt(k+1)+2n'$
giving (with dropped ${}'$),
\begin{equation*}
\frac{\phi-\tfrac{\pi}{2}}{\cos(\psi)}
=
\sum_{k=1}^{\infty}
\sigma^k
\sum_{l=0}^{\left\lfloor\frac{k-1}{2}\right\rfloor}
\sin^{\prt(k+1)+2l}(\psi)
\sum_{n=0}^{\left\lfloor\frac{k-1}{2}\right\rfloor}
\epsilon^{1+\prt(k+1)+2n}
d^\phi_{k,l,n}
\end{equation*}
where
\begin{equation}\label{eq:d_phi}
d^\phi_{k,l,n}
\!=\!
\frac{(-1)^{k+l+n}}{k}
\binom{\scriptstyle k/2}{\scriptstyle \left\lfloor\frac{k-1}{2}\right\rfloor-n}
\!\sum_{j=0}^{n}\!
\binom{\scriptstyle \left\lfloor\frac{k-1}{2}\right\rfloor-n}{\scriptstyle l-n+j}
\hspace{-9mm}
\sum_{q=2j}^{\hspace{8mm}\prt(k+1)+2n}
\hspace{-9mm}2^{q-2j}
\binom{\scriptstyle q-j}{\scriptstyle j}\!
\binom{\scriptstyle k-2-q}{\scriptstyle \prt(k+1)+2n-q}
\end{equation}

Finally, the expansion is derived under the assumption of $\psi\in(0,\pi/2)$. From the reflection symmetry around the axes, this is extended to all $\psi\neq 0$, $\psi\neq \pi$ and $\psi\neq\pm\pi/2$ by
\begin{equation}\label{eq:phi_dense_expansion}
\frac{\phi\!-\!\sgn(\psi)\pi/2}{\sgn(\psi)\cos(\psi)}
\!=\!
\sum_{k=1}^{\infty}\!\Bigg(\!
\sum_{l=0}^{\left\lfloor\mkern-2mu\frac{k-1}{2}\mkern-2mu\right\rfloor}\!\!\Bigg(\!
\sum_{n=0}^{\left\lfloor\mkern-2mu\frac{k-1}{2}\mkern-2mu\right\rfloor}
d^\phi_{k,l,n}
\epsilon^{2n}
\!\Bigg)|\sin(\psi)|^{2l}
\!\Bigg)\epsilon^{1+\prt(k\mkern-1mu+\mkern-1mu1)}|\sin(\psi)|^{\prt(k\mkern-1mu+\mkern-1mu1)}\sigma^k
\end{equation}
%
Each coefficient on the right-hand side has a finite limit at $\psi=\pm\pi/2$. These limits define the endpoint series for the continuously extended normalized angle. Their convergence is examined separately in Section~\ref{sec:region_of_convergence}.
At $\psi=0$ and $\psi=\pi$, the coefficient expressions are also regular. On the open major-axis segment where the nearest point is nonunique, the stated conventions select the upper nearest point, i.e. $\sgn(0)=1$ on the positive axis, and $\psi=\pi$ on the negative axis.

\subsection{Series expansion for \texorpdfstring{$(\phi-\pi/2)^i$}{(φ - π/2) to the power i}}
Rearranging the $l$ and $k$ sums in~\eqref{eq:phi_sparse_expansion} gives
\begin{equation*}
\phi-\tfrac{\pi}{2} = \cos(\psi)\sum_{l=0}^{\infty}\left(
\sum^\infty_{k=l+1}
\left(\sum_{n=1}^kc^{\phi}_{k,l,n}\epsilon^n\right)\sigma^k\right)\sin^l(\psi)
\end{equation*}
The expansion of its $i$th power, for $\psi\in(0,\pi/2)$, is analogous to that of $(\phi-\psi)^i$ in~\cite{Nilsson2025}.
Note, the rearrangements and products below are understood as operations on formal power series in $\sigma$. At each fixed power of $\sigma$, only finitely many terms contribute. Convergence of the resulting series is established separately.
Compared to~\cite{Nilsson2025}, the key structural properties are the same
\begin{itemize}
\item $l$ starts from 0.
\item The lowest values for $n$ and $k$ are 1.
\end{itemize}
These imply that the expansions of $(\phi-\pi/2)^i$ (and $(\phi-\psi)^i$ in \cite{Nilsson2025}) are described by the \emph{ordinary potential polynomials} and the \emph{partial ordinary Bell polynomials}.

In short, write the rearranged series as a power series in $\sin(\psi)$ with coefficients that are themselves power series in $\sigma$,
\begin{equation*}
\phi-\tfrac{\pi}{2}=\cos(\psi)\sum_{l=0}^{\infty}a_l\sin^l(\psi)
\qquad
a_l=\sum_{k=l+1}^{\infty}a_{l,k}\sigma^k
\end{equation*}
where the coefficients $a_{l,k}$ are polynomials in $\epsilon$ and given below. Raising to the power $i$ and collecting powers of $\sin(\psi)$ defines the ordinary potential polynomials
\begin{equation*}
\Bigg(\sum_{l=0}^{\infty}a_l\sin^l(\psi)\Bigg)^{i}=\sum_{l=0}^{\infty}A_{l,i}(a_0,\dots,a_l)\sin^l(\psi)
\end{equation*}
Each $A_{l,i}$ is a sum of monomials $\prod_\ell a_\ell^{m_\ell}$ (see the multinomial form below). Expanding each power $a_\ell^{m_\ell}$ in $\sigma$ defines the partial ordinary Bell polynomials
\begin{equation*}
a_\ell^{m_\ell}=\sum_{\kappa=0}^{\infty}\hat{B}_{\kappa,m_\ell}(a_{\ell,1},\dots,a_{\ell,\kappa-m_\ell+1})\,\sigma^\kappa,
\qquad \hat{B}_{0,0}=1,\quad\hat{B}_{\kappa,0}=0\ (\kappa>0)
\end{equation*}
The coefficient $c^{\phi}_{k,l,n,i}$ below is obtained by substituting these expansions into $A_{l,i}$ and extracting the coefficient of $\epsilon^n\sigma^k$.

However, in contrast to~\cite{Nilsson2025}, the differences are
\begin{itemize}
\item $k$ starts from $l+1$ rather than 1, i.e. $a_{l,k}=0$ for $k<l+1$.
\item The summation limits of $n$ are different.
\end{itemize}
The differences lead to different summation limits from those in~\cite{Nilsson2025} when collecting powers. So expanding
\begin{equation*}
(\phi-\tfrac{\pi}{2})^i = \left(\cos(\psi)\sum_{l=0}^{\infty}\left(
\sum^\infty_{k=l+1}
\left(\sum_{n=1}^kc^{\phi}_{k,l,n}\epsilon^n\right)\sigma^k\right)\sin^l(\psi)\right)^i
\end{equation*}
and temporarily referring to the resulting summation variables with $l'$, $k'$ and $n'$, the summation limits of these variables become
\begin{itemize}
\item The powers $\sin^{l'}(\psi)$ naturally start from 0 and have no upper bound.
\item For a given $i$ and power $\sin^{l'}(\psi)$, since $k$ starts at $l+1$, the lowest power $\sigma^{k'}$ is $l'+i$, and $k'$ has no upper bound.
\item Since $n$ starts from 1, the lowest power $\epsilon^{n'}$ is $i$.
\item Since $n$ is upper bounded by $k$, the highest power $\epsilon^{n'}$ is $k'$. Since $k'\geq i$, this adds no further constraint on $n'$.
\end{itemize}
Altogether, and dropping the primes on the index variables, this means that
\begin{equation}\label{eq:power_series}
(\phi-\pi/2)^i = \cos^i(\psi)\sum_{l=0}^{\infty}\left(
\sum_{k=l+i}^\infty\left(
\sum_{n=i}^kc^{\phi}_{k,l,n,i}
\epsilon^n\right)\sigma^{k}\right)\sin^l(\psi)
\end{equation}
where
\begin{equation*}
c^{\phi}_{k,l,n,i}=[\epsilon^n][\sigma^k]\left(A_ {l,i}(a_0,\dots,a_l)\left[a_\ell^{m_\ell}\rightarrow\!\!\!\sum_{\kappa=m_\ell(\ell+1)}^\infty \!\!\!\hat{B}_{\kappa,m_\ell}(a_{\ell,1},\dots,a_{\ell,\kappa-m_\ell+1})\sigma^\kappa\right]\right)
\end{equation*}
where
\begin{equation*}
a_{l,k} =
\begin{cases}
\sum_{n=1}^k
\epsilon^nc^{\phi}_{k,l,n} & k\equiv l+1 \quad\text{and}\quad k\geq l+1 \\
0 & \text{otherwise}
\end{cases}
\end{equation*}
The $k\equiv l+1$ comes from the common factor $\mathbf{1}_{k\equiv l+1}$ from $c^{\phi}_{k,l,n}$. Note the lower limit $m_\ell(\ell+1)$ (instead of $m_\ell$) of the $\kappa$ sum in $c^\phi_{k,l,n,i}$. From~\eqref{eq:parity_implications} below, $\hat{B}_{\kappa,m_\ell} = [\sigma^\kappa](a_\ell^{m_\ell}) = 0$ for $\kappa < m_\ell(\ell+1)$, so those terms vanish.

The zeros of $a_{l,k}$ provide additional structure (zeros) to $c^{\phi}_{k,l,n,i}$. To see how it propagates through $c^{\phi}_{k,l,n,i}$, it is helpful to write out $\hat{B}_{\kappa,m_\ell}$ in multinomial form
\begin{equation*}
\hat B_{\kappa,m_\ell}(a_{\ell,1},\dots,a_{\ell,\kappa-m_\ell+1})=
\sum_{\substack{\mu_1,\mu_2,\dots\ge 0\\
\sum_j \mu_j=m_\ell\\
\sum_j j\mu_j=\kappa}}
\frac{m_\ell!}{\prod_j \mu_j!}\prod_{j\ge 1} a_{\ell,j}^{\mu_j}
\end{equation*}
For $\hat{B}_{\kappa,m_\ell}\neq 0$, only indices $j$ with $j\equiv \ell+1$ and $j\ge \ell+1$ can contribute. This means $\kappa=\sum_j j\mu_j\equiv m_\ell(\ell+1)$ and $\kappa=\sum_j j\mu_j\geq m_\ell(\ell+1)$
which in turn implies the necessary condition
\begin{equation}\label{eq:parity_implications}
[\sigma^\kappa](a_\ell^{m_\ell})=0 \quad\text{unless}\quad \kappa\ge m_\ell(\ell+1)\quad\text{and}\quad\kappa\equiv m_\ell(\ell+1)
\end{equation}
Similarly, express $A_ {l,i}(a_0,\dots,a_l)$ in multinomial form
\begin{equation*}
A_ {l,i}(a_0,\dots,a_l)
=\sum_{\substack{m_0,m_1,\dots,m_l\ge 0\\
\sum_{\ell=0}^l m_\ell=i\\
\sum_{\ell=0}^l \ell m_\ell=l}}
\frac{i!}{\prod_{\ell=0}^l m_\ell!}
\prod_{\ell=0}^l a_\ell^{m_\ell}
\end{equation*}
Each term is defined by a set $\{m_\ell\}$ and $[\sigma^k](A_ {l,i}(a_0,\dots,a_l))= 0$ unless $k=\sum_{\ell=0}^l k_\ell$ for $[\sigma^{k_\ell}](a_\ell^{m_\ell})$ where each $k_\ell \geq m_\ell(\ell+1)$ and $k_\ell\equiv m_\ell(\ell+1)$.
Then
\begin{equation*}
k=\sum_{\ell=0}^l k_\ell\equiv \sum_{\ell=0}^lm_\ell(\ell+1)=l+i
\end{equation*}
meaning
\begin{equation*}
[\sigma^k](A_ {l,i}(a_0,\dots,a_l))=0\quad\text{unless}\quad k\equiv i+l
\end{equation*}
The lower bounds $k_\ell\geq m_\ell(\ell+1)$ imply a global lower bound on $k$, but since only parity is required here, these bounds are not pursued further.
Further, $c^{\phi}_{k,l,n}$ contains a factor $\mathbf{1}_{n\equiv l+1}$ meaning
\begin{equation*}
[\epsilon^{n_\ell}][\sigma^{k_\ell}](a^{m_\ell}_{\ell}) = 0\quad\text{unless}\quad k_\ell\equiv (\ell+1)m_\ell\quad\text{and}\quad n_\ell\equiv (\ell+1)m_\ell
\end{equation*}
Each term is again defined by the set $\{m_\ell,k_\ell,n_\ell\}$ and will potentially contribute to the coefficients $[\epsilon^n][\sigma^k](A_ {l,i}(a_0,\dots,a_l))$ if $k=\sum_{\ell=0}^l k_\ell$ and $n=\sum_{\ell=0}^l n_\ell$ provided that all factors $[\epsilon^{n_\ell}][\sigma^{k_\ell}](a^{m_\ell}_{\ell})$ are non-zero. This implies
\begin{equation*}
n=\sum_{\ell=0}^l n_\ell\equiv \sum_{\ell=0}^l k_\ell=k\equiv \sum_{\ell=0}^l(\ell+1)m_\ell=i+l
\end{equation*}
Combining the requirements
\begin{equation*}
[\epsilon^n][\sigma^k](A_ {l,i}(a_0,\dots,a_l))=0\quad\text{unless}\quad k\equiv n\equiv i+l
\end{equation*}
and equivalently
\begin{equation}\label{eq:parity_base}
c^{\phi}_{k,l,n,i}=0\quad\text{unless}\quad k\equiv n\equiv i+l
\end{equation}

\subsection{Series expansions for even power series in \texorpdfstring{$\phi-\pi/2$}{φ - π/2}}
Throughout this section, let $0<\psi<\pi/2$ and interpret the following identities as formal power-series identities in $\sigma$. Since $(\phi-\pi/2)^{2i}$ starts at
order $\sigma^{2i}$, each coefficient receives contributions from only finitely many powers. Using~\eqref{eq:power_series}, defining the zeroth power
\begin{equation*}
c^\phi_{k,l,n,0}
=\delta_{k,0}\delta_{l,0}\delta_{n,0}
\end{equation*}
and expanding $\cos^{2i}(\psi)=(1-\sin^2(\psi))^i$ in powers of $\sin(\psi)$ gives
\begin{align}
\nonumber\sum_{i=0}^\infty A_i(\phi-\pi/2)^{2i}&=\sum_{i=0}^\infty A_i\cos^{2i}(\psi)\sum_{l=0}^{\infty}\sin^l(\psi)\sum^\infty_{k=l+2i}
\sigma^{k}\sum^{k}_{n=2i}
\epsilon^nc^{\phi}_{k,l,n,2i}\\
\nonumber&\hspace{-8mm}=\sum_{i=0}^\infty A_i\sum_{j=0}^i\binom{i}{j}(-1)^j\sin^{2j}(\psi)\sum_{l=0}^{\infty}\sin^l(\psi)\sum_{k=l+2i}^\infty
\sigma^{k}\sum_{n=2i}^k
\epsilon^nc^{\phi}_{k,l,n,2i}\\
\label{eq:even_power_series}&\hspace{-8mm}=\sum_{l=0}^\infty\left(\sum_{k=l}^\infty\left(\sum_{n=0}^k\epsilon^nc^A_{k,l,n}\right)\sigma^k\right)\sin^l(\psi)
\end{align}
where coefficients of $\sin(\psi)$, $\sigma$ and $\epsilon$ are collected by
\begin{align*}
c^A_{k',l',n'}&=\sum_{i=0}^\infty A_i\sum_{j=0}^i\binom{i}{j}(-1)^j\sum_{l=0}^{\infty}\delta_{l'-2j-l}\sum_{k=l+2i}^\infty
\delta_{k'-k}\sum_{n=2i}^k
\delta_{n'-n}c^{\phi}_{k,l,n,2i}\\
&=\sum_{i=0}^{\lfloor n'/2\rfloor}A_i\sum_{j=\max(0,\lceil (l'+2i-k')/2\rceil)}^{\min(i,\lfloor l'/2\rfloor)}\binom{i}{j}(-1)^jc^{\phi}_{k',l'-2j,n',2i}
\end{align*}
and where the summation limits follow from the constraints:
\begin{itemize}
\item $\delta_{l'-2j-l}$, $l\geq 0$ and $j\geq 0$ imply
\begin{itemize}
\item $l'\geq 0$
\item $j\leq \lfloor l'/2\rfloor$
\item $l=l'-2j$
\end{itemize} 
\item $\delta_{k'-k}$, $k\geq l + 2i$, $l\geq 0$ and $i\geq j\geq 0$ imply
\begin{itemize}
\item $k'=k$
\item $k'\geq l'-2j+2i\geq l'$
\item $j\geq \lceil(l'+2i-k')/2\rceil$
\end{itemize}
\item $\delta_{n'-n}$, $2i\leq n\leq k$ imply
\begin{itemize}
\item $n=n'$
\item $i\leq \lfloor n'/2\rfloor$
\end{itemize}
\end{itemize}
Note that the parity constraint of \eqref{eq:parity_base} becomes particularly simple. 
\begin{equation*}
c^{\phi}_{k,l-2j,n,2i}=0\quad\text{unless}\quad k\equiv n\equiv (l-2j)+2i
\end{equation*}
but $(l-2j)+2i\equiv l$ so this simplifies to 
\begin{equation}\label{eq:parity_series}
c^{\phi}_{k,l-2j,n,2i}=0\quad\text{unless}\quad l\equiv k\equiv n
\end{equation}
This allows the triple sum to be reindexed with $k''=\prt(l)+2k$ and $n''=\prt(l)+2n$ giving (with $''$ dropped)
\begin{align*}
&\sum_{i=0}^\infty A_i(\phi-\pi/2)^{2i}=\\
&\sum_{l=0}^\infty\left(\sum_{k= \lfloor l/2\rfloor}^\infty\left(\sum_{n=0}^k\epsilon^{\prt(l)+2n}c^A_{\prt(l)+2k,l,\prt(l)+2n}\right)\sigma^{\prt(l)+2k}\right)\sin^l(\psi)
\end{align*}
in which the parity constraint is automatically satisfied.

\subsection{Series expansion for \texorpdfstring{$\sin(\phi)$ and $\cos(\phi)$}{sin(φ) and cos(φ)}}
For $0<\psi<\pi/2$, using \eqref{eq:even_power_series}
\begin{align*}
\sin(\phi)&=\cos(\phi - \pi/2)\\
&=\sum_{i=0}^\infty\frac{(-1)^i}{(2i)!}(\phi-\pi/2)^{2i}\\
&=\sum_{l=0}^\infty\left(\sum_{k=l}^\infty\left(\sum_{n=0}^k\epsilon^nc^{\sin}_{k,l,n}\right)\sigma^k\right)\sin^l(\psi)
\end{align*}
where
\begin{equation*}
c^{\sin}_{k,l,n}=\sum_{i=0}^{\lfloor n/2\rfloor}\frac{(-1)^i}{(2i)!}\sum_{j=\max(0,\lceil (l+2i-k)/2\rceil)}^{\min(i,\lfloor l/2\rfloor)}\binom{i}{j}(-1)^jc^{\phi}_{k,l-2j,n,2i}
\end{equation*}
with the parity constraint
\begin{equation*}
c^{\sin}_{k,l,n}=0\quad\text{unless}\quad l\equiv k\equiv n
\end{equation*}
Since $c^\phi_{k,l,n,0}=\delta_{k,0}\delta_{l,0}\delta_{n,0}$, the zeroth power contributes only to the constant term. Consequently, $c^{\sin}_{k,l,0}=0$ unless $k=l=0$, with $c^{\sin}_{0,0,0}=1$, and $c^{\sin}_{k,l,1}=0$.
Subtracting the leading term, rearranging the sums and reindexing with $l=2l'+\prt(k)$ and $n=2n'+\prt(k)$ meaning
\begin{equation*}
2\leq k,\quad0\leq l'\leq\lfloor k/2\rfloor\quad\text{and}\quad 1\leq n'\leq\lfloor k/2\rfloor
\end{equation*}
gives (dropping the ')
\begin{equation*}
\sin(\phi)-1
=
\sum_{k=2}^{\infty}
\left(
\sum_{l=0}^{\lfloor k/2\rfloor}
\left(
\sum_{n=1}^{\lfloor k/2\rfloor}
d^{\sin}_{k,l,n}
\epsilon^{2n+\prt(k)}
\right)
\sin^{2l+\prt(k)}(\psi)
\right)
\sigma^{k}
\end{equation*}
where
\begin{equation}\label{eq:d_sin}
d^{\sin}_{k,l,n}
=
c^{\sin}_{k,2l+\prt(k),2n+\prt(k)}
\end{equation}
From the symmetry of the $\psi$ and $\phi$ relation, for all $\psi$
\begin{equation*}
\frac{\sin(\phi)-\sgn(\psi)}{\sgn(\psi)}
=\!
\sum_{k=2}^{\infty}
\!\Bigg(
\sum_{l=0}^{\lfloor k/2\rfloor}
\!\Bigg(
\sum_{n=1}^{\lfloor k/2\rfloor}
d^{\sin}_{k,l,n}
\epsilon^{2n}
\Bigg)
|\sin(\psi)|^{2l}
\Bigg)
\epsilon^{\prt(k)}
|\sin(\psi)|^{\prt(k)}
\sigma^k
\end{equation*}

Similarly, using~\eqref{eq:power_series} with exponent $2i+1$ and writing $\cos^{2i+1}(\psi)=\cos(\psi)(1-\sin^2(\psi))^i$, one factor of $\cos(\psi)$ can be extracted, while the remaining even power is expanded in powers of $\sin(\psi)$.
\begin{align*}
\cos(\phi)&=-\sin(\phi - \pi/2)\\
&=\sum_{i=0}^\infty\frac{(-1)^{i+1}}{(2i+1)!}(\phi-\pi/2)^{2i+1}\\
&=\cos(\psi)\sum_{l=0}^\infty\left(\sum_{k=l+1}^\infty\left(\sum_{n=1}^k\epsilon^nc^{\cos}_{k,l,n}\right)\sigma^k\right)\sin^l(\psi)
\end{align*}
where
\begin{equation*}
c^{\cos}_{k,l,n}
=\sum_{i=0}^{\left\lfloor\frac{n-1}{2}\right\rfloor}\frac{(-1)^{i+1}}{(2i+1)!}
\sum_{j=\max(0,\lceil(l+(2i+1)-k)/2\rceil)}^{\min(i,\left\lfloor\frac{l}{2}\right\rfloor)}
\binom{i}{j}(-1)^j\,c^{\phi}_{k,l-2j,n,2i+1}
\end{equation*}
with the parity constraint
\begin{equation*}
c^{\cos}_{k,l,n}=0\quad\text{unless}\quad l+1\equiv  k\equiv n
\end{equation*}
Rearranging the sums and reindexing
\begin{equation*}
\frac{\cos(\phi)}{\cos(\psi)}
=
\sum_{k=1}^{\infty}
\left(
\sum_{l=0}^{\lfloor\frac{k-1}{2}\rfloor}
\left(
\sum_{n=\prt(k+1)}^{\lfloor k/2\rfloor}
d^{\cos}_{k,l,n}\,
\epsilon^{2n+\prt(k)}
\right)
\sin^{2l+\prt(k+1)}(\psi)
\right)
\sigma^k
\end{equation*}
where
\begin{equation}\label{eq:d_cos}
d^{\cos}_{k,l,n}
=
c^{\cos}_{k,2l+\prt(k+1),2n+\prt(k)}
\end{equation}
Factoring out common powers, and from the symmetry of the $\psi$ and $\phi$ relation, for all $\psi$
\begin{equation*}
\frac{\cos(\phi)}{\cos(\psi)}
=\!
\sum_{k=1}^{\infty}
\!\Bigg(
\sum_{l=0}^{\lfloor \frac{k-1}{2}\rfloor}
\!\Bigg(
\sum_{n=\prt(k+1)}^{\lfloor k/2\rfloor}
d^{\cos}_{k,l,n}
\epsilon^{2n}
\!\Bigg)
|\sin(\psi)|^{2l}
\!\Bigg)
\epsilon^{\prt(k)}
|\sin(\psi)|^{\prt(k+1)}
\sigma^k .
\end{equation*}

\subsection{Series expansion for \texorpdfstring{$\sin(\psi)/\sin(\phi)$}{sin(ψ)/sin(φ)}}
Start out with
\begin{equation*}
\frac{\sin(\psi)}{\sin(\phi)}=\frac{\sin(\psi)}{\cos(\phi-\pi/2)}
\end{equation*}
Further, for $|\phi-\pi/2|<\tfrac{\pi}{2}$, and particularly for $\phi\in(0,\pi/2)$
\begin{align*}
\frac{1}{\cos(\phi-\pi/2)}&=\sum_{i=0}^\infty\frac{E_{2i}}{(2i)!}(\phi-\pi/2)^{2i}
\end{align*}
where $E_{2i}$ are the \emph{Euler secant numbers} defined by the recurrence
\begin{equation*}
E_{2i}=\sum_{j=0}^{i-1}(-1)^{i-j+1}\binom{2i}{2j}E_{2j},\quad E_0=1
\end{equation*}
Combining the above with \eqref{eq:even_power_series} gives
\begin{align}
\frac{\sin(\psi)}{\sin(\phi)}&=\frac{\sin(\psi)}{\cos(\phi-\pi/2)}\nonumber\\
&=\sin(\psi)\sum_{i=0}^\infty\frac{E_{2i}}{(2i)!}(\phi-\pi/2)^{2i}\nonumber\\
&=\sin(\psi)\sum_{l=0}^\infty\left(\sum_{k=l}^\infty\left(\sum_{n=0}^k\epsilon^nc^{\sin^{-1}}_{k,l,n}\right)\sigma^k\right)\sin^l(\psi)\nonumber\\
&=\sum_{l=1}^\infty\left(\sum_{k=l-1}^\infty\left(\sum_{n=0}^k\epsilon^nc^{\sin^{-1}}_{k,l-1,n}\right)\sigma^k\right)\sin^l(\psi)\label{eq:sin_psi_sin_phi}
\end{align}
where
\begin{equation*}
c^{\sin^{-1}}_{k,l,n}=\sum_{i=0}^{\lfloor n/2\rfloor}\frac{E_{2i}}{(2i)!}\sum_{j=\max(0,\lceil (l+2i-k)/2\rceil)}^{\lfloor l/2\rfloor}\binom{i}{j}(-1)^jc^{\phi}_{k,l-2j,n,2i}
\end{equation*}
By \eqref{eq:parity_series}
\begin{equation*}
c^{\sin^{-1}}_{k,l,n}=0\quad\text{unless}\quad k\equiv l\equiv n
\end{equation*}
Further, for $n\in\{0,1\}$, the outer sum in $c^{\sin^{-1}}_{k,l,n}$ only contains $i=0$. Since $\binom{0}{j}=0$ for $j>0$, only the term $c^\phi_{k,l,n,0}$ contributes, and by definition $c^\phi_{k,l,n,0}=\delta_{k,0}\delta_{l,0}\delta_{n,0}$. Hence $c^{\sin^{-1}}_{k,l,0}=0$ unless $k=l=0$, with $c^{\sin^{-1}}_{0,0,0}=1$, and $c^{\sin^{-1}}_{k,l,1}=0$. This means that
\begin{equation*}
\frac{\sin(\psi)}{\sin(\phi)}=\sin(\psi)+\sum_{l=1}^\infty\left(\sum_{k=l-1}^\infty\left(\sum_{n=2}^k\epsilon^nc^{\sin^{-1}}_{k,l-1,n}\right)\sigma^k\right)\sin^l(\psi)
\end{equation*}

\subsection{Series expansion for \texorpdfstring{$\epsilon^2N/a$}{ε²N/a}}\label{sec:N_series}
The reason to include the factor $(1-e^2)/a=\epsilon^2/a$, rather than deriving a series expansion just for $N=a/\sqrt{1-e^2\sin^2(\phi)}$, is that it prevents negative powers of $\epsilon$ in the expansion.
Binomial expansion gives
\begin{align}
\nonumber\epsilon^2N/a&=\frac{\epsilon^2}{\sqrt{1-e^{2}\sin^{2}(\phi)}}\\
\label{eq:h_second_term}&=\epsilon^2\sum_{m=0}^{\infty}\binom{2m}{m}2^{-2m}e^{2m}\sin^{2m}(\phi)
\end{align}
Further
\begin{align*}
\sin^{2m}(\phi)&=\cos^{2m}(\phi-\pi/2)\\
&=2^{-2m}\binom{2m}{m}+2^{-2m}2\sum_{j=1}^{m}\binom{2m}{m-j}\cos(2j(\phi-\pi/2))\\
&=2^{-2m}\binom{2m}{m}+2^{-2m}2\sum_{j=1}^{m}\binom{2m}{m-j}\sum_{i=0}^\infty\frac{(-1)^i(2j)^{2i}}{(2i)!}(\phi-\pi/2)^{2i}\\
&=2^{-2m}\binom{2m}{m}+\sum_{i=0}^{\infty}\left(
2^{-2m+1}\frac{(-1)^i}{(2i)!}\sum_{j=1}^{m}\binom{2m}{m-j}(2j)^{2i}
\right) (\phi-\pi/2)^{2i}\\
&=\sum_{i=0}^{\infty}c_{m,i}(\phi-\pi/2)^{2i}
\end{align*}
where
\begin{equation*}
c_{m,i}=
\begin{cases}
1&i=0\\
2^{-2m+1}\frac{(-1)^i}{(2i)!}\sum_{j=1}^{m}\binom{2m}{m-j}(2j)^{2i}&i\geq1
\end{cases}
\end{equation*}
Inserting into \eqref{eq:h_second_term} and using~\eqref{eq:even_power_series}
\begin{align}
\epsilon^2N/a&=\epsilon^2\sum_{m=0}^{\infty}\binom{2m}{m}2^{-2m}e^{2m}\sin^{2m}(\phi)\nonumber\\
&=\epsilon^2\sum_{m=0}^{\infty}\binom{2m}{m}2^{-2m}e^{2m}\sum_{i=0}^{\infty}c_{m,i}(\phi-\pi/2)^{2i}\nonumber\\
&=\epsilon^2\sum_{i=0}^{\infty}\left(\sum_{m=0}^{\infty}\binom{2m}{m}2^{-2m}e^{2m}c_{m,i}\right)(\phi-\pi/2)^{2i}\nonumber\\
&=\sum_{l=0}^\infty\left(\sum_{k=l}^\infty\left(\epsilon^2\sum_{n=0}^k\epsilon^{n}c^{N'}_{k,l,n}\right)\sigma^k\right)\sin^l(\psi)\label{eq:one_over_sqrt_expansion}
\end{align}
where
\begin{equation*}
c^{N'}_{k,l,n}=\sum_{i=0}^{\lfloor n/2\rfloor}\left(\sum_{m=0}^{\infty}\binom{2m}{m}2^{-2m}e^{2m}c_{m,i}\right)\sum_{j=\max(0,\lceil(l+2i-k)/2\rceil)}^{\min(i,\lfloor l/2\rfloor)}\binom{i}{j}(-1)^jc^{\phi}_{k,l-2j,n,2i}
\end{equation*}
The interchange of the $m$- and $i$-sums is justified by absolute convergence near $u=\phi-\pi/2=0$. Indeed, the formula for $c_{m,i}$ gives
\begin{equation*}
\sum_{i=0}^{\infty}|c_{m,i}|r^{2i}
=\cosh^{2m}(r)
\end{equation*}
Since $0<e<1$, choose $r>0$ such that $e^2\cosh^2(r)<1$. Then, for $|u|\leq r$,
\begin{equation*}
\epsilon^2\sum_{m=0}^{\infty}
\binom{2m}{m}4^{-m}e^{2m}
\sum_{i=0}^{\infty}|c_{m,i}u^{2i}|
\leq
\frac{\epsilon^2}{\sqrt{1-e^2\cosh^2(r)}}
<\infty
\end{equation*}
Thus the order of summation may be exchanged.

The challenge with $c^{N'}_{k,l,n}$ is the series in $e^{2m}$. Fortunately, it turns out that this sum can be written as a finite sum in $\epsilon$. There are many ways to proceed, but a convenient approach uses two key observations:
\begin{itemize}
\item $c_{m,i}$ may be expressed in terms of random walk moments, which for a fixed $i$ are polynomials in $m$.
\item Using the polynomials in $m$ and a Cauchy--Euler operator, the infinite sum can be reduced to a finite sum in powers of $\epsilon$.
\end{itemize}
Let $\varepsilon_1,\ldots,\varepsilon_{2m}$ be independent random variables taking the values $+1$ and $-1$ with equal probability, and let $S_{2m}=\varepsilon_1+\cdots+\varepsilon_{2m}$. Then
\begin{equation*}
\mathbb P(S_{2m}=2j)=2^{-2m}\binom{2m}{m+j},\qquad j=-m,\dots,m
\end{equation*}
and, for $i\geq 1$
\begin{align*}
\mathbb{E}[S^{2i}_{2m}]&=2^{-2m}\sum_{j=-m}^{m}\binom{2m}{m+j}(2j)^{2i}\\
&=2^{-2m+1}\sum_{j=1}^{m}\binom{2m}{m-j}(2j)^{2i}
\end{align*}
Consequently, $c_{m,i}$ may be interpreted as
\begin{equation*}
c_{m,i}=\frac{(-1)^i}{(2i)!}\,\mathbb{E}[S_{2m}^{2i}]\quad i\geq 1
\end{equation*}
Expanding the moments
\begin{align*}
\mathbb{E}\left(S_{2m}^{2i}\right)&=\mathbb{E}\left(\left(\varepsilon_1+\dots+\varepsilon_{2m}\right)^{2i}\right)\\
&=\mathbb{E}\left(\sum_{\alpha_1+\cdots+\alpha_{2m}=2i}
\binom{2i}{\alpha_1,\ldots,\alpha_{2m}}
\prod_{j=1}^{2m}\varepsilon_j^{\alpha_j}\right)\\
&=\sum_{\alpha_1+\cdots+\alpha_{2m}=2i}
\binom{2i}{\alpha_1,\ldots,\alpha_{2m}}\prod_{j=1}^{2m}\mathbb E[\varepsilon_j^{\alpha_j}]
\end{align*}
Now $\mathbb{E}[\varepsilon_j^{k}] = 0$ if $k$ is odd and $\mathbb{E}[\varepsilon_j^{k}] = 1$ if $k$ is even,
i.e. all non-zero terms must have all $\alpha_j$ even. Consequently, and with a change of variable to ensure this,
\begin{align*}
\mathbb{E}[S_{2m}^{2i}]&=\sum_{\substack{\alpha_1+\cdots+\alpha_{2m}=2i\\ \text{all }\alpha_j\text{ even}}}
\frac{(2i)!}{\alpha_1!\cdots \alpha_{2m}!}\\
&=(2i)!\!\!\sum_{\beta_1+\cdots+\beta_{2m}=i}
\frac{1}{(2\beta_1)!\cdots(2\beta_{2m})!}
\end{align*}
Now, the $m$ dependency can be separated out by counting the number $k$ of $\beta_i>0$. There are $\binom{2m}{k}$ ways this can happen, i.e.
\begin{equation*}
\mathbb{E}[S_{2m}^{2i}]
=\sum_{k=1}^{i}\binom{2m}{k}(2i)!
\sum_{\substack{\beta_1+\cdots+\beta_k=i\\ \beta_j\ge 1}}
\frac{1}{(2\beta_1)!\cdots(2\beta_k)!}
\end{equation*}
The moments can be converted to a polynomial in $m$ by $\binom{2m}{k}=\frac{(2m)_{\underline{k}}}{k!}$ and $
(2m)_{\underline{k}}=\sum_{r=0}^{k} s(k,r)\,(2m)^r
=\sum_{r=0}^{k} s(k,r)2^r m^r$, i.e.
\begin{equation*}
\mathbb{E}[S_{2m}^{2i}]
=\sum_{k=1}^{i}\frac{(2i)!}{k!}\sum_{r=0}^{k} s(k,r)2^r m^r
\sum_{\substack{\beta_1+\cdots+\beta_k=i\\ \beta_j\ge 1}}
\frac{1}{(2\beta_1)!\cdots(2\beta_k)!}
\end{equation*}
where $s(k,r)$ are signed Stirling numbers of the first kind.

Writing the inner sum as a generating function and identifying the resulting series with exponentials gives
\begin{align*}
\sum_{\substack{\beta_1+\cdots+\beta_k=i\\ \beta_j\ge 1}}
\frac{1}{(2\beta_1)!\cdots(2\beta_k)!}&=[w^i]\left(\sum_{r=1}^\infty\frac{w^r}{(2r)!}\right)^k\\
&=[u^{2i}]\left(\sum_{r=1}^\infty\frac{u^{2r}}{(2r)!}\right)^k\\
&=[u^{2i}]\left(\frac{e^{u}+e^{-u}}{2}-1\right)^k\\
&=[u^{2i}]2^{-k}e^{-ku}(e^u-1)^{2k}\\
&=[u^{2i}]2^{-k}\sum_{j=0}^{2k}(-1)^j\binom{2k}{j}e^{(j-k)u}\\
\end{align*}
Trivially $[u^{2i}]e^{(j-k)u}=\frac{(j-k)^{2i}}{(2i)!}$. Reindexing with $t=j-k$ and exploiting symmetry
\begin{align*}
\sum_{\substack{\beta_1+\cdots+\beta_k=i\\ \beta_j\ge 1}}
\frac{1}{(2\beta_1)!\cdots(2\beta_k)!}&=\frac{2^{-k}}{(2i)!}\sum_{j=0}^{2k}(-1)^j\binom{2k}{j}(j-k)^{2i}\\
&=\frac{2^{1-k}}{(2i)!}\sum_{t=1}^{k}(-1)^{k-t}\binom{2k}{k-t}t^{2i}
\end{align*}
Combining it all gives
\begin{equation*}
c_{m,i}=\sum_{r=0}^{i} a_{i,r}m^r
\end{equation*}
where
\begin{equation*}
a_{i,r}=
\begin{cases}
1 & i=r=0 \\
\frac{2(-1)^i}{(2i)!}\sum_{k=r}^{i} s(k,r) \left(\frac{1}{k!2^{k-r}}\sum_{t=1}^{k}(-1)^{k-t}\binom{2k}{k-t}\,t^{2i}\right) & i\geq 1,\:0\leq r\leq i
\end{cases}
\end{equation*}

Next, designate the Cauchy-Euler operator $D=x\frac{\text{d}}{\text{d} x}$ and note that
\begin{equation*}
D^r=\left(x\frac{d}{dx}\right)^r=\sum_{k=0}^r \stirlingsecond{r}{k}x^k\frac{d^k}{dx^k}
\end{equation*}
where $\stirlingsecond{r}{k}$ are the Stirling numbers of the second kind. Now let
\begin{equation*}
F(x) = \sum_{m=0}^\infty \binom{2m}{m}\left(\frac{x}{4}\right)^m = (1-x)^{-1/2}
\end{equation*}
Then
\begin{align*}
\sum_{m=0}^\infty \binom{2m}{m}\Big(\frac{x}{4}\Big)^m c_{m,i}
&=\sum_{r=0}^i a_{i,r}D^r F(x)\\
&=\sum_{r=0}^i a_{i,r}D^r (1-x)^{-1/2}\\
&=\sum_{r=0}^i a_{i,r}\sum_{k=0}^r \stirlingsecond{r}{k}\,(\tfrac12)_k\,x^k\,(1-x)^{-1/2-k}\\
&=\sum_{r=0}^i a_{i,r}\sum_{k=0}^r \stirlingsecond{r}{k}\,(\tfrac12)_k\sum_{q=0}^k (-1)^q\binom{k}{q}(1-x)^{-1/2-k+q}\\
&=\sum_{r=0}^i a_{i,r}\sum_{k=0}^r \stirlingsecond{r}{k}\,(\tfrac12)_k\sum_{t=0}^k (-1)^{k-t}\binom{k}{t}(1-x)^{-1/2-t}\\
&=\sum_{r=0}^i a_{i,r}\sum_{t=0}^r\sum_{k=t}^r \stirlingsecond{r}{k}\,(\tfrac12)_k(-1)^{k-t}\binom{k}{t}(1-x)^{-1/2-t}\\
&=\sum_{t=0}^i C_{i,t}(1-x)^{-1/2-t}
\end{align*}
where $(\tfrac12)_k$ is the rising factorial and where
\begin{equation*}
C_{i,t}=\sum_{r=t}^i a_{i,r}\sum_{k=t}^r \stirlingsecond{r}{k}\,(\tfrac12)_k(-1)^{k-t}\binom{k}{t}
\end{equation*}
For brevity, let
\begin{equation*}
R_{k,l,n,i}=\sum_{j=\max(0,\lceil (l+2i-k)/2\rceil)}^{\min(i,\lfloor l/2\rfloor)}\binom{i}{j}(-1)^jc^{\phi}_{k,l-2j,n,2i}
\end{equation*}
with the parity constraint
\begin{equation}\label{eq:R_parity}
R_{k,l,n,i} = 0\quad\text{unless}\quad l\equiv k\equiv n
\end{equation}
Now the sum over $\epsilon$ of \eqref{eq:one_over_sqrt_expansion} becomes
\begin{align*}
\epsilon^2\sum_{n=0}^k\epsilon^nc^{N'}_{k,l,n}&=\epsilon^2\sum_{n=0}^k\epsilon^n\sum_{i=0}^{\lfloor n/2\rfloor}\left(\sum_{m=0}^{\infty}\binom{2m}{m}2^{-2m}e^{2m}c_{m,i}\right)R_{k,l,n,i}\\
&=\epsilon^2\sum_{n=0}^k\epsilon^n
\sum_{i=0}^{\lfloor n/2\rfloor}
\sum_{t=0}^{i}
C_{i,t}(1-e^2)^{-1/2-t}R_{k,l,n,i}\\
&=\sum_{n=0}^k\sum_{i=0}^{\lfloor n/2\rfloor}\sum_{t=0}^iC_{i,t}R_{k,l,n,i}\epsilon^{n+1-2t}
\end{align*}
Now, the reason the factor $(1-e^2)=\epsilon^2$ was included right away becomes obvious, i.e. it prevents negative powers of $\epsilon$.
Introduce the new $\epsilon$ exponent $n'=n+1-2t$. Since $0\leq t\leq i\leq\lfloor n/2\rfloor$, it satisfies $1\leq n'\leq k+1$. Relabeling the old exponent as $p$ and the new exponent as $n$ gives
%
\begin{equation*}
\epsilon^2\sum_{n=0}^k\epsilon^nc^{N'}_{k,l,n}=\sum_{n=1}^{k+1}c^N_{k,l,n}\epsilon^n
\end{equation*}
where
\begin{equation}
c^N_{k,l,n}=\sum_{p=0}^{k}\sum_{i=0}^{\lfloor p/2\rfloor}
\mathbf{1}_{\substack{p\equiv n+1\\0\le (p+1-n)/2\le i}}
C_{i,(p+1-n)/2}R_{k,l,p,i}
\end{equation}
where only the terms satisfying the indicator condition are evaluated.
Finally, inserting into \eqref{eq:one_over_sqrt_expansion}
\begin{equation}\label{eq:N_series}
\epsilon^2N/a=\sum_{l=0}^\infty\left(\sum_{k=l}^\infty\left(\sum_{n=1}^{k+1}c^N_{k,l,n}\epsilon^n\right)\sigma^k\right)\sin^l(\psi)
\end{equation}
Note that the parity constraint \eqref{eq:R_parity} applies to $R_{k,l,p,i}$, i.e. $l\equiv k\equiv p$.
A nonzero contribution requires $l\equiv k\equiv p$ and $p\equiv n+1$. Consequently,
%
\begin{equation*}
c^N_{k,l,n}=0\quad\text{unless}\quad l\equiv k\equiv n+1
\end{equation*}

\subsection{Series expansion for \texorpdfstring{$h$}{h}}
Solving~\eqref{eq:base2} for $h$ gives
\begin{equation*}
\frac{h}{a}=\frac{\rho}{a}\frac{\sin(\psi)}{\sin(\phi)}-\epsilon^2N/a
\end{equation*}
%
For the first term, from \eqref{eq:sin_psi_sin_phi}
\begin{align*}
\frac{\rho}{a}\frac{\sin(\psi)}{\sin(\phi)}&=\frac{\rho\sin(\psi)}{a}+\sigma(1 - \epsilon^2)\sum_{l=1}^\infty\left(\sum_{k=l-1}^\infty\left(\sum_{n=2}^k\epsilon^nc^{\sin^{-1}}_{k,l-1,n}\right)\sigma^k\right)\sin^l(\psi)\\
&=\frac{\rho\sin(\psi)}{a}+\sum_{l=1}^\infty\left(\sum_{k=l}^\infty\left(\sum_{n=1}^{k+1}\epsilon^nc^\rho_{k,l,n}\right)\sigma^{k}\right)\sin^l(\psi)
\end{align*}
where
\begin{equation*}
c^\rho_{k,l,n}=
\begin{cases}
c^{\sin^{-1}}_{k-1,l-1,n} & 2\leq n\leq \min(3, k-1)\\
c^{\sin^{-1}}_{k-1,l-1,n} - c^{\sin^{-1}}_{k-1,l-1,n-2} & 4\leq n\leq k-1\\
-c^{\sin^{-1}}_{k-1,l-1,n-2} & \max(4,k) \leq n\leq k+1\\
0 & \text{otherwise}
\end{cases}
\end{equation*}
with the parity constraint
\begin{equation*}
c^\rho_{k,l,n}=0\quad\text{unless}\quad l\equiv k\equiv n+1
\end{equation*}
Combining with \eqref{eq:N_series}, the complete series expansion of $h$ is given by
\begin{align*}
\frac{h}{a}&=\frac{\rho}{a}\frac{\sin(\psi)}{\sin(\phi)}-\epsilon^2N/a\\
&=\frac{\rho\sin(\psi)}{a}+\sum_{l=0}^\infty\left(\sum_{k=l}^\infty\left(\sum_{n=1}^{k+1}c^h_{k,l,n}\epsilon^n\right)\sigma^{k}\right)\sin^l(\psi)
\end{align*}
where
\begin{equation*}
c^h_{k,l,n}=
\begin{cases}
-c^N_{k,l,n}&l=0\\
c^\rho_{k,l,n}-c^N_{k,l,n}&\text{otherwise}
\end{cases}
\end{equation*}
with the parity constraint
\begin{equation*}
c^h_{k,l,n}=0\quad\text{unless}\quad l\equiv k\equiv n+1
\end{equation*}
Since the signed distance is invariant under reflection in either coordinate axis, the first-quadrant formula extends to all quadrants by replacing $\sin(\psi)$ with $|\sin(\psi)|$.
Further, rearranging the sums and reindexing with $l=2l'+\prt(k)$, $n=2n'+\prt(k)+1$ fulfilling the parity constraints and giving the ranges
\begin{equation*}
0\leq k,\quad 0\leq l'\leq\lfloor k/2\rfloor\quad\text{and}\quad 0\leq n'\leq\lfloor k/2\rfloor
\end{equation*}
gives (dropping ')
\begin{equation*}
\frac{h}{a}
=
\frac{\rho|\sin(\psi)|}{a}-\epsilon+
\sum_{k=2}^{\infty}
\Bigg(
\sum_{l=0}^{\lfloor k/2\rfloor}
\Bigg(
\sum_{n=0}^{\lfloor k/2\rfloor}
d^{h}_{k,l,n}
\epsilon^{2n}
\Bigg)
|\sin(\psi)|^{2l}
\Bigg)
\epsilon^{1+\prt(k)}
|\sin(\psi)|^{\prt(k)}
\sigma^k
\end{equation*}
where
\begin{equation}\label{eq:d_h}
d^{h}_{k,l,n}=
c^h_{k,2l+\prt(k),2n+\prt(k)+1}
\end{equation}
Note, the summation of $k$ from 2 is achieved by noting that the coefficient $d^h_{0,0,0}=c^h_{0,0,1}=-C_{0,0}R_{0,0,0,0}=-1$ and the second coefficient $d^{h}_{1,0,0}=c^h_{1,1,2}=c^{\rho}_{1,1,2}-c^N_{1,1,2}=0$, since $c^\rho_{1,1,2}=0$ (otherwise case) and $c^N_{1,1,2}=C_{0,0}c^\phi_{1,1,1,0}=0$. 

\section{Region of convergence}\label{sec:region_of_convergence}
All series are based on the \emph{Lagrange–B\"urmann} series~\eqref{eq:base4}. Hence, first its region of convergence is derived in the subsection after next, followed in subsequent subsections by an analysis of the additional singularities, and of singularity cancellations, introduced by the transformations of~\eqref{eq:base4} used to derive the subsequent series.

Note, the convergence is first determined for non-degenerate parameters and over the open first quadrant, i.e.
\begin{equation*}
0<e<1,\qquad 0<\epsilon=\sqrt{1-e^2}<1,\qquad 0<\psi<\pi/2,\qquad z=\tan(\psi)>0
\end{equation*}
Then the boundary $\rho=\rho_c$ and the end-point cases $\psi=0$ and $\psi=\pi/2$ are treated separately. Finally, the region is extended to all (closed) quadrants by symmetry arguments and stated axis conventions. Next, the transfer argument used to establish the convergence on the boundary is recalled.

\subsection{The transfer method}\label{sec:transfer}
The convergence of the series on its circle of convergence can be determined from the local form of the dominant singularity. The so-called transfer method is outlined here and referred to throughout the convergence proof. For details, see Chapter VI of~\cite{Flajolet2009}.

Suppose that a function $F(x)$, analytic at $x=0$, has the radius of convergence $R=|x_c|$ and that near a unique dominant singularity $x_c$, i.e. the only singularity on $|x|=R$, it has the local expansion
\begin{equation}\label{eq:darboux_structure}
F(x)
=
F_{\mathrm{reg}}(x)
+
A\left(1-\frac{x}{x_c}\right)^{\alpha}
+
O\left(\left(1-\frac{x}{x_c}\right)^{\alpha+1}\right),
\qquad
A\neq0
\end{equation}
where $F_{\mathrm{reg}}$ is analytic at $x_c$ and $\alpha\notin\mathbb{N}_0$. Suppose further that the singularities of $F$ are isolated, so that, with $x_c$ the only one on $|x|=R$, $F$ continues analytically to a disk $|x|<R+\eta$, $\eta>0$, slit along the ray from $x_c$ outwards, and that~\eqref{eq:darboux_structure} holds as $x\to x_c$ in this slit disk. Then $F$ is $\Delta$-analytic at $x_c$ in the sense of Definition VI.1 of~\cite{Flajolet2009}, and the Taylor coefficients
\begin{equation*}
F(x)=\sum_{k=0}^{\infty}a_kx^k
\end{equation*}
satisfy
\begin{equation}\label{eq:darboux_transfer}
a_k
\sim
\frac{A}{\Gamma(-\alpha)}
x_c^{-k}k^{-\alpha-1}
\end{equation}
This follows from the coefficient asymptotics of the standard scale (Theorem VI.1 of~\cite{Flajolet2009}), the transfer of the error term (Theorem VI.3 of~\cite{Flajolet2009}), and their combination in Theorem VI.4 of~\cite{Flajolet2009}, after rescaling $x\mapsto x/x_c$.
%
The regular analytic part does not contribute to the leading algebraic coefficient asymptotics.
Thus the algebraic type of the dominant singularity directly determines the decay of the Taylor coefficients.

In particular, for a square-root singularity, $\alpha=1/2$, so
\begin{equation*}
a_k=O\left(R^{-k}k^{-3/2}\right)
\end{equation*}
Consequently, on the circle $|x|=R$,
\begin{equation*}
|a_kx^k|=O(k^{-3/2})
\end{equation*}
and the series converges absolutely since $\sum k^{-3/2}$ converges. Hence a square-root branch point may determine the radius of convergence while the series itself still converges on the corresponding boundary.

Conversely, a unique dominant inverse square-root singularity, $\alpha=-1/2$, gives the coefficient asymptotic
\begin{equation*}
a_k\sim\frac{A}{\sqrt{\pi}}x_c^{-k}k^{-1/2}
\end{equation*}
Thus the series diverges at $x=x_c$ and is not absolutely convergent anywhere on $|x|=R$. At other points of the circle, conditional convergence may occur, so convergence at the particular boundary point under consideration must be checked separately.

When several dominant singularities are present, each must satisfy the corresponding $\Delta$-analyticity condition and their contributions add (Theorem VI.5 of~\cite{Flajolet2009}). In particular, if each is of square-root type, then still $a_k=O(R^{-k}k^{-3/2})$ and the series converges absolutely on $|x|=R$. 

\subsection{Region of convergence for series of \texorpdfstring{$\phi-\pi/2$}{φ - π/2}}
Recalling the components of Section~\ref{app:phi}, the series~\eqref{eq:base4} is that of
\begin{equation*}
\phi(x)=\psi+f(y_0(x)),
\qquad x=\frac{\rho\cos(\psi)}{ae^2}
\end{equation*}
with $f(y)$ and $y(x)$ depending parametrically on $\psi$, and where $y(x)$ is implicitly defined by
\begin{equation*}
y=xg(y)
\end{equation*}
where
\begin{equation*}
y=\frac{1}{\tan(\phi) - \tan(\psi)},\quad z = \tan(\psi)
\end{equation*}
\begin{equation*}
g(y)=\frac{(1 + \epsilon^2(z+1/y)^2)^{1/2}}{z + 1/y}
=\left(\epsilon^2+\left(\frac{y}{1+zy}\right)^2\right)^{1/2},\quad g(0)=\epsilon
\end{equation*}
\begin{equation*}
\text{and}\quad f(y)=\tan^{-1}\left(\frac{1}{z +(1+z^2)y}\right)
\end{equation*}
where $f(y)$ is defined to be the branch $f(0)=\pi/2-\psi$ and where $y_0(x)$ is the branch of the inverse of $x(y)=y/g(y)$ for which $y(0)=0$. It is essential to track the inverse branch $y_0(x)$ originating at $y_0(0)=0$, since singularities belonging only to other inverse branches do not constrain its Taylor series.


The region of convergence of the \emph{Lagrange–B\"urmann} series~\eqref{eq:base4} is bounded by the nearest singularity of the function $f(y_0(x))$, that is points $x_c$ in the complex $x$-plane for which $f(y_0(x))$ ceases to be analytic. This can happen at points $x_c$:
\begin{itemize}
\item where $y_0(x_c)$ is singular, provided that
the singularity is not removed by composition with $f$.
\item where $y_0(x)$ remains analytic but $y_0(x_c)$ is a singularity of $f$
\end{itemize}
In subsequent subsubsections, it is first shown that the singular points of $y_0(x)$ bound the region of convergence to the evolute. Second, it is shown that there are indeed singular points of $f(y)$ but they do not impose a smaller radius of convergence of the selected branch.
Finally, the outer limit and the end points are treated and the region of convergence is extended to all quadrants.

\subsubsection{Singular points of $y(x)$}
From the implicit function theorem, since the inverse
\begin{equation*}
x(y)=\frac{y}{g(y)}=\frac{y(1+zy)}{(y^2+\epsilon^2(1+zy)^2)^{1/2}}\quad\text{with}\quad 
\frac{\partial x}{\partial y}
=
\frac{zy^3+\epsilon^2(1+zy)^3}
{\left(y^2+\epsilon^2(1+zy)^2\right)^{3/2}}
\end{equation*}
is analytic at 0, i.e. $x(0)=0$ and $\left.\frac{\partial}{\partial y}x(y)\right|_0=1/\epsilon\neq 0$, $y_0(x)$ is analytic in a region around $x=0$ until $x(y)$ exhibits a
\begin{itemize}
\item Critical point, i.e. $\frac{\partial}{\partial y}\frac{y}{g(y)}=0$
\item Pole or singularity, i.e. $g(y)=0$ or $g(y)$ is singular.
\item A sequence $y\to\infty$ for which $x(y)$ tends to a finite value.
\end{itemize}
Clearly
\begin{equation*}
x(y)\sim
\pm\frac{z}{\sqrt{1+\epsilon^2z^2}}\,y
\qquad(y\to\infty)
\end{equation*}
Thus no sequence $y\to\infty$ gives a finite limiting value of $x(y)$.
Being singular means that the denominator of $g(y)$ is zero, i.e.
\begin{equation*}
1+zy = 0 \implies y = -1/z
\end{equation*}
However, inserting back into $x(y)$ gives
\begin{equation}\label{eq:one_over_z_is_zero}
x(-1/z) = 0
\end{equation}
so $x(y)$ is actually not singular at this point and the apparent singularity of $g(y)$ does not impose any convergence bound. Further, for a pole to appear, the numerator of $g(y)$ must be zero, i.e. the complex roots
\begin{equation}\label{eq:values_at_infinity}
y^2+\epsilon^2(1+zy)^2=0 \implies y=\frac{-\epsilon^2z\pm i \epsilon}{1+\epsilon^2z^2}
\end{equation}
However, inserting back into $x(y)$, such a root is mapped to infinity so it does not constitute a finite convergence bound.
Finally, a critical point means $\frac{\partial}{\partial y}\frac{y}{g(y)}=0$ with the derivative given at the beginning of the section. After multiplying the numerator and denominator with $z^2$, this implies
\begin{equation}\label{eq:y_c_req}
(zy)^3+\epsilon^2z^2(1+zy)^3=0
\end{equation}
Let
\begin{equation*}
\frac{zy}{1+zy}=r
\end{equation*}
Then the critical point is determined by
\begin{equation*}
r^3=-\epsilon^2z^2
\end{equation*}
whose roots are
\begin{equation*}
r=-\epsilon^{2/3}z^{2/3}\omega
\end{equation*}
where $\omega\in\{1, \exp(2\pi i/3), \exp(-2\pi i/3)\}$. Note that
\begin{equation*}
\epsilon^2z^2 = -r^3,\quad zy = \frac{r}{1-r}\quad\text{and}\quad 1+zy=\frac{1}{1-r}
\end{equation*}
Using this, $x^2=y^2/g(y)^2$ can be rewritten as
\begin{align*}
x^2 &= \frac{(zy)^2(1+zy)^2}{(zy)^2 + \epsilon^2z^2(1+zy)^2}\\
&=\frac{1}{(1-r)^3}
\end{align*}
The corresponding critical values $x_c$ fulfill
\begin{equation}\label{eq:x_bound}
|x_c| = \frac{1}{|1+\epsilon^{2/3}z^{2/3}\omega|^{3/2}}
\end{equation}
Now the distance from $x=0$ to the root is determined by the magnitude of the denominator and
\begin{equation*}
\left|(1+\epsilon^{2/3}z^{2/3})\right|> \left|(1+\epsilon^{2/3}z^{2/3}\exp(\pm2\pi i/3))\right|
\end{equation*}
and, consequently, the real root $\omega=1$ is the one closest to $x=0$. 
Inserting back $x=\rho\cos(\psi)/(ae^2)$ and $z=\tan(\psi)$ gives
\begin{equation}\label{eq:phi_bound_single_quadrant}
\rho_c =\frac{ae^2}
{\left(\cos(\psi)^{2/3}+\epsilon^{2/3}\sin(\psi)^{2/3}\right)^{3/2}}
\end{equation}
which is the radial equation of the evolute of the ellipse in the first quadrant, i.e. the range $0<\psi<\pi/2$ for which the series~\eqref{eq:base4} is derived. 

Further, for the root $\omega=1$, $zy = \frac{r}{1-r}$ and $r=-\epsilon^{2/3}z^{2/3}$ imply that for $z>0$
\begin{equation*}
-\frac{1}{z}<y_c<0,
\qquad
1+zy_c=\frac{1}{1+\epsilon^{2/3}z^{2/3}}>0
\quad\text{and}\quad
x_c<0
\end{equation*}
$y_c$ is the only real critical point, since $r$ is real only for $\omega=1$ and $y=r/(z(1-r))$ is real only for real $r$. Hence, $x'(y)>0$ on $y_c<y<\infty$, as $x'(0)=1/\epsilon>0$. Further, $x(0)=0$, so the inverse $y_0(x)$ starts at zero and continues along $x_c<x\leq0$ to $y_c$. Thus, the critical point $y_c$ is indeed on the branch starting at $y_0(0)=0$ and, as previously established, it gives the closest critical value. Moreover, $x(y)$ increases from $0$ to $\infty$ on $0\leq y<\infty$, so $y_0$ is real analytic on $0\leq x<\infty$, in particular at $x=-x_c$. Consequently, $x_c$ is the unique singularity of $y_0$ on $|x|=|x_c|$, and~\eqref{eq:phi_bound_single_quadrant} bounds the convergence of the series expansion.

Finally, as shown, for the evolute critical points $x'(y_c)=0$. Direct evaluation gives
\begin{equation*}
x''(y_c)=
\frac{3z}
{\left(y_c^2+\epsilon^2(1+zy_c)^2\right)^{1/2}}
\end{equation*}
which is finite and non-zero for $z\neq0$, since $y_c^2+\epsilon^2(1+zy_c)^2$ vanishes only at the points~\eqref{eq:values_at_infinity}, which are mapped to $x=\infty$. That is, the critical point is simple. Hence, locally
\begin{equation*}
x-x_c = C(y-y_c)^2+O((y-y_c)^3), \qquad C\neq0
\end{equation*}
so the inverse has a square-root branch point,
\begin{equation*}
y-y_c = C_1\sqrt{x-x_c}+O(x-x_c)
\end{equation*}
and
\begin{equation}\label{eq:roc_structure}
f(y(x))
=
f(y_c)+f'(y_c)C_1\sqrt{x-x_c}+O(x-x_c)
\end{equation}
Now
\begin{equation*}
f'(y)=
-\frac{1+z^2}
{\left(z+(1+z^2)y\right)^2+1}
\end{equation*}
which is obviously not zero for any real $z$. The only remaining obstruction for the argument is that the denominator goes to zero, i.e.
\begin{equation*}
y_c=\frac{-z\pm i}{1+z^2}
\end{equation*}
which are exactly the singular points studied in the next subsubsection. This means that we must have
\begin{equation*}
\frac{zy_c}{1+zy_c}=\pm iz
\end{equation*}
However, it must also fulfill~\eqref{eq:y_c_req}, which can be factored to a similar requirements
\begin{equation*}
\left(\frac{zy_c}{1+zy_c}\right)^3
=
-\epsilon^2 z^2
\end{equation*}
Simultaneously fulfilling these two requirements for a $z\neq0$ implies $\mp iz=-\epsilon^2$, which can obviously not be true  for any real $z$ and, hence, the denominator of $f'(y)$ does not vanish at the evolute critical points. Consequently, the composition $f(y(x))$ also has a square-root branch point there. Thus these singularities are not removable.

\subsubsection{Singular points of $f(y)$}\label{sec:singular_points_f}
The branch points of $f(y)$, i.e. of $\tan^{-1}(\cdot)$, occur at
\begin{equation*}
\frac{1}{z +(1+z^2)y}=\pm i\quad\text{corresponding to}\quad y_\pm=\frac{-z\pm i}{1+z^2}
\end{equation*}
In terms of $x$, this gives four solutions
\begin{equation*}
x_{\pm,\pm}=\pm\frac{1\pm iz}{e(1+z^2)}
\end{equation*}
Inserting $z=\tan(\psi)$ yields
\begin{equation*}
x_{\pm,\pm}=\pm\frac{\cos(\psi)\bigl(\cos(\psi) \pm i\sin(\psi)\bigr)}{e}
\end{equation*}
which, in turn, gives the following condition for the branch points
\begin{equation*}
|x_{\pm,\pm}|=\frac{\cos(\psi)}{e}
\end{equation*}
Now since $x=\frac{\rho\cos(\psi)}{ae^2}$, in terms of $\rho$, this is
\begin{equation}\label{eq:ae_bound}
\rho = ae
\end{equation}
This sets a potential bound for the region of convergence.
However, for this bound to apply, necessarily, it has to be tighter than the evolute and the branch points must be reached by the selected inverse branch $y_0(x)$. 

The \emph{Lagrange–B\"urmann} series expands the branch defined by $y_0(0)=0$. For $x_{\pm,\pm}$ to limit the Lagrange branch, it is not enough that $x(y_\pm)=x_{\pm,\pm}$, but one must have $y_0(x_{\pm,\pm})=y_\pm$.
To proceed, introduce
\begin{equation*}
t(y)=\frac{y}{1+zy}\quad\text{and}\quad y(t)=\frac{t}{1-zt}
\end{equation*}
Substituting into $x(y)=y/g(y)$ gives
\begin{equation*}
x(t)=\frac{t}{(1-zt)\sqrt{\epsilon^2+t^2}}
\end{equation*}
$t(y)$ is a M\"{o}bius transformation, which preserves the local branch structure. Under this transformation, the candidate singular points $y_\pm$ become $t=\pm i$, while the Lagrange branch is the inverse branch $t_0(x)$ satisfying $t_0(0)=0$.
To determine the behaviors of the bounds, square $x(t)$ and construct
\begin{equation*}
P(t,x)=x^2(1-zt)^2(t^2+\epsilon^2)-t^2
\end{equation*}
The branch $t_0(x)$ is one of the roots born from the double root $t=0$ at $x=0$. The squaring in $P(t,x)$ does introduce extra roots but since $t_0(x)$ is still one of them, it can still be used for analysis. Now choose a contour $\Gamma$
\begin{equation*}
\Gamma=\{|t|=r\},\qquad r=\left(\frac{\epsilon^2}{z}\right)^{1/3}
\end{equation*}
By the argument principle, if $\Gamma$ excludes $t=\pm i$ and if no roots cross $\Gamma$, the number of roots inside $\Gamma$ remains constant and, consequently, it would mean $t_0(x_{\pm,\pm})\neq\pm i$.
From~\eqref{eq:x_bound} and the root $\omega=1$, the bound~\eqref{eq:ae_bound} is potentially more constraining if
\begin{equation*}
|x|=\frac{1}{e\sqrt{1+z^2}}\leq R\quad\text{where}\quad R=\frac{1}{(1+\epsilon^{2/3}z^{2/3})^{3/2}}=\frac{1}{(1+zr)^{3/2}}
\end{equation*}
Further, since $|x_{\pm,\pm}|\leq R<1$, $e^2(1+z^2)>1$ which in turn implies $z>\epsilon/e$ meaning
\begin{equation*}
r=\left(\frac{\epsilon^2}{z}\right)^{1/3}
<\left(\epsilon e\right)^{1/3}\leq 1
\end{equation*}
So the contour excludes $t=\pm i$.
Furthermore, using $\epsilon^2=zr^3$, this means for $|t|=r$ and $|x|<R$
\begin{align}
\label{eq:P_triangle}\left|x^2(1-zt)^2(t^2+\epsilon^2)\right|
&\leq |x|^2(1+zr)^2(r^2+\epsilon^2)\\
\nonumber&=|x|^2r^2(1+zr)^3\\
\nonumber&=r^2\frac{|x|^2}{R^2}
\end{align}
For $|x|<R$, this is strictly smaller than $|t^2|=r^2$,
so consequently
\begin{equation*}
P(t,x)\neq0\qquad\text{if}\quad|t|=r,\quad |x|<R
\end{equation*}
At $|x|=R$, equality in~\eqref{eq:P_triangle} can only occur at $t=-r$. At these points,
\begin{equation*}
P(-r,x_{\pm,\pm})
=r^2\left(\frac{x_{\pm,\pm}^2}{R^2}-1\right)
\end{equation*}
This is nonzero because $x_{\pm,\pm}$ is nonreal for $z>0$.
Now, for each candidate $x_{\pm,\pm}$ satisfying $|x_{\pm,\pm}|\leq R$, consider the segment $x=sx_{\pm,\pm}$, $0\leq s\leq1$. The preceding estimates show that $P(t,sx_{\pm,\pm})$ does not vanish on $\Gamma$ anywhere along this segment. Hence, by the argument principle, the number of roots inside $\Gamma$, counted with multiplicity, remains constant.
Since $sx_{\pm,\pm}$ lies in $|x|<R$ for $s<1$, where $y_0$ and hence $t_0$ is analytic by the preceding subsubsection, $t_0(sx_{\pm,\pm})$ varies continuously along the segment.
$P(t,0)=-t^2$, so there are two roots inside $\Gamma$. The selected branch $t_0(x)$ starts at zero and cannot leave the contour without crossing it. Since $r<1$, it therefore satisfies
\begin{equation*}
t_0(x_{\pm,\pm})\notin\{i,-i\}
\end{equation*}
Consequently, the singularities of $f$ are not reached by the selected inverse branch at any value within or on $|x|=R$ and they impose no smaller convergence radius and introduce no additional singularities on the convergence circle.

\subsubsection{Convergence on the boundary and at the end-points}
At this point, it has been shown that~\eqref{eq:phi_bound_single_quadrant} provides an open bound $\rho<\rho_c$ for the region of convergence on the open interval $0<\psi<\pi/2$. To determine convergence on the boundary, note that by the two preceding subsubsections, $x_c$ is the unique singularity of $f(y_0(x))$ on $|x|=|x_c|$, and from~\eqref{eq:roc_structure} it is of square-root type. Hence, by the transfer method of Section~\ref{sec:transfer}, the series converges absolutely on the limit $\rho=\rho_c$.
Further, the convergence for $\psi=0$, i.e. $z=0$, is immediate by noting that, in this case,
\begin{equation*}
x(y)=y/\sqrt{y^2+\epsilon^2}\quad\text{implying}\quad y(x)=\epsilon x/\sqrt{1-x^2}
\end{equation*}
Here, $f(y)=\pi/2-\tan^{-1}(y)$, whose branch points $y=\pm i$ correspond to $x=\pm1/e$, with $|x|>1$. Hence, $f(y(x))$ is analytic for $|x|<1$ and has two singularities on $|x|=1$, at $x=\pm1$. As $x\to\pm1$, $|y(x)|\to\infty$, and $\tan^{-1}(y)=\pm\pi/2-1/y+O(y^{-3})$ gives
\begin{equation*}
f(y(x))=f(y(\pm1))+\frac{\sqrt{1-x^2}}{\epsilon x}+O\big((1-x^2)^{3/2}\big)
\end{equation*}
i.e. square-root singularities at both points. By the multiple-singularity case of the transfer method, the series converges absolutely on $|x|=1$, i.e.\ $\rho=ae^2$, which coincides with the evolute.

Finally, at $\psi=\pi/2$, every nonconstant coefficient of the original series~\eqref{eq:base4}, after rewriting it in terms of $\rho$ and $\psi$, vanishes because it contains the factor $\cos(\psi)$. Thus the series for $\phi-\pi/2$ is identically zero and converges for all $\rho$.

\subsubsection{Complete region of convergence}
The evolute bound and the convergence on the bound and at the end-points mean that, for $0\leq\psi<\pi/2$ the series for $\phi-\pi/2$ converges absolutely up to and including the bound $\rho=\rho_c$. For the exceptional endpoint $\psi=\pi/2$, the series converges for all $\rho$.

The series~\eqref{eq:base4} derivation and the region of convergence were carried out for $z>0$, i.e. the open first quadrant. The series is later on extended to all quadrants with~\eqref{eq:phi_dense_expansion}. By reflection symmetry, the evolute bound holds in the other quadrants after replacing $\sin(\psi)$ and $\cos(\psi)$ with their absolute values. Combining these results, the unnormalized series for $\phi$, obtained by restoring the angular factor and constant term in~\eqref{eq:phi_dense_expansion}, converges absolutely for
\begin{equation*}
\rho \leq \frac{ae^2}{\left(|\cos(\psi)|^{2/3}+\epsilon^{2/3}|\sin(\psi)|^{2/3}\right)^{3/2}}\quad\text{or}\quad \psi=\pm\pi/2
\end{equation*}
where the absolute values and $\pm$ extend the evolute to all quadrants. The normalized series is treated below.

Strictly inside the evolute and off the coordinate axes, the series for $\phi-\pi/2$ may be differentiated termwise with respect to $\rho$, $\psi$ and $\epsilon$. The singularities and branch-point conditions identified above depend continuously on $(\psi,\epsilon)$, also for small complex perturbations, so the sum is jointly analytic and locally uniformly bounded there. Cauchy's estimate then bounds the coefficients locally uniformly, and the claim follows from the Weierstrass theorem. The same argument applies to the series derived in Sections~\ref{sec:conv_powers}--\ref{sec:conv_h}, since the additional conditions established there are likewise stable under small perturbations. These arguments justify termwise differentiation strictly inside the convergence region. Convergence of differentiated series on the boundary requires separate analysis and depends on the quantity and derivative order.

\subsubsection{Region of convergence for series of $(\phi-\pi/2)/\cos(\psi)$}
For fixed $0\leq\psi<\pi/2$, the factor $1/\cos(\psi)$ is a finite nonzero constant with respect to $\rho$. Hence, dividing the series for $\phi-\pi/2$ by $\cos(\psi)$ changes neither its radius of convergence nor its absolute convergence on the boundary. Consequently, the series for $(\phi-\pi/2)/\cos(\psi)$ has radius of convergence $\rho_c$
and converges absolutely for
\begin{equation*}
0\leq\rho\leq\rho_c
\end{equation*}
This includes the endpoint $\psi=0$, where $\cos(\psi)=1$ and
$\rho_c=ae^2$.

At $\psi=\pi/2$, both the numerator and denominator vanish, so the quotient is defined by its limit. Eliminating $h$ from \eqref{eq:base1} and \eqref{eq:base2} gives
\begin{equation*}
\frac{\cos(\phi)}{\cos(\psi)}
=
\frac{\rho}
{e^2N+\rho\sin(\psi)/\sin(\phi)}
\end{equation*}
As $\psi\to\pi/2$ on the closest-point branch, $\phi\to\pi/2$ and $N\to a/\epsilon$. Therefore
\begin{equation}\label{eq:coscos_limit}
\lim_{\psi\to\pi/2}\frac{\cos(\phi)}{\cos(\psi)}
=\frac{\epsilon\rho}{ae^2+\epsilon\rho}
\end{equation}
Since
\begin{equation*}
\lim_{\phi\to\pi/2}\frac{\phi-\pi/2}{\cos(\phi)}=-1
\end{equation*}
the continuously extended quotient is
\begin{equation*}
\left.\frac{\phi-\pi/2}{\cos(\psi)}\right|_{\psi=\pi/2}
=-\frac{\epsilon\rho}{ae^2+\epsilon\rho}
\end{equation*}
Introducing our variable $\sigma=\rho/(ae^2)$ and expanding gives
\begin{equation*}
-\frac{\epsilon\sigma}{1+\epsilon\sigma}
=\sum_{k=1}^{\infty}(-1)^k \left(\epsilon\sigma\right)^k,\qquad \left|\epsilon\sigma\right|<1
\end{equation*}
Its radius of convergence in $\rho$ is $ae^2/\epsilon$, determined by the pole at $\rho=-ae^2/\epsilon$. This radius equals $\rho_c$ at $\psi=\pi/2$. At the positive boundary $\rho=ae^2/\epsilon$, however, the terms alternate between $-1$ and $1$ and do not tend to zero. The series therefore diverges there.

It remains to show that this is the endpoint series obtained by evaluating the coefficients in \eqref{eq:phi_dense_expansion} at $\psi=\pi/2$. At $\psi=\pi/2$, the coefficient of $\sigma^k$ is
\begin{equation*}
\epsilon^{1+\prt(k+1)}\sum_{n=0}^{\lfloor(k-1)/2\rfloor}\epsilon^{2n}
\sum_{l=0}^{\lfloor(k-1)/2\rfloor}d^\phi_{k,l,n}
\end{equation*}
In \eqref{eq:d_phi}, the dependence on $l$ is through
$(-1)^l\binom{\lfloor(k-1)/2\rfloor-n}{l-n+j}$. For $0\leq j\leq n$,
\begin{equation*}
\sum_{l=0}^{\lfloor(k-1)/2\rfloor}(-1)^l\binom{\lfloor(k-1)/2\rfloor-n}{l-n+j}
=
\begin{cases}
0, & n<\lfloor(k-1)/2\rfloor\\
(-1)^{\lfloor(k-1)/2\rfloor-j}, & n=\lfloor(k-1)/2\rfloor
\end{cases}
\end{equation*}
by the alternating binomial identity. Thus only $n=\lfloor(k-1)/2\rfloor$ contributes. In this case, the factor $\binom{k-2-q}{k-1-q}$ eliminates every term of the $q$-sum except $q=k-1$, where it equals $\binom{-1}{0}=1$. Consequently,
\begin{equation*}
\sum_{l=0}^{\lfloor(k-1)/2\rfloor}d^\phi_{k,l,\lfloor(k-1)/2\rfloor}
=\frac{(-1)^k}{k}\sum_{j=0}^{\lfloor(k-1)/2\rfloor}(-1)^j2^{k-1-2j}\binom{k-1-j}{j}
\end{equation*}
To evaluate the remaining sum, expand the following generating function geometrically and then apply the binomial theorem
\begin{align*}
\frac{1}{1-2t+t^2}
&=\sum_{m=0}^{\infty}(2t-t^2)^m\\
&=\sum_{m=0}^{\infty}\sum_{j=0}^{m}(-1)^j2^{m-j}\binom{m}{j}t^{m+j}
\end{align*}
Extracting the coefficient of $t^{k-1}$ requires $m=k-1-j$, and hence
\begin{align*}
\sum_{j=0}^{\lfloor(k-1)/2\rfloor}
(-1)^j2^{k-1-2j}\binom{k-1-j}{j}
&=[t^{k-1}](1-2t+t^2)^{-1}\\
&=[t^{k-1}](1-t)^{-2}\\
&=k
\end{align*}
This in turn means
\begin{equation*}
\sum_{l=0}^{\lfloor(k-1)/2\rfloor}d^\phi_{k,l,\lfloor(k-1)/2\rfloor}=\frac{(-1)^k}{k}k=(-1)^k
\end{equation*}
For the $\epsilon$ power, since $1+\prt(k+1)+2\lfloor(k-1)/2\rfloor=k$, the continuously extended coefficients therefore satisfy
\begin{equation*}
\left.[\sigma^k]\frac{\phi-\pi/2}{\cos(\psi)}\right|_{\psi=\pi/2}
=(-1)^k\epsilon^k
\end{equation*}
These coefficients coincide with those obtained by first taking the limit and then expanding. Thus the series for $(\phi-\pi/2)/\cos(\psi)$ converges absolutely for $0\leq\rho\leq\rho_c$ when $0\leq\psi<\pi/2$. At $\psi=\pi/2$, it converges absolutely for $0\leq\rho<\rho_c$ and diverges at $\rho=\rho_c$.

\subsection{Region of convergence for series of \texorpdfstring{$(\phi-\pi/2)^i$}{(φ - π/2) to the power i}}\label{sec:conv_powers}
The transformation
\begin{equation*}
F(u) = u^{i}\quad\text{where}\quad i\in\mathbb{N},\quad i\geq 1
\end{equation*}
is a polynomial and therefore introduces no additional singularities.
Let
\begin{equation*}
u(x)=\phi(x)-\frac{\pi}{2}
\end{equation*}
For $0<\psi<\pi/2$, at the nearest evolute critical value $x_c$, the branch $u(x)$ has a genuine square-root singularity:
\begin{equation*}
u(x)=u_c+B\sqrt{x-x_c}+O(x-x_c),\qquad B\neq0
\end{equation*}
This follows from the fact that $y_0(x)$ has a simple square-root branch point and $f'(y_c)\neq0$, so the singularities of $f(y_0(x))$ are not removable.
Now let $G(x)=F(u(x))=u(x)^i$. If $u_c\neq0$
\begin{align}
G(x)&=\left(u_c+B\sqrt{x-x_c}+O(x-x_c)\right)^i\nonumber\\
&=u_c^i+i u_c^{i-1}B\sqrt{x-x_c}+O(x-x_c)\label{eq:power_roc_structure}
\end{align}
Since
\begin{equation*}
i u_c^{i-1}B\neq0
\end{equation*}
the square-root singularity is not removable. It remains to verify that $u_c\neq0$.

At the nearest evolute critical point, corresponding to $\omega=1$, we have
\begin{equation*}
-\frac1z<y_c<0.
\end{equation*}
Since
\begin{equation*}
y=\frac{1}{\tan(\phi)-\tan\psi},\qquad z=\tan(\psi)
\end{equation*}
this gives
\begin{equation*}
\tan(\phi_c)=z+\frac1{y_c}<0
\end{equation*}
The relevant continuation is the real branch starting from
$\phi=\pi/2$ at $y=0$. As $y$ decreases from $0$ to $y_c<0$,
$\tan(\phi)=z+1/y$ passes from $-\infty$ to a finite negative value,
so $\phi_c$ lies in the second quadrant. Hence
\begin{equation*}
\frac{\pi}{2}<\phi_c<\pi,\qquad 0<\phi_c-\frac{\pi}{2}<\frac{\pi}{2}
\end{equation*}
Thus $u_c=\phi_c-\pi/2\neq0$, so $x_c$ remains a genuine square-root singularity of $G(x)$, and the radius of convergence is not enlarged. Since $F$ is a polynomial, $G$ is analytic wherever $u$ is, so $x_c$ is also the unique singularity of $G$ on $|x|=|x_c|$. Hence, the series of $(\phi-\pi/2)^i$ converges inside the evolute and, by the transfer method, absolutely on the evolute boundary.

Furthermore, at the endpoint $\psi=0$, the singularities on the evolute bound $\rho=ae^2$ are at $x=\pm1$, where $u=\mp\pi/2$. The structure~\eqref{eq:power_roc_structure} applies at both points, with square-root coefficients proportional to $i(\mp\pi/2)^{i-1}\neq0$. Consequently, both square-root singularities are preserved under $F(u)=u^i$, and the corresponding series converges absolutely on $|x|=1$, i.e. on the evolute boundary $\rho=ae^2$. Consequently, $(\phi-\pi/2)^i$ has the same region of convergence as $\phi-\pi/2$.

For $\psi=\pi/2$, the original series of $\phi-\pi/2$ vanishes identically, and hence, for $i\geq1$, $u^i=0$ and the series therefore converges trivially for all $\rho$.

Altogether, for $0\leq\psi<\pi/2$, the series of $\left(\phi-\frac{\pi}{2}\right)^i$ converges absolutely up to and including the evolute boundary $\rho\leq\rho_c$ and the evolute remains the convergence boundary. At the exceptional end-point $\psi=\pi/2$, the series converges for all $\rho$.

\subsection{Region of convergence for even power series in \texorpdfstring{$\phi-\pi/2$}{φ - π/2}}\label{sec:even_power_series_convergence}
Let $u(x)=\phi(x)-\pi/2$ and suppose that $F$ has a Taylor expansion about $u=0$,
\begin{equation*}
F(u)=\sum_{i=0}^{\infty} A_i u^{2i},
\qquad |u|<R_F
\end{equation*}

For fixed $0\leq\psi<\pi/2$, let
\begin{equation*}
R=\frac{\rho_c\cos(\psi)}{ae^2}
\end{equation*}
denote the radius of convergence of $u(x)$, and define
\begin{equation*}
K=\{u(x):|x|\leq R\}
\end{equation*}
The previously established absolute convergence on the boundary makes $u$ continuous on the closed disk, so $K$ is compact.

Suppose that $F$, initially defined by its Taylor series at zero, admits a single-valued analytic continuation to an open neighborhood of $K$. Then $F(u(x))$ is analytic for $|x|<R$, and composition with $F$ introduces no additional singularities on $|x|=R$. The Taylor series of $F(u(x))$ about $x=0$ therefore converges throughout $|x|<R$. The same argument applies to any analytic transformation of $u(x)$, whether or not its Taylor series is even, but since the derivation only uses even power series, the discussion is restricted to this case.

For $0<\psi<\pi/2$, at the nearest evolute critical value $x_c$, one has locally
\begin{equation*}
u(x)=u_c+B\sqrt{x-x_c}+O(x-x_c),
\qquad B\neq0
\end{equation*}
Since the assumed continuation of $F$ is analytic at
$u_c$, it follows that
\begin{equation*}
F(u(x))
=
F(u_c)+F'(u_c)B\sqrt{x-x_c}+O(x-x_c)
\end{equation*}
Hence, if
\begin{equation*}
F'(u_c)\neq0
\end{equation*}
the evolute singularity is not removable, and the evolute remains the actual convergence boundary.

More generally, near each dominant singularity $x_c$, $u$ is a convergent series in powers of $(x-x_c)^{1/2}$, and hence so is $F(u(x))$, i.e. $F(u(x))=H_0(x)+(x-x_c)^{1/2}H_1(x)$ with $H_0$, $H_1$ analytic at $x_c$. Whether or not the leading singular term cancels, the singular part is thus $O((x-x_c)^{1/2})$, and the transfer of $O$-terms (Theorem VI.3 of~\cite{Flajolet2009}) gives $a_k=O(R^{-k}k^{-3/2})$. Consequently, the composed Taylor series converges absolutely on $|x|=R$, including the point $x=R$, i.e. $\rho=\rho_c$.

At the endpoint $\psi=0$, one has $R=1$, with
\begin{equation*}
u(1)=-\pi/2,\qquad u(-1)=\pi/2
\end{equation*}
Under the same analyticity assumption on a neighborhood of $K$, the square-root expansions at both endpoints can be composed with $F$. The resulting series therefore converges absolutely on $|x|=1$, including the evolute endpoint $\rho=ae^2$. If $F'(-\pi/2)\neq0$, equivalently $F'(\pi/2)\neq0$ since $F$ is even, the singularities at $x=\pm1$ are preserved and the radius remains $1$.

At the other endpoint, $\psi=\pi/2$, the original series for $\phi-\pi/2$ in $\rho$ vanishes identically. Consequently, $F(u)=F(0)=A_0$, and the series converges for all $\rho\geq0$.

Altogether, a sufficient condition for absolute convergence throughout the closed evolute region is that, for each fixed $0\leq\psi<\pi/2$, $F$ admit the analytic continuation to a neighborhood of $K$ described above. In particular, the stronger condition
\begin{equation*}
R_F>\max_{|x|\leq R}|u(x)|
\end{equation*}
is sufficient. If, in addition, $F'(u_c)\neq0$, the evolute remains the actual convergence boundary for $0<\psi<\pi/2$. Failure of these sufficient conditions requires separate analysis. Additional singularities may restrict convergence, while cancellation may enlarge the convergence region.

\subsection{Region of convergence for series of \texorpdfstring{$\cos(\phi)$, $\sin(\phi)$, $\cos(\phi)/\cos(\psi)$, and $\sin(\phi)-1$}{cos(φ), sin(φ), cos(φ)/cos(ψ), and sin(φ) - 1}}
Let $u=\phi-\frac{\pi}{2}$. Then
\begin{equation*}
\sin(\phi)=\cos(u),
\qquad
\cos(\phi)=-\sin(u)
\end{equation*}
The functions $\cos(u)$ and $-\sin(u)$ are entire, so the condition of Section~\ref{sec:even_power_series_convergence} holds trivially.
For $0<\psi<\pi/2$, at the nearest evolute singularity $0<u_c<\frac{\pi}{2}$
\begin{equation*}
\cos'(u_c)=-\sin(u_c)\neq0,\qquad-\sin'(u_c)=-\cos(u_c)\neq0
\end{equation*}
Hence the square-root singularity of $u(x)$ is not removable under either transformation. Further, for fixed $0<\psi<\pi/2$, $1/\cos(\psi)$ is a finite constant and therefore does not affect the convergence of the series for $\cos(\phi)/\cos(\psi)$. Subtracting the constant 1 likewise does not affect the convergence of the series for $\sin(\phi)-1$. Hence all four series have the same convergence bound as the series for $u=\phi-\pi/2$. By Section~\ref{sec:even_power_series_convergence}, they converge absolutely on the evolute boundary.

It remains to consider the endpoints. For $\psi=0$, $z=0$ and
\begin{equation*}
y(x)=\frac{\epsilon x}{\sqrt{1-x^2}},
\qquad
\tan(\phi)=\frac{\sqrt{1-x^2}}{\epsilon x}
\end{equation*}
Consequently,
\begin{equation*}
\sin(\phi)
=
\frac{\sqrt{1-x^2}}{\sqrt{1-e^2x^2}},
\qquad
\cos(\phi)
=
\frac{\epsilon x}{\sqrt{1-e^2x^2}}
\end{equation*}

For $\sin(\phi)$, the nearest singularities are the square-root branch points at $x=\pm1$. The transfer argument therefore gives absolute convergence on $|x|=1$, including the evolute endpoint $x=1$, or $\rho=ae^2$. The same conclusions hold for $\sin(\phi)-1$.


For \(\cos(\phi)\), the derivative \(-\cos(u)\) vanishes at \(u=\mp\pi/2\), so the leading square-root terms disappear. In fact, the singularities are completely cancelled, as the explicit expression $\cos(\phi)=\frac{\epsilon x}{\sqrt{1-e^2x^2}}$ shows by being analytic at $x=1$. Hence the evolute is not the convergence boundary for the series for $\cos(\phi)$ in the direction $\psi=0$. Its nearest singularities occur at $x=\pm\frac{1}{e}$, so the radius of convergence is $1/e$, corresponding to $\rho=ae$ which is always larger than the evolute bound.
At $x=1/e$, the binomial expansion gives the positive terms
\begin{equation*}
\frac{\epsilon}{e}\frac{\binom{2m}{m}}{4^m}
\sim\frac{\epsilon}{e\sqrt{\pi m}}
\end{equation*}
Their sum diverges. Thus the radius of convergence is $1/e$ in $x$, or equivalently $ae$ in $\rho$, i.e. the series converges for $\rho<ae$ and diverges at $\rho=ae$.
Since $\cos(\psi)=1$ at $\psi=0$, the same result holds for $\cos(\phi)/\cos(\psi)$.

At the other endpoint, $\psi=\pi/2$, the original series satisfies
\begin{equation*}
u=\phi-\frac{\pi}{2}=0
\end{equation*}
identically. Hence
\begin{equation*}
\sin(\phi)=1,
\qquad
\cos(\phi)=0
\end{equation*}
and the series of $\cos(\phi)$, $\sin(\phi)$ and $\sin(\phi)-1$ converge trivially for all $\rho$. However, at $\psi=\pi/2$, both $\cos(\phi)$ and $\cos(\psi)$ vanish, so the ratio $\cos(\phi)/\cos(\psi)$ must be treated by taking the limit $\psi\to\pi/2$. From~\eqref{eq:coscos_limit}
\begin{equation*}
\left.\frac{\cos(\phi)}{\cos(\psi)}\right|_{\psi=\pi/2}
=
\frac{\epsilon\rho}{ae^2+\epsilon\rho}
\end{equation*}
Introducing
\begin{equation*}
s=\frac{\epsilon\rho}{ae^2}
\end{equation*}
the corresponding series becomes
\begin{equation*}
\frac{s}{1+s}
=
\sum_{k=1}^{\infty}(-1)^{k-1}s^k,
\qquad |s|<1
\end{equation*}
Hence the radius of convergence in $\rho$ is $\frac{ae^2}{\epsilon}$ so the series converges for $\rho<\frac{ae^2}{\epsilon}$, which coincides with the evolute bound at $\psi=\pi/2$. At the boundary $\rho=ae^2/\epsilon$, however, $s=1$ and the series reduces to $1-1+1-\cdots$, whose terms do not tend to zero. Thus, at $\psi=\pi/2$, the series for $\cos(\phi)/\cos(\psi)$ converges inside the evolute but diverges at the evolute boundary, even though the function itself remains finite there.

Thus, for $0<\psi<\pi/2$, the series for $\cos(\phi)$, $\sin(\phi)$, $\cos(\phi)/\cos(\psi)$, and $\sin(\phi)-1$ converge absolutely up to and including the evolute boundary.
At $\psi=0$, the series for $\sin(\phi)$ and $\sin(\phi)-1$ likewise converge absolutely on the evolute boundary. In contrast, for $\cos(\phi)$ and $\cos(\phi)/\cos(\psi)$ the evolute singularity is cancelled and the convergence region extends to $\rho<ae$ with both series diverging at $\rho=ae$. However, beyond $\rho=ae^2$, this convergent series represents the analytic continuation of the expanded branch and not the closest-point. See Section~\ref{sec:validity_beyond} for details.
At the exceptional endpoint $\psi=\pi/2$, the series of $\sin(\phi)$, $\sin(\phi)-1$, and $\cos(\phi)$ all converge for all $\rho$ whereas the series of $\cos(\phi)/\cos(\psi)$ converges for $\rho<ae^2/\epsilon$ but diverges at the evolute boundary $\rho=ae^2/\epsilon$.

\subsection{Region of convergence for series of \texorpdfstring{$\sin(\psi)/\sin(\phi)$}{sin(ψ)/sin(φ)}}
Since
\begin{equation*}
\sin(\phi)=\cos(\phi-\pi/2)=\cos(u),
\qquad
u=\phi-\frac{\pi}{2}
\end{equation*}
for $0<\psi<\pi/2$, the function
\begin{equation*}
F(u)=\frac{\sin(\psi)}{\cos(u)}
\end{equation*}
has poles at
\begin{equation*}
u=\frac{\pi}{2} + k\pi,\qquad k\in\mathbb{Z}
\end{equation*}
The poles nearest to zero are $u=\pm\pi/2$, so radius of convergence of its Taylor series is
\begin{equation*}
R_F=\frac{\pi}{2}
\end{equation*}
Since $\max_{|x|\leq R}|u(x)|<R_F$ is not readily verified, the neighborhood condition of Section~\ref{sec:even_power_series_convergence} is used instead.

For $0<\psi<\pi/2$, a pole would require $\cos(u(x_\ast))=0$ and, hence, $\phi(x_\ast)=k\pi$ for some integer $k$. By its definition
\begin{equation*}
y_0(x_\ast)=\frac{1}{\tan(\phi(x_\ast))-z}=-\frac{1}{z}
\end{equation*}
However, from~\eqref{eq:one_over_z_is_zero}, $x(-1/z)=0$, but the selected branch satisfies $y_0(0)=0$. This is a contradiction and consequently $\cos(u(x))\neq0$ throughout the closed convergence disk.
Thus the compact set $K$ defined in Section~\ref{sec:even_power_series_convergence} contains no pole of $F$. Consequently, $F$ is analytic on an open neighborhood of $K$, and the composition result establishes absolute convergence up to and including the evolute boundary.

Moreover, at the nearest evolute singularity of $u(x)$,
\begin{equation*}
0<u_c<\frac{\pi}{2}
\end{equation*}
so $F$ is analytic at $u_c$. Further,
\begin{equation*}
F'(u_c)
=
\sin(\psi)\frac{\sin(u_c)}{\cos^2(u_c)}\neq0
\end{equation*}
Hence the square-root singularity of $u(x)$ is not removable under the transformation $F$, and the evolute remains the convergence boundary. 

It remains to consider the endpoints. At $\psi=0$,
\begin{equation*}
\sin(\psi)=0,
\end{equation*}
so all coefficients of the Taylor series
\begin{equation*}
\frac{\sin(\psi)}{\cos(u)}
=
\sin(\psi)\sum_{i=0}^{\infty}\frac{E_{2i}}{(2i)!}u^{2i}
\end{equation*}
vanish. Consequently, the series is identically zero and converges trivially for all $\rho$. This agrees with the continuous value of $\sin(\psi)/\sin(\phi)$ inside and on the evolute, including the indeterminate endpoint $\rho=ae^2$, where the limiting value is zero. The zero limit at $\rho=ae^2$ follows from
$\sin(\psi)/\sin(\phi)=(\epsilon^2N+h)/\rho$, since $N\to a$ and $h\to-a\epsilon^2$ at this cusp. Beyond the evolute the physical solution changes branch, and the identically zero series no longer represents the continuous extension of the quotient.

At the other endpoint, $\psi=\pi/2$, the original series satisfies
\begin{equation*}
u=\phi-\frac{\pi}{2}=0
\end{equation*}
identically, while $\sin(\psi)=1$. Hence
\begin{equation*}
\frac{\sin(\psi)}{\sin(\phi)}=1
\end{equation*}
and the corresponding series again converges trivially for all $\rho$.

Thus, for $0<\psi<\pi/2$, the series for $\sin(\psi)/\sin(\phi)$ has the evolute as its convergence boundary and converges absolutely up to and including that boundary. At both endpoints $\psi=0$ and $\psi=\pi/2$, the series degenerates to a constant series and converges for all $\rho$.

\subsection{Region of convergence for series of \texorpdfstring{$\epsilon^2N/a$}{ε²N/a}}\label{sec:roc_N}
Writing $\epsilon^2N/a$ as a function of $u=\phi-\pi/2$ gives
\begin{equation*}
\frac{\epsilon^2N}{a}
=
\frac{\epsilon^2}{\sqrt{1-e^2\sin^2(\phi)}}
=
\frac{\epsilon^2}{\sqrt{1-e^2\cos^2(u)}}
\end{equation*}
Let
\begin{equation*}
F(u)
=
\frac{\epsilon^2}{\sqrt{1-e^2\cos^2(u)}}, \qquad F(0)=\epsilon
\end{equation*}
The nearest singularities of $F(u)$ occur when
\begin{equation*}
1-e^2\cos^2(u)=0
\end{equation*}
i.e. at
\begin{equation*}
u=\pm i\operatorname{arcosh}(1/e)
\end{equation*}
However, these are singularities of $F(u)$, and constitute finite convergence bounds for $F(u(x))$ only if they are reached at finite values of $x$.

Since
\begin{equation*}
\tan(\phi)=z+\frac1y
\end{equation*}
and
\begin{equation*}
\sin^2(\phi)
=
\frac{\tan^2(\phi)}{1+\tan^2(\phi)}
\end{equation*}
the singularity condition can equivalently be written as
\begin{equation*}
1-e^2\sin^2(\phi)=0
\qquad\Longleftrightarrow\qquad
1+\epsilon^2\tan^2(\phi)=0
\end{equation*}
or
\begin{equation*}
1+\epsilon^2\left(z+\frac1y\right)^2=0
\end{equation*}
Multiplication by $y^2$ gives
\begin{equation*}
y^2+\epsilon^2(1+zy)^2=0
\end{equation*}
which is precisely the condition in~\eqref{eq:values_at_infinity}. These points are mapped to $x=\infty$, and hence the singularities of $F(u)$ introduce no additional finite convergence bound.
Hence $1-e^2\cos^2(u(x))$ is nonzero on the closed convergence disk. Its square root therefore has an analytic branch along $u(x)$, specified by $F(0)=\epsilon$, wherever $u$ is analytic. Near $x_c$, $F$ is analytic at $u_c$, so the local argument of Section~\ref{sec:even_power_series_convergence} applies. This replaces the single-valuedness assumption of that section.

For $0<\psi<\pi/2$, the nearest finite singularity therefore remains the evolute singularity of $u(x)$. At this point,
\begin{equation*}
0<u_c<\frac{\pi}{2}
\end{equation*}
and
\begin{equation*}
F'(u_c)=-\frac{\epsilon^2e^2\sin(u_c)\cos(u_c)}
{\left(1-e^2\cos^2(u_c)\right)^{3/2}}
\neq 0
\end{equation*}
Since $1-e^2\cos^2(u_c)>0$, the continued branch of $F$ is analytic near $u_c$. Together with $F'(u_c)\neq 0$, this shows that the square-root singularity of $u(x)$ is not removable under the transformation $F$. The evolute therefore remains the convergence boundary and, by the same transfer argument as before, the series converges absolutely on the evolute boundary.

It remains to consider the endpoints. For $\psi=0$, using
\begin{equation*}
\sin(\phi)
=
\frac{\sqrt{1-x^2}}{\sqrt{1-e^2x^2}}
\end{equation*}
gives
\begin{align*}
\frac{\epsilon^2N}{a}
&=
\frac{\epsilon^2}
{\sqrt{1-e^2\sin^2(\phi)}}
=
\epsilon\sqrt{1-e^2x^2}
\end{align*}
Thus the square-root singularity of $u(x)$ at the evolute point $x=1$ is cancelled. The nearest singularities of the transformed function instead occur at
\begin{equation*}
x=\pm\frac1e,
\end{equation*}
corresponding to
\begin{equation*}
\rho=ae.
\end{equation*}
Both singularities are of square-root type
\begin{equation*}
\frac{\epsilon^2N}{a}
\sim C_+(1-ex)^{1/2}
\quad\text{as }x\to1/e,
\qquad C_+\neq0
\end{equation*}
and
\begin{equation*}
\frac{\epsilon^2N}{a}
\sim C_-(1+ex)^{1/2}
\quad\text{as }x\to-1/e,
\qquad C_-\neq0
\end{equation*}
The transfer argument therefore gives Taylor coefficients
of order\newline $O(e^k k^{-3/2})$, and hence absolute convergence on $|x|=1/e$, including the physical boundary $\rho=ae$. Hence, for $\psi=0$, the convergence region is
\begin{equation*}
\rho\leq ae.
\end{equation*}

At the other endpoint, $\psi=\pi/2$, one has $u=0$ identically and therefore
\begin{equation*}
\frac{\epsilon^2N}{a}
=
F(0)
=
\epsilon
\end{equation*}
The corresponding series is constant and converges trivially for all $\rho$.

Thus, for $0<\psi<\pi/2$, the series for $\epsilon^2N/a$ converges absolutely up to and including the evolute boundary. At $\psi=0$, cancellation of the evolute singularities at $x=\pm1$ extends the convergence region to $\rho\leq ae$. However, beyond $\rho=ae^2$, this convergent series represents the analytic continuation of the expanded branch and not the closest point, see Section~\ref{sec:validity_beyond}. At the exceptional endpoint $\psi=\pi/2$, the series converges for all $\rho$.

\subsection{Region of convergence for series of \texorpdfstring{$h$}{h}}\label{sec:conv_h}
The series for $h$ is obtained from
\begin{equation*}
\frac{h}{a}
=\frac{\rho}{a}\frac{\sin(\psi)}{\sin(\phi)}-\frac{\epsilon^2N}{a}
\end{equation*}
Hence, for $0<\psi<\pi/2$, the series converges absolutely up to and including the evolute boundary, since both component series do so. However, since singularities of the two terms may cancel, this alone does not establish that the evolute is the actual convergence boundary for $h$.

To examine whether cancellation extends the convergence radius, write $H=h/a$. Multiplying the base relations \eqref{eq:base1} and \eqref{eq:base2} by $\cos(\phi)$ and $\sin(\phi)$, respectively, and adding gives
\begin{equation*}
H=\frac{\rho}{a}\cos(\phi-\psi)
-\sqrt{1-e^2\sin^2(\phi)}
\end{equation*}
Using $x=\rho\cos(\psi)/(ae^2)$ and $z=\tan(\psi)$,
this becomes
\begin{equation*}
H=e^2x(\cos(\phi)+z\sin(\phi))-\sqrt{\cos^2(\phi)+\epsilon^2\sin^2(\phi)}
\end{equation*}
The question is whether $H$ is analytic at the nearest critical value
\begin{equation*}
x_c=-\frac{1}{(1+\epsilon^{2/3}z^{2/3})^{3/2}}
\end{equation*}
We consider $H$ on the branch induced by $y_0(x)$, selected by $y_0(0)=0$ and $\phi(0)=\pi/2$. As previously established, along the continuation $x_c\leq x<0$, one has $-1/z<y_c\leq y_0(x)<0$. Consequently, $\tan(\phi)=z+1/y_0(x)<0$, and continuity from $\phi(0)=\pi/2$ selects $\pi/2<\phi<\pi$. Thus, with $t=\tan(\phi)$
\begin{equation*}
\cos(\phi)=-\frac{1}{\sqrt{1+t^2}},\qquad\sin(\phi)=-\frac{t}{\sqrt{1+t^2}}
\end{equation*}
and
\begin{equation*}
H=-e^2x\frac{1+zt}{\sqrt{1+t^2}}-\frac{\sqrt{1+\epsilon^2t^2}}{\sqrt{1+t^2}}
\end{equation*}
The square roots are positive along this real continuation. Further, $y=1/(t-z)$. Substituting into $x(y)$ gives
\begin{equation*}
x(t)=-\frac{t}{(t-z)\sqrt{1+\epsilon^2t^2}}
\end{equation*}
In terms of $t$, the critical point is
\begin{equation*}
t_c=z+\frac1{y_c}
=-\left(\frac{z}{\epsilon^2}\right)^{1/3}<0
\end{equation*}
%
Along the studied branch,
\begin{equation*}
\frac{\partial H}{\partial t}
=
\frac{e^2}{(1+t^2)^{3/2}}
\left(
-x(z-t)+\frac{t}{\sqrt{1+\epsilon^2t^2}}
\right)=0.
\end{equation*}
Consequently, away from $x_c$, the chain rule gives
\begin{equation*}
\frac{dH}{dx}=\frac{\partial H}{\partial x}+\frac{\partial H}{\partial t}\frac{dt}{dx}=-e^2\frac{1+zt}{\sqrt{1+t^2}}
\end{equation*}
Differentiating once more along the selected branch gives
\begin{equation*}
\frac{d^2H}{dx^2}
=
\frac{d}{dt}
\left(-e^2\frac{1+zt}{\sqrt{1+t^2}}\right)
\frac{dt}{dx}
=
\frac{-e^2(z-t)/(1+t^2)^{3/2}}{dx/dt}
\end{equation*}
where
\begin{equation*}
\frac{dx}{dt}
=
\frac{z+\epsilon^2t^3}{(t-z)^2(1+\epsilon^2t^2)^{3/2}}
\end{equation*}
As $t\to t_c$, the denominator tends to zero, whereas the numerator tends to
\begin{equation*}
-e^2\frac{z-t_c}{(1+t_c^2)^{3/2}}\neq0
\end{equation*}
since $z>0$ and $t_c<0$. Consequently, $\left|d^2H/dx^2\right|\to\infty$
as $x\to x_c$ along the selected branch. Therefore, $H$ is not analytic at $x_c$, and the evolute remains the convergence boundary.
Note that $dH/dx$ remains finite as $x\to x_c$, so the square-root terms of the two components cancel and the leading singular term of $H$ is of order $(x-x_c)^{3/2}$.

Absolute convergence on the evolute boundary follows from the already established absolute convergence of the two component series in
\begin{equation*}
H=\frac{\rho}{a}\frac{\sin(\psi)}{\sin(\phi)}-\frac{\epsilon^2N}{a}
\end{equation*}

At the endpoint $\psi=0$, the series for $\sin(\psi)/\sin(\phi)$ vanishes identically, while the series for $\epsilon^2N/a=\epsilon\sqrt{1-e^2x^2}$ converges absolutely up to and including its extended boundary $\rho=ae$. Hence the series for $h$ likewise converges absolutely for
\begin{equation*}
\rho\leq ae
\end{equation*}

At the other endpoint, $\psi=\pi/2$, one has
\begin{equation*}
\frac{\sin(\psi)}{\sin(\phi)}=1,
\qquad
\frac{\epsilon^2N}{a}=\epsilon
\end{equation*}
so
\begin{equation*}
\frac{h}{a}=\frac{\rho}{a}-\epsilon
\end{equation*}
The series is therefore affine in $\rho$ and converges for all $\rho$.

Thus, for $0<\psi<\pi/2$, the evolute remains the convergence boundary for the series for $h$, and the series converges absolutely up to and including that boundary. At $\psi=0$, its convergence region extends to $\rho\leq ae$, although beyond $\rho=ae^2$ the sum represents the analytic continuation of the expanded branch and not the closest point, see Section~\ref{sec:validity_beyond}. At $\psi=\pi/2$ it converges for all $\rho$.

\subsection{Validity of convergent series beyond the evolute}\label{sec:validity_beyond}
The preceding subsections determined the regions of convergence for the various series. However, convergence of a transformed series beyond the evolute does not necessarily imply that its sum represents the geometrically defined closest-point quantity.

This distinction is particularly clear at the endpoint $\psi=0$. In this direction
\begin{equation*}
x=\frac{\rho}{ae^2}
\end{equation*}
and the evolute is reached at
\begin{equation*}
\rho=ae^2,\qquad x=1
\end{equation*}
For $\rho>ae^2$, the geometrically selected closest point lies at the end of the major axis, so
\begin{equation*}
\phi_{\mathrm{cp}}=0,
\qquad
\cos(\phi_{\mathrm{cp}})=1,
\qquad
\frac{\epsilon^2N_{\mathrm{cp}}}{a}=\epsilon^2,
\qquad
h_{\mathrm{cp}}=\rho-a
\end{equation*}

The series for $\cos(\phi)$, however, converges beyond the evolute for $\rho<ae$ and sums to
\begin{equation*}
\cos(\phi)_{\mathrm{ser}}
= \frac{\epsilon x}{\sqrt{1-e^2x^2}}
=
\frac{\epsilon\rho/(ae^2)}
{\sqrt{1-\rho^2/(a^2e^2)}}
\end{equation*}
This agrees with the closest-point value at the evolute, where it equals $1$, but for
\begin{equation*}
ae^2<\rho<ae
\end{equation*}
it no longer equals $\cos(\phi_{\mathrm{cp}})=1$. Thus the convergent series represents the analytic continuation of the original branch rather than the closest-point value.

Similarly, the series for $\epsilon^2N/a$ converges up to and including $\rho=ae$ and sums to
\begin{equation*}
\left(\frac{\epsilon^2N}{a}\right)_{\mathrm{ser}}
=\epsilon\sqrt{1-e^2x^2}
=\epsilon\sqrt{1-\frac{\rho^2}{a^2e^2}}
\end{equation*}
At the evolute this equals $\epsilon^2$, as required, but beyond the evolute it differs from the closest-point value
\begin{equation*}
\left(\frac{\epsilon^2N}{a}\right)_{\mathrm{cp}}
=
\epsilon^2
\end{equation*}

At $\psi=0$, the factor $\sin(\psi)$ makes the series for
\begin{equation*}
\frac{\sin(\psi)}{\sin(\phi)}
\end{equation*}
identically zero. It therefore converges to zero for all $\rho$. This agrees with the limiting closest-point quantity up to and including the evolute. Beyond the evolute, however, taking the closest-point solution first and then approaching the axis gives
\begin{equation*}
\lim_{\psi\to0}\frac{\sin(\psi)}{\sin(\phi_{\mathrm{cp}})}
=
1-\frac{ae^2}{\rho},\qquad\rho>ae^2
\end{equation*}
which is not zero. Thus the identically zero endpoint series also ceases to represent the closest-point continuation beyond the evolute.

The same phenomenon consequently occurs for $h$. At $\psi=0$, its series reduces to
\begin{equation*}
h_{\mathrm{ser}}
=
-a\epsilon\sqrt{1-\frac{\rho^2}{a^2e^2}},\qquad 0\leq\rho\leq ae
\end{equation*}
At the evolute,
\begin{equation*}
h_{\mathrm{ser}}(ae^2)
= -a\epsilon^2
= ae^2-a
= h_{\mathrm{cp}}(ae^2)
\end{equation*}
so the two branches join continuously. For
\begin{equation*}
ae^2<\rho\leq ae
\end{equation*}
however,
\begin{equation*}
h_{\mathrm{ser}}\neq h_{\mathrm{cp}}=\rho-a
\end{equation*}
Thus the extended convergence of the series for $h$ at $\psi=0$ is an analytic continuation and not an extension of its closest-point validity.

The endpoint $\psi=\pi/2$ is different. By symmetry, the closest-point normal remains aligned with the $y$-axis, and therefore
\begin{equation*}
\phi-\frac{\pi}{2}=0,
\qquad
\sin(\phi)-1=0,
\qquad
\cos(\phi)=0
\end{equation*}
for all $\rho$. The corresponding zero series consequently remain both convergent and equal to the closest-point quantities beyond the evolute. Likewise,
\begin{equation*}
h=\rho-a\epsilon
\end{equation*}
on this axis, so the affine series for $h$ remains the closest-point value for all $\rho$.

Quantities containing a factor $1/\cos(\psi)$ require a different interpretation at $\psi=\pi/2$, since their direct expressions are of the indeterminate form $0/0$. They may instead be assigned values by continuous transverse limits. For example,
\begin{equation*}
\lim_{\psi\to\pi/2}\frac{\cos(\phi)}{\cos(\psi)}
=
\frac{\epsilon\rho}{ae^2+\epsilon\rho}
\end{equation*}
and, since $\cos(\phi)\sim-(\phi-\pi/2)$,
\begin{equation*}
\lim_{\psi\to\pi/2}\frac{\phi-\pi/2}{\cos(\psi)}
=
-\frac{\epsilon\rho}{ae^2+\epsilon\rho}
\end{equation*}
These are not direct values of the original quotients at $\psi=\pi/2$, but continuously extended limiting quantities. Their Taylor series about $\rho=0$ have radius of convergence
\begin{equation*}
\frac{ae^2}{\epsilon}
\end{equation*}
and diverge at the evolute boundary even though the limiting functions themselves remain finite there.

Consequently, convergence and closest-point validity must be distinguished. On the major axis, several transformed series converge beyond the evolute but cease to represent the closest-point quantities there. On the minor axis, the constant series and the affine series for $h$ remain equal to their closest-point values for all $\rho$. The normalized angular quantities are interpreted through continuous transverse limits, whose Taylor series converge only strictly inside the evolute bound. The regions of convergence and the regions in which the series converge to the closest-point quantities are summarized in Tables~\ref{tab:convergence_regions} and~\ref{tab:closest_point_validity}, respectively.

\begin{table}[ht]
\centering
\caption{Regions of convergence of the series.
Here $\rho\geq0$, and $\rho_c$ denotes the radial evolute
bound. All included boundaries have absolute convergence.}
\small
\setlength{\tabcolsep}{4pt}
\label{tab:convergence_regions}
\begin{tabular}{c|c|c|c|c|c|c}
\hline
& $\phi-\pi/2$
& $(\phi\!-\!\pi/2)/\!\cos(\psi)$
& $\sin(\phi)\!-\!1$
& $\cos(\phi)$
& $\cos(\phi)/\!\cos(\psi)$
& $h$ \\
\hline
$0\!<\!\psi\!<\!\pi/2$
& $\rho\leq\rho_c$
& $\rho\leq\rho_c$
& $\rho\leq\rho_c$
& $\rho\leq\rho_c$
& $\rho\leq\rho_c$
& $\rho\leq\rho_c$
\\[1mm]
$\psi=0$
& $\rho\leq ae^2$
& $\rho\leq ae^2$
& $\rho\leq ae^2$
& $\rho<ae$
& $\rho<ae$
& $\rho\leq ae$
\\[1mm]
$\psi=\pi/2$
& $\text{all }\rho$
& $\rho<ae^2/\epsilon$
& $\text{all }\rho$
& $\text{all }\rho$
& $\rho<ae^2/\epsilon$
& $\text{all }\rho$
\\
\hline
\end{tabular}
\end{table}

\begin{table}[ht]
\centering
\caption{Regions in which the series converge to the closest-point quantities, with the upper nearest point selected on the major-axis segment. Here $\rho\geq0$. At $\psi=\pi/2$, quotients containing $1/\cos(\psi)$ are interpreted through continuous transverse limits. 
}
\small
\setlength{\tabcolsep}{4pt}
\label{tab:closest_point_validity}
\begin{tabular}{c|c|c|c|c|c|c}
\hline
& $\phi-\pi/2$
& $(\phi\!-\!\pi/2)/\!\cos(\psi)$
& $\sin(\phi)\!-\!1$
& $\cos(\phi)$
& $\cos(\phi)/\!\cos(\psi)$
& $h$ \\
\hline
$0\!<\!\psi\!<\!\pi/2$
& $\rho\leq\rho_c$
& $\rho\leq\rho_c$
& $\rho\leq\rho_c$
& $\rho\leq\rho_c$
& $\rho\leq\rho_c$
& $\rho\leq\rho_c$
\\[1mm]
$\psi=0$
& $\rho\leq ae^2$
& $\rho\leq ae^2$
& $\rho\leq ae^2$
& $\rho\leq ae^2$
& $\rho\leq ae^2$
& $\rho\leq ae^2$
\\[1mm]
$\psi=\pi/2$
& $\text{all }\rho$
& $\rho<ae^2/\epsilon$
& $\text{all }\rho$
& $\text{all }\rho$
& $\rho<ae^2/\epsilon$
& $\text{all }\rho$
\\
\hline
\end{tabular}
\end{table}

\newpage
\section{Tabulated coefficients}\label{sec::tabulated_coefficients}
The rational series expansion coefficients, i.e. \eqref{eq:d_phi}, \eqref{eq:d_h}, \eqref{eq:d_sin}, \eqref{eq:d_cos}, are tabulated in Tables~\ref{tab:d_phi_evo}--\ref{tab:d_cos_phi_evo}. 

\begin{table}
\scriptsize
\begin{tabular}{@{\hspace{0mm}} l@{\hspace{0mm}} c@{\hspace{0.5mm}} | @{\hspace{0.5mm}} l@{}  l@{}  l@{}  l@{}  l@{}  l@{} }
$k$ & $l\setminus n$ & $\phantom{{-}}\epsilon^{1}$ & $\phantom{{-}}\epsilon^{3}$ & $\phantom{{-}}\epsilon^{5}$ & $\phantom{{-}}\epsilon^{7}$ & $\phantom{{-}}\epsilon^{9}$ & $\phantom{{-}}\epsilon^{11}$ \\ \hline\rule{0pt}{\dimexpr\ht\strutbox+0.8mm\relax}
 $\sigma^{1}$ & 1  & $-1$  & $\phantom{-}\times$  & $\phantom{-}\times$  & $\phantom{-}\times$  & $\phantom{-}\times$  & $\phantom{-}\times$  \\[0.4mm]
\noalign{\vskip 1.5mm} & & $\phantom{{-}}\epsilon^{2}$ & $\phantom{{-}}\epsilon^{4}$ & $\phantom{{-}}\epsilon^{6}$ & $\phantom{{-}}\epsilon^{8}$ & $\phantom{{-}}\epsilon^{10}$ & $\phantom{{-}}\epsilon^{12}$ \\ \hline\rule{0pt}{\dimexpr\ht\strutbox+0.8mm\relax}
 $\sigma^{2}$ & $|\sin(\psi)|^{1}$  & $\phantom{-}1$  & $\phantom{-}\times$  & $\phantom{-}\times$  & $\phantom{-}\times$  & $\phantom{-}\times$  & $\phantom{-}\times$  \\[0.4mm]
\noalign{\vskip 1.5mm} & & $\phantom{{-}}\epsilon^{1}$ & $\phantom{{-}}\epsilon^{3}$ & $\phantom{{-}}\epsilon^{5}$ & $\phantom{{-}}\epsilon^{7}$ & $\phantom{{-}}\epsilon^{9}$ & $\phantom{{-}}\epsilon^{11}$ \\ \hline\rule{0pt}{\dimexpr\ht\strutbox+0.8mm\relax}
 $\sigma^{3}$ & 1  & $-\frac{1}{2}$  & $\phantom{-}\frac{1}{3}$  & $\phantom{-}\times$  & $\phantom{-}\times$  & $\phantom{-}\times$  & $\phantom{-}\times$  \\[0.3mm]
 & $|\sin(\psi)|^{2}$  & $\phantom{-}\frac{1}{2}$  & $-\frac{4}{3}$  & $\phantom{-}\times$  & $\phantom{-}\times$  & $\phantom{-}\times$  & $\phantom{-}\times$  \\[0.4mm]
\noalign{\vskip 1.5mm} & & $\phantom{{-}}\epsilon^{2}$ & $\phantom{{-}}\epsilon^{4}$ & $\phantom{{-}}\epsilon^{6}$ & $\phantom{{-}}\epsilon^{8}$ & $\phantom{{-}}\epsilon^{10}$ & $\phantom{{-}}\epsilon^{12}$ \\ \hline\rule{0pt}{\dimexpr\ht\strutbox+0.8mm\relax}
 $\sigma^{4}$ & $|\sin(\psi)|^{1}$  & $\phantom{-}2$  & $-1$  & $\phantom{-}\times$  & $\phantom{-}\times$  & $\phantom{-}\times$  & $\phantom{-}\times$  \\[0.3mm]
 & $|\sin(\psi)|^{3}$  & $-2$  & $\phantom{-}2$  & $\phantom{-}\times$  & $\phantom{-}\times$  & $\phantom{-}\times$  & $\phantom{-}\times$  \\[0.4mm]
\noalign{\vskip 1.5mm} & & $\phantom{{-}}\epsilon^{1}$ & $\phantom{{-}}\epsilon^{3}$ & $\phantom{{-}}\epsilon^{5}$ & $\phantom{{-}}\epsilon^{7}$ & $\phantom{{-}}\epsilon^{9}$ & $\phantom{{-}}\epsilon^{11}$ \\ \hline\rule{0pt}{\dimexpr\ht\strutbox+0.8mm\relax}
 $\sigma^{5}$ & 1  & $-\frac{3}{8}$  & $\phantom{-}\frac{1}{2}$  & $-\frac{1}{5}$  & $\phantom{-}\times$  & $\phantom{-}\times$  & $\phantom{-}\times$  \\[0.3mm]
 & $|\sin(\psi)|^{2}$  & $\phantom{-}\frac{3}{4}$  & $-6$  & $\phantom{-}\frac{12}{5}$  & $\phantom{-}\times$  & $\phantom{-}\times$  & $\phantom{-}\times$  \\[0.3mm]
 & $|\sin(\psi)|^{4}$  & $-\frac{3}{8}$  & $\phantom{-}\frac{11}{2}$  & $-\frac{16}{5}$  & $\phantom{-}\times$  & $\phantom{-}\times$  & $\phantom{-}\times$  \\[0.4mm]
\noalign{\vskip 1.5mm} & & $\phantom{{-}}\epsilon^{2}$ & $\phantom{{-}}\epsilon^{4}$ & $\phantom{{-}}\epsilon^{6}$ & $\phantom{{-}}\epsilon^{8}$ & $\phantom{{-}}\epsilon^{10}$ & $\phantom{{-}}\epsilon^{12}$ \\ \hline\rule{0pt}{\dimexpr\ht\strutbox+0.8mm\relax}
 $\sigma^{6}$ & $|\sin(\psi)|^{1}$  & $\phantom{-}3$  & $-3$  & $\phantom{-}1$  & $\phantom{-}\times$  & $\phantom{-}\times$  & $\phantom{-}\times$  \\[0.3mm]
 & $|\sin(\psi)|^{3}$  & $-6$  & $\phantom{-}16$  & $-\frac{16}{3}$  & $\phantom{-}\times$  & $\phantom{-}\times$  & $\phantom{-}\times$  \\[0.3mm]
 & $|\sin(\psi)|^{5}$  & $\phantom{-}3$  & $-13$  & $\phantom{-}\frac{16}{3}$  & $\phantom{-}\times$  & $\phantom{-}\times$  & $\phantom{-}\times$  \\[0.4mm]
\noalign{\vskip 1.5mm} & & $\phantom{{-}}\epsilon^{1}$ & $\phantom{{-}}\epsilon^{3}$ & $\phantom{{-}}\epsilon^{5}$ & $\phantom{{-}}\epsilon^{7}$ & $\phantom{{-}}\epsilon^{9}$ & $\phantom{{-}}\epsilon^{11}$ \\ \hline\rule{0pt}{\dimexpr\ht\strutbox+0.8mm\relax}
 $\sigma^{7}$ & 1  & $-\frac{5}{16}$  & $\phantom{-}\frac{5}{8}$  & $-\frac{1}{2}$  & $\phantom{-}\frac{1}{7}$  & $\phantom{-}\times$  & $\phantom{-}\times$  \\[0.3mm]
 & $|\sin(\psi)|^{2}$  & $\phantom{-}\frac{15}{16}$  & $-15$  & $\phantom{-}12$  & $-\frac{24}{7}$  & $\phantom{-}\times$  & $\phantom{-}\times$  \\[0.3mm]
 & $|\sin(\psi)|^{4}$  & $-\frac{15}{16}$  & $\phantom{-}\frac{225}{8}$  & $-40$  & $\phantom{-}\frac{80}{7}$  & $\phantom{-}\times$  & $\phantom{-}\times$  \\[0.3mm]
 & $|\sin(\psi)|^{6}$  & $\phantom{-}\frac{5}{16}$  & $-\frac{55}{4}$  & $\phantom{-}\frac{57}{2}$  & $-\frac{64}{7}$  & $\phantom{-}\times$  & $\phantom{-}\times$  \\[0.4mm]
\noalign{\vskip 1.5mm} & & $\phantom{{-}}\epsilon^{2}$ & $\phantom{{-}}\epsilon^{4}$ & $\phantom{{-}}\epsilon^{6}$ & $\phantom{{-}}\epsilon^{8}$ & $\phantom{{-}}\epsilon^{10}$ & $\phantom{{-}}\epsilon^{12}$ \\ \hline\rule{0pt}{\dimexpr\ht\strutbox+0.8mm\relax}
 $\sigma^{8}$ & $|\sin(\psi)|^{1}$  & $\phantom{-}4$  & $-6$  & $\phantom{-}4$  & $-1$  & $\phantom{-}\times$  & $\phantom{-}\times$  \\[0.3mm]
 & $|\sin(\psi)|^{3}$  & $-12$  & $\phantom{-}60$  & $-40$  & $\phantom{-}10$  & $\phantom{-}\times$  & $\phantom{-}\times$  \\[0.3mm]
 & $|\sin(\psi)|^{5}$  & $\phantom{-}12$  & $-102$  & $\phantom{-}96$  & $-24$  & $\phantom{-}\times$  & $\phantom{-}\times$  \\[0.3mm]
 & $|\sin(\psi)|^{7}$  & $-4$  & $\phantom{-}48$  & $-60$  & $\phantom{-}16$  & $\phantom{-}\times$  & $\phantom{-}\times$  \\[0.4mm]
\noalign{\vskip 1.5mm} & & $\phantom{{-}}\epsilon^{1}$ & $\phantom{{-}}\epsilon^{3}$ & $\phantom{{-}}\epsilon^{5}$ & $\phantom{{-}}\epsilon^{7}$ & $\phantom{{-}}\epsilon^{9}$ & $\phantom{{-}}\epsilon^{11}$ \\ \hline\rule{0pt}{\dimexpr\ht\strutbox+0.8mm\relax}
 $\sigma^{9}$ & 1  & $-\frac{35}{128}$  & $\phantom{-}\frac{35}{48}$  & $-\frac{7}{8}$  & $\phantom{-}\frac{1}{2}$  & $-\frac{1}{9}$  & $\phantom{-}\times$  \\[0.3mm]
 & $|\sin(\psi)|^{2}$  & $\phantom{-}\frac{35}{32}$  & $-\frac{175}{6}$  & $\phantom{-}35$  & $-20$  & $\phantom{-}\frac{40}{9}$  & $\phantom{-}\times$  \\[0.3mm]
 & $|\sin(\psi)|^{4}$  & $-\frac{105}{64}$  & $\phantom{-}\frac{665}{8}$  & $-210$  & $\phantom{-}120$  & $-\frac{80}{3}$  & $\phantom{-}\times$  \\[0.3mm]
 & $|\sin(\psi)|^{6}$  & $\phantom{-}\frac{35}{32}$  & $-\frac{245}{3}$  & $\phantom{-}\frac{637}{2}$  & $-224$  & $\phantom{-}\frac{448}{9}$  & $\phantom{-}\times$  \\[0.3mm]
 & $|\sin(\psi)|^{8}$  & $-\frac{35}{128}$  & $\phantom{-}\frac{1295}{48}$  & $-\frac{1141}{8}$  & $\phantom{-}\frac{247}{2}$  & $-\frac{256}{9}$  & $\phantom{-}\times$  \\[0.4mm]
\noalign{\vskip 1.5mm} & & $\phantom{{-}}\epsilon^{2}$ & $\phantom{{-}}\epsilon^{4}$ & $\phantom{{-}}\epsilon^{6}$ & $\phantom{{-}}\epsilon^{8}$ & $\phantom{{-}}\epsilon^{10}$ & $\phantom{{-}}\epsilon^{12}$ \\ \hline\rule{0pt}{\dimexpr\ht\strutbox+0.8mm\relax}
 $\sigma^{10}$ & $|\sin(\psi)|^{1}$  & $\phantom{-}5$  & $-10$  & $\phantom{-}10$  & $-5$  & $\phantom{-}1$  & $\phantom{-}\times$  \\[0.3mm]
 & $|\sin(\psi)|^{3}$  & $-20$  & $\phantom{-}160$  & $-160$  & $\phantom{-}80$  & $-16$  & $\phantom{-}\times$  \\[0.3mm]
 & $|\sin(\psi)|^{5}$  & $\phantom{-}30$  & $-420$  & $\phantom{-}672$  & $-336$  & $\phantom{-}\frac{336}{5}$  & $\phantom{-}\times$  \\[0.3mm]
 & $|\sin(\psi)|^{7}$  & $-20$  & $\phantom{-}400$  & $-904$  & $\phantom{-}512$  & $-\frac{512}{5}$  & $\phantom{-}\times$  \\[0.3mm]
 & $|\sin(\psi)|^{9}$  & $\phantom{-}5$  & $-130$  & $\phantom{-}382$  & $-251$  & $\phantom{-}\frac{256}{5}$  & $\phantom{-}\times$  \\[0.4mm]
\noalign{\vskip 1.5mm} & & $\phantom{{-}}\epsilon^{1}$ & $\phantom{{-}}\epsilon^{3}$ & $\phantom{{-}}\epsilon^{5}$ & $\phantom{{-}}\epsilon^{7}$ & $\phantom{{-}}\epsilon^{9}$ & $\phantom{{-}}\epsilon^{11}$ \\ \hline\rule{0pt}{\dimexpr\ht\strutbox+0.8mm\relax}
 $\sigma^{11}$ & 1  & $-\frac{63}{256}$  & $\phantom{-}\frac{105}{128}$  & $-\frac{21}{16}$  & $\phantom{-}\frac{9}{8}$  & $-\frac{1}{2}$  & $\phantom{-}\frac{1}{11}$  \\[0.3mm]
 & $|\sin(\psi)|^{2}$  & $\phantom{-}\frac{315}{256}$  & $-\frac{1575}{32}$  & $\phantom{-}\frac{315}{4}$  & $-\frac{135}{2}$  & $\phantom{-}30$  & $-\frac{60}{11}$  \\[0.3mm]
 & $|\sin(\psi)|^{4}$  & $-\frac{315}{128}$  & $\phantom{-}\frac{12075}{64}$  & $-735$  & $\phantom{-}630$  & $-280$  & $\phantom{-}\frac{560}{11}$  \\[0.3mm]
 & $|\sin(\psi)|^{6}$  & $\phantom{-}\frac{315}{128}$  & $-\frac{8925}{32}$  & $\phantom{-}\frac{13965}{8}$  & $-2016$  & $\phantom{-}896$  & $-\frac{1792}{11}$  \\[0.3mm]
 & $|\sin(\psi)|^{8}$  & $-\frac{315}{256}$  & $\phantom{-}\frac{23625}{128}$  & $-\frac{25515}{16}$  & $\phantom{-}\frac{19251}{8}$  & $-1152$  & $\phantom{-}\frac{2304}{11}$  \\[0.3mm]
 & $|\sin(\psi)|^{10}$  & $\phantom{-}\frac{63}{256}$  & $-\frac{735}{16}$  & $\phantom{-}\frac{4053}{8}$  & $-954$  & $\phantom{-}\frac{1013}{2}$  & $-\frac{1024}{11}$  \\[0.4mm]
\noalign{\vskip 1.5mm} & & $\phantom{{-}}\epsilon^{2}$ & $\phantom{{-}}\epsilon^{4}$ & $\phantom{{-}}\epsilon^{6}$ & $\phantom{{-}}\epsilon^{8}$ & $\phantom{{-}}\epsilon^{10}$ & $\phantom{{-}}\epsilon^{12}$ \\ \hline\rule{0pt}{\dimexpr\ht\strutbox+0.8mm\relax}
 $\sigma^{12}$ & $|\sin(\psi)|^{1}$  & $\phantom{-}6$  & $-15$  & $\phantom{-}20$  & $-15$  & $\phantom{-}6$  & $-1$  \\[0.3mm]
 & $|\sin(\psi)|^{3}$  & $-30$  & $\phantom{-}350$  & $-\frac{1400}{3}$  & $\phantom{-}350$  & $-140$  & $\phantom{-}\frac{70}{3}$  \\[0.3mm]
 & $|\sin(\psi)|^{5}$  & $\phantom{-}60$  & $-1250$  & $\phantom{-}\frac{8960}{3}$  & $-2240$  & $\phantom{-}896$  & $-\frac{448}{3}$  \\[0.3mm]
 & $|\sin(\psi)|^{7}$  & $-60$  & $\phantom{-}1800$  & $-6360$  & $\phantom{-}5760$  & $-2304$  & $\phantom{-}384$  \\[0.3mm]
 & $|\sin(\psi)|^{9}$  & $\phantom{-}30$  & $-1175$  & $\phantom{-}\frac{16580}{3}$  & $-6125$  & $\phantom{-}2560$  & $-\frac{1280}{3}$  \\[0.3mm]
 & $|\sin(\psi)|^{11}$  & $-6$  & $\phantom{-}290$  & $-\frac{5120}{3}$  & $\phantom{-}2270$  & $-1018$  & $\phantom{-}\frac{512}{3}$  \\[0.4mm]
\end{tabular}
\captionsetup{width=0.87\linewidth}
\caption{Tabulated coefficient values for series expansion of $(\phi-\sgn(\psi)\pi/2)/(\sgn(\psi)\cos(\psi))$ as in Theorem 1. Top rows and left-most columns show series and polynomial variables and the table body contains the corresponding values of $d^{\phi}_{k,l,n}$ for indicated indices. Values marked with $\times$ are not part of the expansion.}
\label{tab:d_phi_evo}
\end{table}

\begin{table}
\scriptsize
\begin{tabular}{@{\hspace{0mm}} l@{\hspace{0mm}} c@{\hspace{0.5mm}} | @{\hspace{0.5mm}} l@{}  l@{}  l@{}  l@{}  l@{}  l@{} }
$k$ & $l\setminus n$ & $\phantom{{-}}\epsilon^{1}$ & $\phantom{{-}}\epsilon^{3}$ & $\phantom{{-}}\epsilon^{5}$ & $\phantom{{-}}\epsilon^{7}$ & $\phantom{{-}}\epsilon^{9}$ & $\phantom{{-}}\epsilon^{11}$ \\ \hline\rule{0pt}{\dimexpr\ht\strutbox+0.8mm\relax}
 $\sigma^{2}$ & 1  & $\phantom{-}\frac{1}{2}$  & $-\frac{1}{2}$  & $\phantom{-}\times$  & $\phantom{-}\times$  & $\phantom{-}\times$  & $\phantom{-}\times$  \\[0.3mm]
 & $|\sin(\psi)|^{2}$  & $-\frac{1}{2}$  & $\phantom{-}\frac{1}{2}$  & $\phantom{-}\times$  & $\phantom{-}\times$  & $\phantom{-}\times$  & $\phantom{-}\times$  \\[0.4mm]
\noalign{\vskip 1.5mm} & & $\phantom{{-}}\epsilon^{2}$ & $\phantom{{-}}\epsilon^{4}$ & $\phantom{{-}}\epsilon^{6}$ & $\phantom{{-}}\epsilon^{8}$ & $\phantom{{-}}\epsilon^{10}$ & $\phantom{{-}}\epsilon^{12}$ \\ \hline\rule{0pt}{\dimexpr\ht\strutbox+0.8mm\relax}
 $\sigma^{3}$ & $|\sin(\psi)|^{1}$  & $-\frac{1}{2}$  & $\phantom{-}\frac{1}{2}$  & $\phantom{-}\times$  & $\phantom{-}\times$  & $\phantom{-}\times$  & $\phantom{-}\times$  \\[0.3mm]
 & $|\sin(\psi)|^{3}$  & $\phantom{-}\frac{1}{2}$  & $-\frac{1}{2}$  & $\phantom{-}\times$  & $\phantom{-}\times$  & $\phantom{-}\times$  & $\phantom{-}\times$  \\[0.4mm]
\noalign{\vskip 1.5mm} & & $\phantom{{-}}\epsilon^{1}$ & $\phantom{{-}}\epsilon^{3}$ & $\phantom{{-}}\epsilon^{5}$ & $\phantom{{-}}\epsilon^{7}$ & $\phantom{{-}}\epsilon^{9}$ & $\phantom{{-}}\epsilon^{11}$ \\ \hline\rule{0pt}{\dimexpr\ht\strutbox+0.8mm\relax}
 $\sigma^{4}$ & 1  & $\phantom{-}\frac{1}{8}$  & $-\frac{1}{4}$  & $\phantom{-}\frac{1}{8}$  & $\phantom{-}\times$  & $\phantom{-}\times$  & $\phantom{-}\times$  \\[0.3mm]
 & $|\sin(\psi)|^{2}$  & $-\frac{1}{4}$  & $\phantom{-}1$  & $-\frac{3}{4}$  & $\phantom{-}\times$  & $\phantom{-}\times$  & $\phantom{-}\times$  \\[0.3mm]
 & $|\sin(\psi)|^{4}$  & $\phantom{-}\frac{1}{8}$  & $-\frac{3}{4}$  & $\phantom{-}\frac{5}{8}$  & $\phantom{-}\times$  & $\phantom{-}\times$  & $\phantom{-}\times$  \\[0.4mm]
\noalign{\vskip 1.5mm} & & $\phantom{{-}}\epsilon^{2}$ & $\phantom{{-}}\epsilon^{4}$ & $\phantom{{-}}\epsilon^{6}$ & $\phantom{{-}}\epsilon^{8}$ & $\phantom{{-}}\epsilon^{10}$ & $\phantom{{-}}\epsilon^{12}$ \\ \hline\rule{0pt}{\dimexpr\ht\strutbox+0.8mm\relax}
 $\sigma^{5}$ & $|\sin(\psi)|^{1}$  & $-\frac{1}{2}$  & $\phantom{-}\frac{7}{8}$  & $-\frac{3}{8}$  & $\phantom{-}\times$  & $\phantom{-}\times$  & $\phantom{-}\times$  \\[0.3mm]
 & $|\sin(\psi)|^{3}$  & $\phantom{-}1$  & $-\frac{9}{4}$  & $\phantom{-}\frac{5}{4}$  & $\phantom{-}\times$  & $\phantom{-}\times$  & $\phantom{-}\times$  \\[0.3mm]
 & $|\sin(\psi)|^{5}$  & $-\frac{1}{2}$  & $\phantom{-}\frac{11}{8}$  & $-\frac{7}{8}$  & $\phantom{-}\times$  & $\phantom{-}\times$  & $\phantom{-}\times$  \\[0.4mm]
\noalign{\vskip 1.5mm} & & $\phantom{{-}}\epsilon^{1}$ & $\phantom{{-}}\epsilon^{3}$ & $\phantom{{-}}\epsilon^{5}$ & $\phantom{{-}}\epsilon^{7}$ & $\phantom{{-}}\epsilon^{9}$ & $\phantom{{-}}\epsilon^{11}$ \\ \hline\rule{0pt}{\dimexpr\ht\strutbox+0.8mm\relax}
 $\sigma^{6}$ & 1  & $\phantom{-}\frac{1}{16}$  & $-\frac{3}{16}$  & $\phantom{-}\frac{3}{16}$  & $-\frac{1}{16}$  & $\phantom{-}\times$  & $\phantom{-}\times$  \\[0.3mm]
 & $|\sin(\psi)|^{2}$  & $-\frac{3}{16}$  & $\phantom{-}\frac{29}{16}$  & $-\frac{41}{16}$  & $\phantom{-}\frac{15}{16}$  & $\phantom{-}\times$  & $\phantom{-}\times$  \\[0.3mm]
 & $|\sin(\psi)|^{4}$  & $\phantom{-}\frac{3}{16}$  & $-\frac{49}{16}$  & $\phantom{-}\frac{81}{16}$  & $-\frac{35}{16}$  & $\phantom{-}\times$  & $\phantom{-}\times$  \\[0.3mm]
 & $|\sin(\psi)|^{6}$  & $-\frac{1}{16}$  & $\phantom{-}\frac{23}{16}$  & $-\frac{43}{16}$  & $\phantom{-}\frac{21}{16}$  & $\phantom{-}\times$  & $\phantom{-}\times$  \\[0.4mm]
\noalign{\vskip 1.5mm} & & $\phantom{{-}}\epsilon^{2}$ & $\phantom{{-}}\epsilon^{4}$ & $\phantom{{-}}\epsilon^{6}$ & $\phantom{{-}}\epsilon^{8}$ & $\phantom{{-}}\epsilon^{10}$ & $\phantom{{-}}\epsilon^{12}$ \\ \hline\rule{0pt}{\dimexpr\ht\strutbox+0.8mm\relax}
 $\sigma^{7}$ & $|\sin(\psi)|^{1}$  & $-\frac{1}{2}$  & $\phantom{-}\frac{5}{4}$  & $-\frac{17}{16}$  & $\phantom{-}\frac{5}{16}$  & $\phantom{-}\times$  & $\phantom{-}\times$  \\[0.3mm]
 & $|\sin(\psi)|^{3}$  & $\phantom{-}\frac{3}{2}$  & $-\frac{25}{4}$  & $\phantom{-}\frac{111}{16}$  & $-\frac{35}{16}$  & $\phantom{-}\times$  & $\phantom{-}\times$  \\[0.3mm]
 & $|\sin(\psi)|^{5}$  & $-\frac{3}{2}$  & $\phantom{-}\frac{35}{4}$  & $-\frac{179}{16}$  & $\phantom{-}\frac{63}{16}$  & $\phantom{-}\times$  & $\phantom{-}\times$  \\[0.3mm]
 & $|\sin(\psi)|^{7}$  & $\phantom{-}\frac{1}{2}$  & $-\frac{15}{4}$  & $\phantom{-}\frac{85}{16}$  & $-\frac{33}{16}$  & $\phantom{-}\times$  & $\phantom{-}\times$  \\[0.4mm]
\noalign{\vskip 1.5mm} & & $\phantom{{-}}\epsilon^{1}$ & $\phantom{{-}}\epsilon^{3}$ & $\phantom{{-}}\epsilon^{5}$ & $\phantom{{-}}\epsilon^{7}$ & $\phantom{{-}}\epsilon^{9}$ & $\phantom{{-}}\epsilon^{11}$ \\ \hline\rule{0pt}{\dimexpr\ht\strutbox+0.8mm\relax}
 $\sigma^{8}$ & 1  & $\phantom{-}\frac{5}{128}$  & $-\frac{5}{32}$  & $\phantom{-}\frac{15}{64}$  & $-\frac{5}{32}$  & $\phantom{-}\frac{5}{128}$  & $\phantom{-}\times$  \\[0.3mm]
 & $|\sin(\psi)|^{2}$  & $-\frac{5}{32}$  & $\phantom{-}\frac{45}{16}$  & $-\frac{23}{4}$  & $\phantom{-}\frac{67}{16}$  & $-\frac{35}{32}$  & $\phantom{-}\times$  \\[0.3mm]
 & $|\sin(\psi)|^{4}$  & $\phantom{-}\frac{15}{64}$  & $-\frac{15}{2}$  & $\phantom{-}\frac{647}{32}$  & $-\frac{143}{8}$  & $\phantom{-}\frac{315}{64}$  & $\phantom{-}\times$  \\[0.3mm]
 & $|\sin(\psi)|^{6}$  & $-\frac{5}{32}$  & $\phantom{-}\frac{115}{16}$  & $-\frac{193}{8}$  & $\phantom{-}\frac{389}{16}$  & $-\frac{231}{32}$  & $\phantom{-}\times$  \\[0.3mm]
 & $|\sin(\psi)|^{8}$  & $\phantom{-}\frac{5}{128}$  & $-\frac{75}{32}$  & $\phantom{-}\frac{603}{64}$  & $-\frac{335}{32}$  & $\phantom{-}\frac{429}{128}$  & $\phantom{-}\times$  \\[0.4mm]
\noalign{\vskip 1.5mm} & & $\phantom{{-}}\epsilon^{2}$ & $\phantom{{-}}\epsilon^{4}$ & $\phantom{{-}}\epsilon^{6}$ & $\phantom{{-}}\epsilon^{8}$ & $\phantom{{-}}\epsilon^{10}$ & $\phantom{{-}}\epsilon^{12}$ \\ \hline\rule{0pt}{\dimexpr\ht\strutbox+0.8mm\relax}
 $\sigma^{9}$ & $|\sin(\psi)|^{1}$  & $-\frac{1}{2}$  & $\phantom{-}\frac{13}{8}$  & $-\frac{33}{16}$  & $\phantom{-}\frac{155}{128}$  & $-\frac{35}{128}$  & $\phantom{-}\times$  \\[0.3mm]
 & $|\sin(\psi)|^{3}$  & $\phantom{-}2$  & $-\frac{27}{2}$  & $\phantom{-}\frac{89}{4}$  & $-\frac{449}{32}$  & $\phantom{-}\frac{105}{32}$  & $\phantom{-}\times$  \\[0.3mm]
 & $|\sin(\psi)|^{5}$  & $-3$  & $\phantom{-}\frac{123}{4}$  & $-\frac{491}{8}$  & $\phantom{-}\frac{2845}{64}$  & $-\frac{693}{64}$  & $\phantom{-}\times$  \\[0.3mm]
 & $|\sin(\psi)|^{7}$  & $\phantom{-}2$  & $-\frac{55}{2}$  & $\phantom{-}\frac{257}{4}$  & $-\frac{1669}{32}$  & $\phantom{-}\frac{429}{32}$  & $\phantom{-}\times$  \\[0.3mm]
 & $|\sin(\psi)|^{9}$  & $-\frac{1}{2}$  & $\phantom{-}\frac{69}{8}$  & $-\frac{369}{16}$  & $\phantom{-}\frac{2627}{128}$  & $-\frac{715}{128}$  & $\phantom{-}\times$  \\[0.4mm]
\noalign{\vskip 1.5mm} & & $\phantom{{-}}\epsilon^{1}$ & $\phantom{{-}}\epsilon^{3}$ & $\phantom{{-}}\epsilon^{5}$ & $\phantom{{-}}\epsilon^{7}$ & $\phantom{{-}}\epsilon^{9}$ & $\phantom{{-}}\epsilon^{11}$ \\ \hline\rule{0pt}{\dimexpr\ht\strutbox+0.8mm\relax}
 $\sigma^{10}$ & 1  & $\phantom{-}\frac{7}{256}$  & $-\frac{35}{256}$  & $\phantom{-}\frac{35}{128}$  & $-\frac{35}{128}$  & $\phantom{-}\frac{35}{256}$  & $-\frac{7}{256}$  \\[0.3mm]
 & $|\sin(\psi)|^{2}$  & $-\frac{35}{256}$  & $\phantom{-}\frac{1015}{256}$  & $-\frac{1351}{128}$  & $\phantom{-}\frac{1471}{128}$  & $-\frac{1535}{256}$  & $\phantom{-}\frac{315}{256}$  \\[0.3mm]
 & $|\sin(\psi)|^{4}$  & $\phantom{-}\frac{35}{128}$  & $-\frac{1855}{128}$  & $\phantom{-}\frac{3703}{64}$  & $-\frac{4951}{64}$  & $\phantom{-}\frac{5471}{128}$  & $-\frac{1155}{128}$  \\[0.3mm]
 & $|\sin(\psi)|^{6}$  & $-\frac{35}{128}$  & $\phantom{-}\frac{2695}{128}$  & $-\frac{7231}{64}$  & $\phantom{-}\frac{11287}{64}$  & $-\frac{13775}{128}$  & $\phantom{-}\frac{3003}{128}$  \\[0.3mm]
 & $|\sin(\psi)|^{8}$  & $\phantom{-}\frac{35}{256}$  & $-\frac{3535}{256}$  & $\phantom{-}\frac{11935}{128}$  & $-\frac{21151}{128}$  & $\phantom{-}\frac{28367}{256}$  & $-\frac{6435}{256}$  \\[0.3mm]
 & $|\sin(\psi)|^{10}$  & $-\frac{7}{256}$  & $\phantom{-}\frac{875}{256}$  & $-\frac{3563}{128}$  & $\phantom{-}\frac{7043}{128}$  & $-\frac{10259}{256}$  & $\phantom{-}\frac{2431}{256}$  \\[0.4mm]
\noalign{\vskip 1.5mm} & & $\phantom{{-}}\epsilon^{2}$ & $\phantom{{-}}\epsilon^{4}$ & $\phantom{{-}}\epsilon^{6}$ & $\phantom{{-}}\epsilon^{8}$ & $\phantom{{-}}\epsilon^{10}$ & $\phantom{{-}}\epsilon^{12}$ \\ \hline\rule{0pt}{\dimexpr\ht\strutbox+0.8mm\relax}
 $\sigma^{11}$ & $|\sin(\psi)|^{1}$  & $-\frac{1}{2}$  & $\phantom{-}2$  & $-\frac{27}{8}$  & $\phantom{-}\frac{95}{32}$  & $-\frac{343}{256}$  & $\phantom{-}\frac{63}{256}$  \\[0.3mm]
 & $|\sin(\psi)|^{3}$  & $\phantom{-}\frac{5}{2}$  & $-25$  & $\phantom{-}\frac{435}{8}$  & $-\frac{1645}{32}$  & $\phantom{-}\frac{6155}{256}$  & $-\frac{1155}{256}$  \\[0.3mm]
 & $|\sin(\psi)|^{5}$  & $-5$  & $\phantom{-}80$  & $-\frac{903}{4}$  & $\phantom{-}\frac{3991}{16}$  & $-\frac{15635}{128}$  & $\phantom{-}\frac{3003}{128}$  \\[0.3mm]
 & $|\sin(\psi)|^{7}$  & $\phantom{-}5$  & $-110$  & $\phantom{-}\frac{1539}{4}$  & $-\frac{7753}{16}$  & $\phantom{-}\frac{32651}{128}$  & $-\frac{6435}{128}$  \\[0.3mm]
 & $|\sin(\psi)|^{9}$  & $-\frac{5}{2}$  & $\phantom{-}70$  & $-\frac{2343}{8}$  & $\phantom{-}\frac{13171}{32}$  & $-\frac{59827}{256}$  & $\phantom{-}\frac{12155}{256}$  \\[0.3mm]
 & $|\sin(\psi)|^{11}$  & $\phantom{-}\frac{1}{2}$  & $-17$  & $\phantom{-}\frac{663}{8}$  & $-\frac{4097}{32}$  & $\phantom{-}\frac{19983}{256}$  & $-\frac{4199}{256}$  \\[0.4mm]
\end{tabular}
\captionsetup{width=0.87\linewidth}
\caption{Tabulated coefficient values for series expansion of $(h+b-\rho|\sin(\psi)|)/a$ as in Theorem 1. Top rows and left-most columns show series and polynomial variables and the table body contains the corresponding values of $d^{h}_{k,l,n}$ for indicated indices. Values marked with $\times$ are not part of the expansion.}
\label{tab:d_h}
\end{table}

\begin{table}
\scriptsize
\begin{tabular}{@{\hspace{0mm}} l@{\hspace{0mm}} c@{\hspace{0.5mm}} | @{\hspace{0.5mm}} l@{}  l@{}  l@{}  l@{}  l@{} }
$k$ & $l\setminus n$ & $\phantom{{-}}\epsilon^{2}$ & $\phantom{{-}}\epsilon^{4}$ & $\phantom{{-}}\epsilon^{6}$ & $\phantom{{-}}\epsilon^{8}$ & $\phantom{{-}}\epsilon^{10}$ \\ \hline\rule{0pt}{\dimexpr\ht\strutbox+0.8mm\relax}
 $\sigma^{2}$ & 1  & $-\frac{1}{2}$  & $\phantom{-}\times$  & $\phantom{-}\times$  & $\phantom{-}\times$  & $\phantom{-}\times$  \\[0.3mm]
 & $|\sin(\psi)|^{2}$  & $\phantom{-}\frac{1}{2}$  & $\phantom{-}\times$  & $\phantom{-}\times$  & $\phantom{-}\times$  & $\phantom{-}\times$  \\[0.4mm]
\noalign{\vskip 1.5mm} & & $\phantom{{-}}\epsilon^{3}$ & $\phantom{{-}}\epsilon^{5}$ & $\phantom{{-}}\epsilon^{7}$ & $\phantom{{-}}\epsilon^{9}$ & $\phantom{{-}}\epsilon^{11}$ \\ \hline\rule{0pt}{\dimexpr\ht\strutbox+0.8mm\relax}
 $\sigma^{3}$ & $|\sin(\psi)|^{1}$  & $\phantom{-}1$  & $\phantom{-}\times$  & $\phantom{-}\times$  & $\phantom{-}\times$  & $\phantom{-}\times$  \\[0.3mm]
 & $|\sin(\psi)|^{3}$  & $-1$  & $\phantom{-}\times$  & $\phantom{-}\times$  & $\phantom{-}\times$  & $\phantom{-}\times$  \\[0.4mm]
\noalign{\vskip 1.5mm} & & $\phantom{{-}}\epsilon^{2}$ & $\phantom{{-}}\epsilon^{4}$ & $\phantom{{-}}\epsilon^{6}$ & $\phantom{{-}}\epsilon^{8}$ & $\phantom{{-}}\epsilon^{10}$ \\ \hline\rule{0pt}{\dimexpr\ht\strutbox+0.8mm\relax}
 $\sigma^{4}$ & 1  & $-\frac{1}{2}$  & $\phantom{-}\frac{3}{8}$  & $\phantom{-}\times$  & $\phantom{-}\times$  & $\phantom{-}\times$  \\[0.3mm]
 & $|\sin(\psi)|^{2}$  & $\phantom{-}1$  & $-\frac{9}{4}$  & $\phantom{-}\times$  & $\phantom{-}\times$  & $\phantom{-}\times$  \\[0.3mm]
 & $|\sin(\psi)|^{4}$  & $-\frac{1}{2}$  & $\phantom{-}\frac{15}{8}$  & $\phantom{-}\times$  & $\phantom{-}\times$  & $\phantom{-}\times$  \\[0.4mm]
\noalign{\vskip 1.5mm} & & $\phantom{{-}}\epsilon^{3}$ & $\phantom{{-}}\epsilon^{5}$ & $\phantom{{-}}\epsilon^{7}$ & $\phantom{{-}}\epsilon^{9}$ & $\phantom{{-}}\epsilon^{11}$ \\ \hline\rule{0pt}{\dimexpr\ht\strutbox+0.8mm\relax}
 $\sigma^{5}$ & $|\sin(\psi)|^{1}$  & $\phantom{-}\frac{5}{2}$  & $-\frac{3}{2}$  & $\phantom{-}\times$  & $\phantom{-}\times$  & $\phantom{-}\times$  \\[0.3mm]
 & $|\sin(\psi)|^{3}$  & $-5$  & $\phantom{-}5$  & $\phantom{-}\times$  & $\phantom{-}\times$  & $\phantom{-}\times$  \\[0.3mm]
 & $|\sin(\psi)|^{5}$  & $\phantom{-}\frac{5}{2}$  & $-\frac{7}{2}$  & $\phantom{-}\times$  & $\phantom{-}\times$  & $\phantom{-}\times$  \\[0.4mm]
\noalign{\vskip 1.5mm} & & $\phantom{{-}}\epsilon^{2}$ & $\phantom{{-}}\epsilon^{4}$ & $\phantom{{-}}\epsilon^{6}$ & $\phantom{{-}}\epsilon^{8}$ & $\phantom{{-}}\epsilon^{10}$ \\ \hline\rule{0pt}{\dimexpr\ht\strutbox+0.8mm\relax}
 $\sigma^{6}$ & 1  & $-\frac{1}{2}$  & $\phantom{-}\frac{3}{4}$  & $-\frac{5}{16}$  & $\phantom{-}\times$  & $\phantom{-}\times$  \\[0.3mm]
 & $|\sin(\psi)|^{2}$  & $\phantom{-}\frac{3}{2}$  & $-\frac{39}{4}$  & $\phantom{-}\frac{75}{16}$  & $\phantom{-}\times$  & $\phantom{-}\times$  \\[0.3mm]
 & $|\sin(\psi)|^{4}$  & $-\frac{3}{2}$  & $\phantom{-}\frac{69}{4}$  & $-\frac{175}{16}$  & $\phantom{-}\times$  & $\phantom{-}\times$  \\[0.3mm]
 & $|\sin(\psi)|^{6}$  & $\phantom{-}\frac{1}{2}$  & $-\frac{33}{4}$  & $\phantom{-}\frac{105}{16}$  & $\phantom{-}\times$  & $\phantom{-}\times$  \\[0.4mm]
\noalign{\vskip 1.5mm} & & $\phantom{{-}}\epsilon^{3}$ & $\phantom{{-}}\epsilon^{5}$ & $\phantom{{-}}\epsilon^{7}$ & $\phantom{{-}}\epsilon^{9}$ & $\phantom{{-}}\epsilon^{11}$ \\ \hline\rule{0pt}{\dimexpr\ht\strutbox+0.8mm\relax}
 $\sigma^{7}$ & $|\sin(\psi)|^{1}$  & $\phantom{-}\frac{35}{8}$  & $-\frac{21}{4}$  & $\phantom{-}\frac{15}{8}$  & $\phantom{-}\times$  & $\phantom{-}\times$  \\[0.3mm]
 & $|\sin(\psi)|^{3}$  & $-\frac{105}{8}$  & $\phantom{-}\frac{133}{4}$  & $-\frac{105}{8}$  & $\phantom{-}\times$  & $\phantom{-}\times$  \\[0.3mm]
 & $|\sin(\psi)|^{5}$  & $\phantom{-}\frac{105}{8}$  & $-\frac{203}{4}$  & $\phantom{-}\frac{189}{8}$  & $\phantom{-}\times$  & $\phantom{-}\times$  \\[0.3mm]
 & $|\sin(\psi)|^{7}$  & $-\frac{35}{8}$  & $\phantom{-}\frac{91}{4}$  & $-\frac{99}{8}$  & $\phantom{-}\times$  & $\phantom{-}\times$  \\[0.4mm]
\noalign{\vskip 1.5mm} & & $\phantom{{-}}\epsilon^{2}$ & $\phantom{{-}}\epsilon^{4}$ & $\phantom{{-}}\epsilon^{6}$ & $\phantom{{-}}\epsilon^{8}$ & $\phantom{{-}}\epsilon^{10}$ \\ \hline\rule{0pt}{\dimexpr\ht\strutbox+0.8mm\relax}
 $\sigma^{8}$ & 1  & $-\frac{1}{2}$  & $\phantom{-}\frac{9}{8}$  & $-\frac{15}{16}$  & $\phantom{-}\frac{35}{128}$  & $\phantom{-}\times$  \\[0.3mm]
 & $|\sin(\psi)|^{2}$  & $\phantom{-}2$  & $-\frac{51}{2}$  & $\phantom{-}\frac{99}{4}$  & $-\frac{245}{32}$  & $\phantom{-}\times$  \\[0.3mm]
 & $|\sin(\psi)|^{4}$  & $-3$  & $\phantom{-}\frac{279}{4}$  & $-\frac{829}{8}$  & $\phantom{-}\frac{2205}{64}$  & $\phantom{-}\times$  \\[0.3mm]
 & $|\sin(\psi)|^{6}$  & $\phantom{-}2$  & $-\frac{135}{2}$  & $\phantom{-}\frac{547}{4}$  & $-\frac{1617}{32}$  & $\phantom{-}\times$  \\[0.3mm]
 & $|\sin(\psi)|^{8}$  & $-\frac{1}{2}$  & $\phantom{-}\frac{177}{8}$  & $-\frac{911}{16}$  & $\phantom{-}\frac{3003}{128}$  & $\phantom{-}\times$  \\[0.4mm]
\noalign{\vskip 1.5mm} & & $\phantom{{-}}\epsilon^{3}$ & $\phantom{{-}}\epsilon^{5}$ & $\phantom{{-}}\epsilon^{7}$ & $\phantom{{-}}\epsilon^{9}$ & $\phantom{{-}}\epsilon^{11}$ \\ \hline\rule{0pt}{\dimexpr\ht\strutbox+0.8mm\relax}
 $\sigma^{9}$ & $|\sin(\psi)|^{1}$  & $\phantom{-}\frac{105}{16}$  & $-\frac{189}{16}$  & $\phantom{-}\frac{135}{16}$  & $-\frac{35}{16}$  & $\phantom{-}\times$  \\[0.3mm]
 & $|\sin(\psi)|^{3}$  & $-\frac{105}{4}$  & $\phantom{-}\frac{483}{4}$  & $-\frac{387}{4}$  & $\phantom{-}\frac{105}{4}$  & $\phantom{-}\times$  \\[0.3mm]
 & $|\sin(\psi)|^{5}$  & $\phantom{-}\frac{315}{8}$  & $-\frac{2331}{8}$  & $\phantom{-}\frac{2421}{8}$  & $-\frac{693}{8}$  & $\phantom{-}\times$  \\[0.3mm]
 & $|\sin(\psi)|^{7}$  & $-\frac{105}{4}$  & $\phantom{-}\frac{1071}{4}$  & $-\frac{1395}{4}$  & $\phantom{-}\frac{429}{4}$  & $\phantom{-}\times$  \\[0.3mm]
 & $|\sin(\psi)|^{9}$  & $\phantom{-}\frac{105}{16}$  & $-\frac{1365}{16}$  & $\phantom{-}\frac{2151}{16}$  & $-\frac{715}{16}$  & $\phantom{-}\times$  \\[0.4mm]
\noalign{\vskip 1.5mm} & & $\phantom{{-}}\epsilon^{2}$ & $\phantom{{-}}\epsilon^{4}$ & $\phantom{{-}}\epsilon^{6}$ & $\phantom{{-}}\epsilon^{8}$ & $\phantom{{-}}\epsilon^{10}$ \\ \hline\rule{0pt}{\dimexpr\ht\strutbox+0.8mm\relax}
 $\sigma^{10}$ & 1  & $-\frac{1}{2}$  & $\phantom{-}\frac{3}{2}$  & $-\frac{15}{8}$  & $\phantom{-}\frac{35}{32}$  & $-\frac{63}{256}$  \\[0.3mm]
 & $|\sin(\psi)|^{2}$  & $\phantom{-}\frac{5}{2}$  & $-\frac{105}{2}$  & $\phantom{-}\frac{615}{8}$  & $-\frac{1525}{32}$  & $\phantom{-}\frac{2835}{256}$  \\[0.3mm]
 & $|\sin(\psi)|^{4}$  & $-5$  & $\phantom{-}195$  & $-\frac{1995}{4}$  & $\phantom{-}\frac{5395}{16}$  & $-\frac{10395}{128}$  \\[0.3mm]
 & $|\sin(\psi)|^{6}$  & $\phantom{-}5$  & $-285$  & $\phantom{-}\frac{4215}{4}$  & $-\frac{13465}{16}$  & $\phantom{-}\frac{27027}{128}$  \\[0.3mm]
 & $|\sin(\psi)|^{8}$  & $-\frac{5}{2}$  & $\phantom{-}\frac{375}{2}$  & $-\frac{7275}{8}$  & $\phantom{-}\frac{27415}{32}$  & $-\frac{57915}{256}$  \\[0.3mm]
 & $|\sin(\psi)|^{10}$  & $\phantom{-}\frac{1}{2}$  & $-\frac{93}{2}$  & $\phantom{-}\frac{2235}{8}$  & $-\frac{9785}{32}$  & $\phantom{-}\frac{21879}{256}$  \\[0.4mm]
\noalign{\vskip 1.5mm} & & $\phantom{{-}}\epsilon^{3}$ & $\phantom{{-}}\epsilon^{5}$ & $\phantom{{-}}\epsilon^{7}$ & $\phantom{{-}}\epsilon^{9}$ & $\phantom{{-}}\epsilon^{11}$ \\ \hline\rule{0pt}{\dimexpr\ht\strutbox+0.8mm\relax}
 $\sigma^{11}$ & $|\sin(\psi)|^{1}$  & $\phantom{-}\frac{1155}{128}$  & $-\frac{693}{32}$  & $\phantom{-}\frac{1485}{64}$  & $-\frac{385}{32}$  & $\phantom{-}\frac{315}{128}$  \\[0.3mm]
 & $|\sin(\psi)|^{3}$  & $-\frac{5775}{128}$  & $\phantom{-}\frac{10395}{32}$  & $-\frac{25245}{64}$  & $\phantom{-}\frac{6875}{32}$  & $-\frac{5775}{128}$  \\[0.3mm]
 & $|\sin(\psi)|^{5}$  & $\phantom{-}\frac{5775}{64}$  & $-\frac{17325}{16}$  & $\phantom{-}\frac{59697}{32}$  & $-\frac{17369}{16}$  & $\phantom{-}\frac{15015}{64}$  \\[0.3mm]
 & $|\sin(\psi)|^{7}$  & $-\frac{5775}{64}$  & $\phantom{-}\frac{24255}{16}$  & $-\frac{110781}{32}$  & $\phantom{-}\frac{36047}{16}$  & $-\frac{32175}{64}$  \\[0.3mm]
 & $|\sin(\psi)|^{9}$  & $\phantom{-}\frac{5775}{128}$  & $-\frac{31185}{32}$  & $\phantom{-}\frac{178497}{64}$  & $-\frac{65549}{32}$  & $\phantom{-}\frac{60775}{128}$  \\[0.3mm]
 & $|\sin(\psi)|^{11}$  & $-\frac{1155}{128}$  & $\phantom{-}\frac{7623}{32}$  & $-\frac{52569}{64}$  & $\phantom{-}\frac{21703}{32}$  & $-\frac{20995}{128}$  \\[0.4mm]
\end{tabular}
\captionsetup{width=0.87\linewidth}
\caption{Tabulated coefficient values for series expansion of $(\sin(\phi)-\sgn(\psi))/\sgn(\psi)$ as in Theorem 1. Top rows and left-most columns show series and polynomial variables and the table body contains the corresponding values of $d^{\sin}_{k,l,n}$ for indicated indices. Values marked with $\times$ are not part of the expansion.}
\label{tab:d_sin_phi_evo}
\end{table}

\begin{table}
\scriptsize
\begin{tabular}{@{\hspace{0mm}} l@{\hspace{0mm}} c@{\hspace{0.5mm}} | @{\hspace{0.5mm}} l@{}  l@{}  l@{}  l@{}  l@{}  l@{} }
$k$ & $l\setminus n$ & $\phantom{{-}}\epsilon^{1}$ & $\phantom{{-}}\epsilon^{3}$ & $\phantom{{-}}\epsilon^{5}$ & $\phantom{{-}}\epsilon^{7}$ & $\phantom{{-}}\epsilon^{9}$ & $\phantom{{-}}\epsilon^{11}$ \\ \hline\rule{0pt}{\dimexpr\ht\strutbox+0.8mm\relax}
 $\sigma^{1}$ & 1  & $\phantom{-}1$  & $\phantom{-}\times$  & $\phantom{-}\times$  & $\phantom{-}\times$  & $\phantom{-}\times$  & $\phantom{-}\times$  \\[0.4mm]
\noalign{\vskip 1.5mm} & & $\phantom{{-}}{1}$ & $\phantom{{-}}\epsilon^{2}$ & $\phantom{{-}}\epsilon^{4}$ & $\phantom{{-}}\epsilon^{6}$ & $\phantom{{-}}\epsilon^{8}$ & $\phantom{{-}}\epsilon^{10}$ \\ \hline\rule{0pt}{\dimexpr\ht\strutbox+0.8mm\relax}
 $\sigma^{2}$ & $|\sin(\psi)|^{1}$  & $\phantom{-}\times$  & $-1$  & $\phantom{-}\times$  & $\phantom{-}\times$  & $\phantom{-}\times$  & $\phantom{-}\times$  \\[0.4mm]
\noalign{\vskip 1.5mm} & & $\phantom{{-}}\epsilon^{1}$ & $\phantom{{-}}\epsilon^{3}$ & $\phantom{{-}}\epsilon^{5}$ & $\phantom{{-}}\epsilon^{7}$ & $\phantom{{-}}\epsilon^{9}$ & $\phantom{{-}}\epsilon^{11}$ \\ \hline\rule{0pt}{\dimexpr\ht\strutbox+0.8mm\relax}
 $\sigma^{3}$ & 1  & $\phantom{-}\frac{1}{2}$  & $-\frac{1}{2}$  & $\phantom{-}\times$  & $\phantom{-}\times$  & $\phantom{-}\times$  & $\phantom{-}\times$  \\[0.3mm]
 & $|\sin(\psi)|^{2}$  & $-\frac{1}{2}$  & $\phantom{-}\frac{3}{2}$  & $\phantom{-}\times$  & $\phantom{-}\times$  & $\phantom{-}\times$  & $\phantom{-}\times$  \\[0.4mm]
\noalign{\vskip 1.5mm} & & $\phantom{{-}}{1}$ & $\phantom{{-}}\epsilon^{2}$ & $\phantom{{-}}\epsilon^{4}$ & $\phantom{{-}}\epsilon^{6}$ & $\phantom{{-}}\epsilon^{8}$ & $\phantom{{-}}\epsilon^{10}$ \\ \hline\rule{0pt}{\dimexpr\ht\strutbox+0.8mm\relax}
 $\sigma^{4}$ & $|\sin(\psi)|^{1}$  & $\phantom{-}\times$  & $-2$  & $\phantom{-}\frac{3}{2}$  & $\phantom{-}\times$  & $\phantom{-}\times$  & $\phantom{-}\times$  \\[0.3mm]
 & $|\sin(\psi)|^{3}$  & $\phantom{-}\times$  & $\phantom{-}2$  & $-\frac{5}{2}$  & $\phantom{-}\times$  & $\phantom{-}\times$  & $\phantom{-}\times$  \\[0.4mm]
\noalign{\vskip 1.5mm} & & $\phantom{{-}}\epsilon^{1}$ & $\phantom{{-}}\epsilon^{3}$ & $\phantom{{-}}\epsilon^{5}$ & $\phantom{{-}}\epsilon^{7}$ & $\phantom{{-}}\epsilon^{9}$ & $\phantom{{-}}\epsilon^{11}$ \\ \hline\rule{0pt}{\dimexpr\ht\strutbox+0.8mm\relax}
 $\sigma^{5}$ & 1  & $\phantom{-}\frac{3}{8}$  & $-\frac{3}{4}$  & $\phantom{-}\frac{3}{8}$  & $\phantom{-}\times$  & $\phantom{-}\times$  & $\phantom{-}\times$  \\[0.3mm]
 & $|\sin(\psi)|^{2}$  & $-\frac{3}{4}$  & $\phantom{-}\frac{13}{2}$  & $-\frac{15}{4}$  & $\phantom{-}\times$  & $\phantom{-}\times$  & $\phantom{-}\times$  \\[0.3mm]
 & $|\sin(\psi)|^{4}$  & $\phantom{-}\frac{3}{8}$  & $-\frac{23}{4}$  & $\phantom{-}\frac{35}{8}$  & $\phantom{-}\times$  & $\phantom{-}\times$  & $\phantom{-}\times$  \\[0.4mm]
\noalign{\vskip 1.5mm} & & $\phantom{{-}}{1}$ & $\phantom{{-}}\epsilon^{2}$ & $\phantom{{-}}\epsilon^{4}$ & $\phantom{{-}}\epsilon^{6}$ & $\phantom{{-}}\epsilon^{8}$ & $\phantom{{-}}\epsilon^{10}$ \\ \hline\rule{0pt}{\dimexpr\ht\strutbox+0.8mm\relax}
 $\sigma^{6}$ & $|\sin(\psi)|^{1}$  & $\phantom{-}\times$  & $-3$  & $\phantom{-}\frac{9}{2}$  & $-\frac{15}{8}$  & $\phantom{-}\times$  & $\phantom{-}\times$  \\[0.3mm]
 & $|\sin(\psi)|^{3}$  & $\phantom{-}\times$  & $\phantom{-}6$  & $-19$  & $\phantom{-}\frac{35}{4}$  & $\phantom{-}\times$  & $\phantom{-}\times$  \\[0.3mm]
 & $|\sin(\psi)|^{5}$  & $\phantom{-}\times$  & $-3$  & $\phantom{-}\frac{29}{2}$  & $-\frac{63}{8}$  & $\phantom{-}\times$  & $\phantom{-}\times$  \\[0.4mm]
\noalign{\vskip 1.5mm} & & $\phantom{{-}}\epsilon^{1}$ & $\phantom{{-}}\epsilon^{3}$ & $\phantom{{-}}\epsilon^{5}$ & $\phantom{{-}}\epsilon^{7}$ & $\phantom{{-}}\epsilon^{9}$ & $\phantom{{-}}\epsilon^{11}$ \\ \hline\rule{0pt}{\dimexpr\ht\strutbox+0.8mm\relax}
 $\sigma^{7}$ & 1  & $\phantom{-}\frac{5}{16}$  & $-\frac{15}{16}$  & $\phantom{-}\frac{15}{16}$  & $-\frac{5}{16}$  & $\phantom{-}\times$  & $\phantom{-}\times$  \\[0.3mm]
 & $|\sin(\psi)|^{2}$  & $-\frac{15}{16}$  & $\phantom{-}\frac{255}{16}$  & $-\frac{297}{16}$  & $\phantom{-}\frac{105}{16}$  & $\phantom{-}\times$  & $\phantom{-}\times$  \\[0.3mm]
 & $|\sin(\psi)|^{4}$  & $\phantom{-}\frac{15}{16}$  & $-\frac{465}{16}$  & $\phantom{-}\frac{829}{16}$  & $-\frac{315}{16}$  & $\phantom{-}\times$  & $\phantom{-}\times$  \\[0.3mm]
 & $|\sin(\psi)|^{6}$  & $-\frac{5}{16}$  & $\phantom{-}\frac{225}{16}$  & $-\frac{547}{16}$  & $\phantom{-}\frac{231}{16}$  & $\phantom{-}\times$  & $\phantom{-}\times$  \\[0.4mm]
\noalign{\vskip 1.5mm} & & $\phantom{{-}}{1}$ & $\phantom{{-}}\epsilon^{2}$ & $\phantom{{-}}\epsilon^{4}$ & $\phantom{{-}}\epsilon^{6}$ & $\phantom{{-}}\epsilon^{8}$ & $\phantom{{-}}\epsilon^{10}$ \\ \hline\rule{0pt}{\dimexpr\ht\strutbox+0.8mm\relax}
 $\sigma^{8}$ & $|\sin(\psi)|^{1}$  & $\phantom{-}\times$  & $-4$  & $\phantom{-}9$  & $-\frac{15}{2}$  & $\phantom{-}\frac{35}{16}$  & $\phantom{-}\times$  \\[0.3mm]
 & $|\sin(\psi)|^{3}$  & $\phantom{-}\times$  & $\phantom{-}12$  & $-69$  & $\phantom{-}\frac{129}{2}$  & $-\frac{315}{16}$  & $\phantom{-}\times$  \\[0.3mm]
 & $|\sin(\psi)|^{5}$  & $\phantom{-}\times$  & $-12$  & $\phantom{-}111$  & $-\frac{269}{2}$  & $\phantom{-}\frac{693}{16}$  & $\phantom{-}\times$  \\[0.3mm]
 & $|\sin(\psi)|^{7}$  & $\phantom{-}\times$  & $\phantom{-}4$  & $-51$  & $\phantom{-}\frac{155}{2}$  & $-\frac{429}{16}$  & $\phantom{-}\times$  \\[0.4mm]
\noalign{\vskip 1.5mm} & & $\phantom{{-}}\epsilon^{1}$ & $\phantom{{-}}\epsilon^{3}$ & $\phantom{{-}}\epsilon^{5}$ & $\phantom{{-}}\epsilon^{7}$ & $\phantom{{-}}\epsilon^{9}$ & $\phantom{{-}}\epsilon^{11}$ \\ \hline\rule{0pt}{\dimexpr\ht\strutbox+0.8mm\relax}
 $\sigma^{9}$ & 1  & $\phantom{-}\frac{35}{128}$  & $-\frac{35}{32}$  & $\phantom{-}\frac{105}{64}$  & $-\frac{35}{32}$  & $\phantom{-}\frac{35}{128}$  & $\phantom{-}\times$  \\[0.3mm]
 & $|\sin(\psi)|^{2}$  & $-\frac{35}{32}$  & $\phantom{-}\frac{245}{8}$  & $-\frac{861}{16}$  & $\phantom{-}\frac{305}{8}$  & $-\frac{315}{32}$  & $\phantom{-}\times$  \\[0.3mm]
 & $|\sin(\psi)|^{4}$  & $\phantom{-}\frac{105}{64}$  & $-\frac{1365}{16}$  & $\phantom{-}\frac{8379}{32}$  & $-\frac{3237}{16}$  & $\phantom{-}\frac{3465}{64}$  & $\phantom{-}\times$  \\[0.3mm]
 & $|\sin(\psi)|^{6}$  & $-\frac{35}{32}$  & $\phantom{-}\frac{665}{8}$  & $-\frac{5901}{16}$  & $\phantom{-}\frac{2693}{8}$  & $-\frac{3003}{32}$  & $\phantom{-}\times$  \\[0.3mm]
 & $|\sin(\psi)|^{8}$  & $\phantom{-}\frac{35}{128}$  & $-\frac{875}{32}$  & $\phantom{-}\frac{10185}{64}$  & $-\frac{5483}{32}$  & $\phantom{-}\frac{6435}{128}$  & $\phantom{-}\times$  \\[0.4mm]
\noalign{\vskip 1.5mm} & & $\phantom{{-}}{1}$ & $\phantom{{-}}\epsilon^{2}$ & $\phantom{{-}}\epsilon^{4}$ & $\phantom{{-}}\epsilon^{6}$ & $\phantom{{-}}\epsilon^{8}$ & $\phantom{{-}}\epsilon^{10}$ \\ \hline\rule{0pt}{\dimexpr\ht\strutbox+0.8mm\relax}
 $\sigma^{10}$ & $|\sin(\psi)|^{1}$  & $\phantom{-}\times$  & $-5$  & $\phantom{-}15$  & $-\frac{75}{4}$  & $\phantom{-}\frac{175}{16}$  & $-\frac{315}{128}$  \\[0.3mm]
 & $|\sin(\psi)|^{3}$  & $\phantom{-}\times$  & $\phantom{-}20$  & $-180$  & $\phantom{-}255$  & $-\frac{625}{4}$  & $\phantom{-}\frac{1155}{32}$  \\[0.3mm]
 & $|\sin(\psi)|^{5}$  & $\phantom{-}\times$  & $-30$  & $\phantom{-}450$  & $-\frac{1809}{2}$  & $\phantom{-}\frac{4737}{8}$  & $-\frac{9009}{64}$  \\[0.3mm]
 & $|\sin(\psi)|^{7}$  & $\phantom{-}\times$  & $\phantom{-}20$  & $-420$  & $\phantom{-}1119$  & $-\frac{3277}{4}$  & $\phantom{-}\frac{6435}{32}$  \\[0.3mm]
 & $|\sin(\psi)|^{9}$  & $\phantom{-}\times$  & $-5$  & $\phantom{-}135$  & $-\frac{1803}{4}$  & $\phantom{-}\frac{5959}{16}$  & $-\frac{12155}{128}$  \\[0.4mm]
\noalign{\vskip 1.5mm} & & $\phantom{{-}}\epsilon^{1}$ & $\phantom{{-}}\epsilon^{3}$ & $\phantom{{-}}\epsilon^{5}$ & $\phantom{{-}}\epsilon^{7}$ & $\phantom{{-}}\epsilon^{9}$ & $\phantom{{-}}\epsilon^{11}$ \\ \hline\rule{0pt}{\dimexpr\ht\strutbox+0.8mm\relax}
 $\sigma^{11}$ & 1  & $\phantom{-}\frac{63}{256}$  & $-\frac{315}{256}$  & $\phantom{-}\frac{315}{128}$  & $-\frac{315}{128}$  & $\phantom{-}\frac{315}{256}$  & $-\frac{63}{256}$  \\[0.3mm]
 & $|\sin(\psi)|^{2}$  & $-\frac{315}{256}$  & $\phantom{-}\frac{13125}{256}$  & $-\frac{15435}{128}$  & $\phantom{-}\frac{16425}{128}$  & $-\frac{16975}{256}$  & $\phantom{-}\frac{3465}{256}$  \\[0.3mm]
 & $|\sin(\psi)|^{4}$  & $\phantom{-}\frac{315}{128}$  & $-\frac{24675}{128}$  & $\phantom{-}\frac{57015}{64}$  & $-\frac{66915}{64}$  & $\phantom{-}\frac{71975}{128}$  & $-\frac{15015}{128}$  \\[0.3mm]
 & $|\sin(\psi)|^{6}$  & $-\frac{315}{128}$  & $\phantom{-}\frac{36225}{128}$  & $-\frac{126315}{64}$  & $\phantom{-}\frac{186309}{64}$  & $-\frac{210927}{128}$  & $\phantom{-}\frac{45045}{128}$  \\[0.3mm]
 & $|\sin(\psi)|^{8}$  & $\phantom{-}\frac{315}{256}$  & $-\frac{47775}{256}$  & $\phantom{-}\frac{223335}{128}$  & $-\frac{407871}{128}$  & $\phantom{-}\frac{499303}{256}$  & $-\frac{109395}{256}$  \\[0.3mm]
 & $|\sin(\psi)|^{10}$  & $-\frac{63}{256}$  & $\phantom{-}\frac{11865}{256}$  & $-\frac{69615}{128}$  & $\phantom{-}\frac{152973}{128}$  & $-\frac{204739}{256}$  & $\phantom{-}\frac{46189}{256}$  \\[0.4mm]
\end{tabular}
\captionsetup{width=0.87\linewidth}
\caption{Tabulated coefficient values for series expansion of $\cos(\phi)/\cos(\psi)$ as in Theorem 1. Top rows and left-most columns show series and polynomial variables and the table body contains the corresponding values of $d^{\cos}_{k,l,n}$ for indicated indices. Values marked with $\times$ are not part of the expansion.}
\label{tab:d_cos_phi_evo}
\end{table}

\end{document}